%%%%%%%%%%%%%%%%%%%%%%%%%%%%%%%%%%%%
%
% Authors: Calixto, Lucas;  Hoyt,Crystal
% Title: TBD
%
%%%%%%%%%%%%%%%%%%%%%%%%%%%%%%%%%%%%
\documentclass[11pt, reqno]{amsart}

\usepackage{amssymb}
\usepackage[shortlabels]{enumitem}
\usepackage{etoolbox}
\usepackage{eucal}
\usepackage[margin=1in]{geometry}
\usepackage[colorlinks=true, allcolors=blue]{hyperref}
\usepackage[dvipsnames]{xcolor}
\usepackage[all]{xy}

\linespread{1.1}
\setlength{\parskip}{1ex}
\clubpenalty10000
\widowpenalty10000
\setcounter{tocdepth}{1}
\numberwithin{equation}{section}
\allowdisplaybreaks

%\newcommand{\arxiv}[1]{\href{http://arxiv.org/abs/#1}{\tt arXiv:\nolinkurl{#1}}}

%%%%%%%%%%%%%%%%%%%%%%%%%%%%%%%%%%%
%
% Toggles for comments and details
%
%%%%%%%%%%%%%%%%%%%%%%%%%%%%%%%%%%%

\newtoggle{comments}
\newtoggle{details}
\newtoggle{detailsnote}

\toggletrue{comments}      % For comments
%\togglefalse{comments}     % For no comments

\toggletrue{details}       % For details
%\togglefalse{details}      % For no details

\toggletrue{detailsnote}   % For including note about details toggle
%\togglefalse{detailsnote} % For omitting note about details toggle

%%%%%%%%%%%%%%%%%%%%%%%%%%%%%%%%%%%
%
% Margin Settings
%
%%%%%%%%%%%%%%%%%%%%%%%%%%%%%%%%%%%

\leftmargin=0in
\topmargin=0pt
\headheight=0pt
\oddsidemargin=0in
\evensidemargin=0in
\textheight=8.75in
\textwidth=6.5in
\parindent=0.5cm
\headsep=0.25in
\widowpenalty=1000

%%%%%%%%%%%%%%%%%%%%%%%%%%%%%%%%%%%
%
% Math Definitions
%
%%%%%%%%%%%%%%%%%%%%%%%%%%%%%%%%%%%

\newcommand\C{\mathbb{C}}
\newcommand\Z{\mathbb{Z}}

\newcommand\N{\mathbb{N}}
\newcommand {\Zp} {\mathbb{Z}_{> 0}}

\newcommand\bbT{\mathbb{T}}
\newcommand\bbTsl{\mathbb{T}_{\mathfrak{sl}(\infty)}}
\newcommand\bbTgl{\mathbb{T}_{\mathfrak{gl}(\infty)}}

\newcommand\tbbTsl{\widetilde{\mathbb{T}}_{\mathfrak{sl}(\infty)}}
\newcommand\tbbTgl{\widetilde{\mathbb{T}}_{\mathfrak{gl}(\infty)}}

\newcommand\Mod{\operatorname{-mod}}

\newcommand\g{\mathfrak{g}}

\newcommand\fs{\mathfrak{s}}

\newcommand\h{\mathfrak{h}}
\newcommand\n{\mathfrak{n}}
\newcommand\fa{\mathfrak{a}}
\newcommand\fo{\mathfrak{o}}
\newcommand\fsp{\mathfrak{sp}}
\newcommand\fosp{\mathfrak{osp}}

\newcommand\fb{\mathfrak{b}}
\newcommand\fsl{\mathfrak{sl}}
\newcommand\fgl{\mathfrak{gl}}
\newcommand\fso{\mathfrak{so}}

\newcommand\ft{\mathfrak{t}}

\newcommand\fk{\mathfrak{k}}

\newcommand\cE{\mathcal{E}}

\newcommand\cT{\mathcal{T}}

\newcommand\bU{\mathbf{U}}

%%%%%%%%%%%%%%%%%%%%%%%%%%%%%%%%%%%
%
% Math Operators
%
%%%%%%%%%%%%%%%%%%%%%%%%%%%%%%%%%%%

\DeclareMathOperator{\im}{im} %Image of a map
\DeclareMathOperator{\Hom}{Hom}

\DeclareMathOperator{\Ext}{Ext}

\DeclareMathOperator{\End}{End}

\DeclareMathOperator{\Int}{Int}

\DeclareMathOperator{\Span}{Span}

\DeclareMathOperator{\Der}{Der}

\DeclareMathOperator{\Ann}{Ann}
\DeclareMathOperator{\Supp}{Supp} % Support

 % multiplicity
\DeclareMathOperator{\Ind}{Ind}
\DeclareMathOperator{\Coind}{Coind}

\DeclareMathOperator{\ad}{ad}

\DeclareMathOperator{\soc}{soc}

%%%%%%%%%%%%%%%%%%%%%%%%%%%%%%%%%%%
%
% Theorem Environments
%
%%%%%%%%%%%%%%%%%%%%%%%%%%%%%%%%%%%

\theoremstyle{plain}
\newtheorem{theo}{Theorem}[section]
\newtheorem*{theo*}{Theorem}
\newtheorem{prop}[theo]{Proposition}
\newtheorem{lem}[theo]{Lemma}
\newtheorem{cor}[theo]{Corollary}

\theoremstyle{definition}
\newtheorem{defin}[theo]{Definition}
\newtheorem*{rem*}{Remark}
\newtheorem{rem}[theo]{Remark}

\newtheorem{example}[theo]{Example}

\numberwithin{equation}{section}

\allowdisplaybreaks

%%%%%%%%%%%%%%%%%%%%%%%%%%%%%%%%%%%
%
% Commands for comments and details
%
%%%%%%%%%%%%%%%%%%%%%%%%%%%%%%%%%%%

%\toggletrue{comments}   % To include comments

%\toggletrue{details}   % To include additional details

%\toggletrue{prelimnote}   % To include preliminary version note

\toggletrue{detailsnote}   % For including note about details toggle

\iftoggle{comments}{%
  \newcommand{\comments}[1]{
    \begin{center}
      \parbox{6.5 in}{
        \color{red}
          {\footnotesize \textbf{Comments:} #1}
        \color{black}}
    \end{center}}
}{%
  \newcommand{\comments}[1]{}
}

\iftoggle{details}{%
  \newcommand{\details}[1]{
      \ \\
      \color{OliveGreen}
        \begin{footnotesize}
          \textbf{Details:} #1
        \end{footnotesize}
      \color{black}
      \\
  }
}{%
  \newcommand{\details}[1]{}
}

%%%%%%%%%%%%%%%%%%%%%%%%%%%%%%%%%%%
%
\begin{document}
%
%%%%%%%%%%%%%%%%%%%%%%%%%%%%%%%%%%%

\title[Representations of $W(\infty)$]{Representations of the Lie superalgebra of superderivations of the Grassmann algebra at infinity}
\author{Lucas Calixto}
\address{L.~Calixto: Department of Mathematics, Federal university of Minas Gerais, Brazil}
\email{lhcalixto@ufmg.br}

\author{Crystal Hoyt}
\address{C.~Hoyt: Department of Mathematics, Bar-Ilan University, Ramat Gan 52900, Israel}
\email{math.crystal@gmail.com}

%\thanks{Insert later}

%\thanks{Insert later}

\subjclass[2010]{17B65, 17B10, 17B55}

%\date{\today}

\keywords{locally finite Lie algebras, algebras of derivations, integrable modules, weight modules}

\begin{abstract}
The Lie superalgebra $W(\infty)$ is defined to be the direct limit of the simple finite-dimensional Cartan type Lie superalgebras $W(n)$ as $n$ goes to infinity, where $W(n)$  denotes the Lie superalgebra of superderivations of the Grassmann algebra $\Lambda(n)$. The zeroth component of $W(\infty)$ in its natural $\mathbb{Z}$-grading is isomorphic to $\fgl(\infty)$.

In this paper, we initiate the study of the representation theory of $W(\infty)$. We study $\Z$-graded $W(\infty)$-modules, and we introduce a category $\bbT_W$ that is closely related to the Koszul category $\bbTsl$ of tensor $\fsl(\infty)$-modules introduced and studied by Dan-Cohen, Serganova and Penkov. We classify the simple objects of $\bbT_W$ (up to isomorphism). 
We prove that each simple module in $\bbT_W$ is isomorphic to the unique simple quotient of a module induced from a  simple module  in $\bbTgl$, and vice versa, which is analogous to the case for $W(n)$ studied by Serganova. As a corollary,  we find that all simple modules in $\bbT_W$ are highest weight modules with respect to a certain Borel subalgebra. We realize each simple module from $\bbT_W$ as a module of tensor fields, generalizing work of Bernstein and Leites for $W(n)$. We prove that the category $\bbT_W$ has enough injective objects, and for each simple module, we provide an explicit injective module in $\bbT_W$ that contains it.
\end{abstract}

\maketitle \thispagestyle{empty}

% \tableofcontents

%%%%%%%%%%%%%%%%
%
\section*{Introduction}
%
%%%%%%%%%%%%%%%%

The study of Lie algebras that can be obtained as a direct limit of classical finite-dimensional Lie algebras has been an active field of research over the last 20 years \cite{PH22}. These Lie algebras are called locally finite Lie algebras.  The program of understanding the structure and representation theory of such Lie algebras  has been led by Ivan Penkov. The classical locally simple Lie algebras $\fsl(\infty), \fo(\infty), \fsp(\infty)$  are precisely the Lie algebras that appear in Baranov's classification of finitary locally simple Lie algebras over $\mathbb C$ \cite{Ba99}.  These Lie algebras admit several natural categories of modules which turn out to have very rich and useful structure theory \cite{PS11, DPS16,  HPS19, PS19, CoP19}. For example, the category $\bbTsl$ of tensor modules over $\fsl(\infty)$ was introduced and studied  in \cite{DPS16}, where it was shown to be a self-dual Koszul category \cite{DPS16}. The category $\bbTsl$ is used  in \cite{FPS16} to categorify the Boson-Fermion Correspondence. In \cite{HPS19}, a category of integrable $\mathfrak{sl}(\infty)$-modules is used to describe the induced $\mathfrak{sl}(\infty)$-module structure on the complexified reduced Grothendieck groups arising from Brundan's categorical $\mathfrak{sl}(\infty)$-action on the category $\mathcal F_{m|n}^{\mathbb Z}$ of finite-dimensional integral $\mathfrak{gl}(m|n)$-modules and on the BGG category $\mathcal O_{m|n}^{\mathbb Z}$ (see \cite{Br03}).

It is natural to consider the super analogs of locally finite Lie algebras and their categories of modules. The categories of tensor modules over $\fgl(\infty|\infty)$ and $\fosp(\infty|\infty)$ are described in \cite{S14}, and the category of integrable bounded weight modules over classical locally finite Lie superalgebras is studied in \cite{CP22}. Although the study of the representation theory of locally finite Lie superalgebras is still in its infancy, it has already proven to be very useful. In \cite{S14}, Serganova found a natural way to explain the equivalence between the categories of tensor modules over $\fo(\infty)$ and $\fsp(\infty)$ by means of the category of tensor modules over $\fosp(\infty|\infty)$.

Moving to non-classical Lie superalgebras, it is safe to say that the Lie superalgebra $W(n)$ of superderivations of the Grassmann algebra $\Lambda(n)$ is the most fundamental one. This Lie superalgebra appears in Kac's classification list of simple finite-dimensional Lie superalgebras \cite{Kac77}. Bernstein and Leites described irreducible finite-dimensional $W(n)$-modules and realized these representations as modules of tensor fields in \cite{BL81,BL83}.  Serganova studied the category of $\Z$-graded $W(n)$-modules and described the structure of induced modules in \cite{S05}.

In this paper, we define the Lie superalgebra $W(\infty)$ to be the direct limit of the Lie superalgebras $W(n)$  as $n$  goes to infinity, with respect to the natural embedding of $W(n)$ into $W(n+1)$. So by definition, $W(\infty)$ is a locally finite Lie superalgebra that is locally simple, and hence simple. The Lie superalgebra $W(\infty)$  has a natural $\mathbb Z$-grading such that the zeroth graded component is isomorphic to $\fgl(\infty)$. The Lie algebra $\fgl(\infty)$ has a corank $1$ simple ideal $\fsl(\infty)$. 

In the current paper we initiate the study of the representation theory of the Lie superalgebra $W(\infty)$. Our main goal is to introduce and investigate categories $\bbT_W$, $\bbT_W^{\geq}$ and $\bbT_W^{\leq}$ of  $W(\infty)$-modules that are analogs of the category $\bbT_{\fsl(\infty)}$. The objects in these categories are $\Z$-graded $W(\infty)$-modules that satisfy some nice properties that resemble those that finite-dimensional modules over $W(n)$ have. We prove that the simple objects of $\bbT_W$ and $\bbT_W^{\leq}$ coincide and that there is a one-to-one correspondence between such simple modules and simple tensor modules of $\fgl(\infty)$ (see Theorem~\ref{theo: simples Lminus} and Corollary~\ref{cor: coincide}). To obtain this bijection we rely on the fact that every simple object in $\bbT_W$ is parabolically induced from a simple object in $\bbT_{\fgl(\infty)}$ (Proposition~\ref{prop:inv.non-zero}). This is analogous to the finite-dimensional situation where every simple finite-dimensional $W(n)$-module can be obtained as a quotient of a parabolically induced simple finite-dimensional $\fgl(n)$-module \cite{S05}. As a corollary, we obtain that simple modules in $\bbT_W$ are highest weight modules with respect to an appropriately chosen Borel subalgebra and that such modules can be parameterized by pairs of partitions. 

In Section~\ref{sex:tensor_modules} we define modules of tensor fields for $W(\infty)$ and we prove that every simple module in $\bbT_W$ is isomorphic to a submodule of a module of tensor fields (Theorem~\ref{thm:simples_are_tensor_modules}). This result gives a realization of all simple modules of $\bbT_W$ and it is analogous to \cite[Theorem~3]{BL81}. In Section~\ref{Section 7} we construct a functor $\Gamma$ from the category $\g\Mod$ of all $\g$-modules to the category $\bbT_W$, which allows us to prove that the category $\bbT_W$ has enough injectives (Proposition~\ref{prop:enough_inj}). We show that combining $\Gamma$ with a certain coinduction functor provides a way to construct injective objects of $\bbT_W$ from injective objects in the category of integrable $\fgl(\infty)$-modules (Corollary~\ref{cor:constructing_inj}). Finally, we use the functor $\Gamma$ along with  Theorem~\ref{thm:simples_are_tensor_modules} to explicitly construct, for every simple module in $\bbT_W$, an injective module of $\bbT_W$   which contains it (Theorem~\ref{prop:injective_for_simple}).

This new category $\bbT_W$ that we introduce in this paper opens up a number of directions for further research, and our results indicate that $\bbT_W$ may be as rich as $\bbT_{\fsl(\infty)}$. We conclude this section by listing some problems to be addressed in future works.
\begin{enumerate}
\item Is $\bbT_W$ Koszul? What is the block decomposition of $\bbT_W$? In order to answer these questions we first need to understand the extension groups between simple modules of this category, which is known to be a very hard problem even in the finite-dimensional setting.
\item Is $\bbT_W$ a highest weight category in the sense of \cite{CPS88}?
\item The bijection we prove between simple objects of $\bbT_W$ and of $\bbT_{\fgl(\infty)}$ raises the question as to whether there is an equivalence between these two categories, as was the case in \cite{S14}.
\end{enumerate}
\medskip

%\paragraph{\textbf{Notation}} Insert later

\iftoggle{detailsnote}{
\medskip

%\paragraph{\textbf{Note on the arXiv version}} For the interested reader, this arXiv version of this paper includes hidden details of some straightforward computations and arguments that are omitted in the pdf file.  These details can be displayed by switching the \texttt{details} toggle to true in the tex file and recompiling.
}{}

\medskip

\paragraph{\textbf{Acknowledgements}} We would like to thank Ivan Penkov and Vera Serganova for helpful discussions. The first author was supported by Fapemig Grant (APQ-02768-21) and by CNPq Grant (402449/2021-5). The second author was partially supported by ISF Grant 1221/17 and by ISF Grant 1957/21.

%%%%%%%%%%%%%%%%
%
\section{The Lie algebras $\fgl(\infty)$ and  $\fsl(\infty)$}\label{sec:gl}
%
%%%%%%%%%%%%%%%%

\subsection*{Notation:} The ground field is always assumed to be $\C$. We set $I:=\{1,2,3,\ldots\}$. If $C$ is a category, then we write $M\in C$ whenever $M$ is an object of $C$. 
If $V$ is a $\Z_2$-graded vector space, then $\bar{v}$ denotes the parity of a homogeneous element $v\in V$. By convention, if we write $\bar{v}$ then we assume that $v$ is homogeneous. For a Lie superalgebra $\frak{a}$, let $\bU(\frak{a})$ denote its universal enveloping superalgebra.

\subsection{Construction and root system of  $\fgl(\infty)$ and  $\fsl(\infty)$}
We begin with defining the Lie algebras $\fgl(\infty)$ and  $\fsl(\infty)$ (see e.g. \cite{PH22}).
Let $V, V_*$ be countable-dimensional vector spaces over $\C$. Fix a non-degenerate pairing $\langle\cdot , \cdot \rangle:V\otimes V_*\to \C$ and  bases $\{\xi_i\}_{i\in I}$ for $V$, $\{\partial_i\}_{i\in I}$ for $V_*$ such that $\langle \xi_i,\partial_j\rangle=\delta_{i,j}$ for all $i,j\in I$. Then $\fgl(\infty):=V\otimes V_*$ has a Lie algebra structure such that
	\[
[\xi_i\otimes \partial_j,\xi_k\otimes \partial_\ell]=\langle \xi_k,\partial_j\rangle\xi_i\otimes \partial_\ell -\langle \xi_i,\partial_\ell\rangle \xi_k\otimes \partial_j.
	\]
Moreover, $V$ and $V_*$ are the natural and conatural $\fgl(\infty)$-modules, respectively, and they satisfy
	\[
(\xi_i\otimes \partial_j)\cdot \xi = \langle\xi, \partial_j\rangle\xi_i,\quad (\xi_i\otimes \partial_j)\cdot \partial = -\langle\xi_i, \partial\rangle\partial_j,\text{ for all }\xi,\xi_i\in V,\  \partial,\partial_j\in V_*.
	\]
In what follows, we write $\xi_i\partial_j$ for the element $\xi_i\otimes\partial_j$.

We can identify $\fgl(\infty)$ with the
space of infinite matrices $\left(a_{ij}\right)_{i,j\in I}$
with finitely many nonzero entries, where the vector $\xi_i\partial_j$ 
corresponds to the matrix $E_{i,j}$ with $1$ in the $i,j$-position and zeros elsewhere. 
Under this identification, $\langle\cdot,\cdot\rangle$ is the trace map on $\mathfrak{gl}(\infty)$.

The Lie algebra $\fsl(\infty)$ is defined to be the kernel of $\langle\cdot,\cdot\rangle$. Once can also realize $\fsl(\infty)$ as a direct limit  of finite-dimensional Lie algebras $\underrightarrow{\lim}\  \fsl(n)$. So the Lie algebra $ \fsl(\infty)$  is locally simple, and hence simple.
Note however that the exact sequence
$$
0\to \fsl(\infty)\to \fgl(\infty)\to\mathbb{C}\to 0
$$
does not split, since the center of  $\fgl(\infty)$ is trivial.

We realize $\fgl(\infty)$ as the direct limit $\underrightarrow{\lim}\  \fgl(n)$ as follows. As a vector space, $\fgl(n)=V_n\otimes V^*_n$, where  $\{\xi_1,\ldots,\xi_n\}$ and $\{\partial_1,\ldots,\partial_n\}$ are fixed dual bases for  $V_n$ and $V^*_n$, respectively. Then we have obvious embeddings $V_n\hookrightarrow V_{n+1}$, $V^*_n\hookrightarrow V^*_{n+1}$, $\fgl(n)\hookrightarrow\fgl(n+1)$ given on  basis vectors by $\xi_i\to\xi_i$ and $\partial_j\to \partial_j$.  Then $\fgl(\infty)=\cup_{n\in \Z_{\geq 2}} \fgl(n)$, and we identify $\fgl(n)$ with its image under the defining map.

We fix the Cartan subalgebras $\h_{\fgl(n)}\subseteq \fgl(n)$, $\h_\fgl\subseteq \fgl(\infty)$ and $\h_\fsl\subseteq \fsl(\infty)$, which consist of diagonal matrices. Then $\h_{\fgl(n)}=\h_\fgl\cap\fgl(n)$ and $\h_\fsl=\h_\fgl\cap\fsl(\infty)$. If we choose the standard basis $\{\varepsilon_1,\varepsilon_2,\varepsilon_3,\ldots\}$ of $\h_*$, then the root system of $\fa=\fgl(\infty)$, $\fsl(\infty)$ is $\Delta=\{\varepsilon_i-\varepsilon_j \mid i\neq j \in I\}$, where the elements $\xi_i\partial_j = E_{i,j}$ are root vectors for $\varepsilon_i-\varepsilon_j$.

We let $\fb(\prec)^0$ be  the Borel subalgebra  of $\fa=\fgl(\infty)$, $\fsl(\infty)$ corresponding to the following linear order $\prec$ on our index set $I$:
\begin{equation}\label{prec def}
1\prec 3\prec 5 \prec \cdots \prec 6\prec 4\prec 2,
\end{equation}
that is, $\fb(\prec)^0=\h\oplus\n^0$ where $\n^0=\oplus_{\alpha\in\Delta^+}\fa_{\alpha}$ and
\begin{equation}\label{pos roots b0}
\Delta^+:=\{\varepsilon_i-\varepsilon_j\mid i<j \text{ odd}\}\cup\{\varepsilon_i-\varepsilon_j\mid j<i \text{ even}\}\cup\{\varepsilon_i-\varepsilon_j\mid i \text{ odd, } j \text{ even}\}.
\end{equation}
We let $\fb(\prec)_n^0$ be the Borel subalgebra  of $\fgl(n)$ defined by  $\fb(\prec)^0\cap \fgl(n)$, using our fixed embedding  $\fgl(n)\hookrightarrow\fgl(\infty)$.  
With respect to the  Borel subalgebra  $\fb(\prec)^0$, both the natural module $V$ and the
conatural module $V_*$ are highest weight modules, and they have highest weights $\varepsilon_1$ and $-\varepsilon_2$, respectively.

%%%

\subsection{Tensor modules over $\fgl(\infty)$ and $\fsl(\infty)$}\label{mod gs} Let $\fa=\fgl(\infty)$, $\fsl(\infty)$. An $\fa$-module is called a {\em tensor module} if it is isomorphic to a subquotient of a finite direct sum of modules of the form $V^{\otimes p_i}\otimes V_{*}^{\otimes q_i}$ for some $p_i,q_i \in\Z_{\geq 0}$ \cite{DPS16}. The module $ V^{\otimes p}\otimes  V_{*}^{\otimes q}$  is not semisimple if $p,q>0$.  The simple subquotients of the modules $V^{\otimes p}\otimes V_*^{\otimes  q}$, $p,q\in\Z_{\geq 0}$   can be parameterized by two Young diagrams $\lambda,\mu$,   and we will denote them  $ V_{\lambda,\mu}$ (see Theorem~\ref{th:PSt}). 

By Schur--Weyl duality,  we have the following isomorphism of $\fgl(\infty)\times (S_p \times S_q)$-modules:

$$V^{\otimes p}\otimes V_*^{\otimes  q}\simeq\bigoplus_{|\lambda|=p}\ \bigoplus_{|\mu|=q}
(\mathbb{S}_{\lambda}( V)\otimes \mathbb{S}_{\mu}( V_*))\otimes(Y_\lambda\boxtimes Y_\mu),$$
where  $\mathbb{S}_{\lambda}$ denotes the Schur functor corresponding to the Young diagram  $\lambda$, the size of $\lambda$ is  $|\lambda|:=\sum \lambda_i$, and $Y_\lambda\boxtimes Y_\mu$ is the outer tensor product of irreducible $S_p$- and $S_q$-modules.

Then $\tilde{V}_{\lambda,\mu}:=(\mathbb{S}_{\lambda}( V)\otimes \mathbb{S}_{\mu}( V_*))$ are indecomposable $\fa$-modules, and their socle filtration was described by Penkov and Styrkas in \cite{PSt11}. We recall that the {\em socle} of a module $M$, denoted $\soc M$,
is the largest semisimple submodule of $M$, and that the {\em socle filtration} of $M$ is defined inductively by $\soc^0 M:=\soc M$ and $\soc^i M:=p_i^{-1}(\soc (M/(\soc^{i-1}M)))$, where $p_i:M\to
M/(\soc^{i-1} M)$ is the natural projection. The layers of the socle filtration are denoted by $\overline{\soc}^i M :=\soc^i M /\soc^{i-1} M $.

The following theorem is from \cite[Theorem 2.3]{PSt11}.

\begin{theo}[Penkov, Styrkas]\label{th:PSt}  Let $\fa=\mathfrak{gl}(\infty)$,
$\mathfrak{sl}(\infty)$. 
The socle filtration of $\tilde{V}_{\lambda,\mu}$ has the following layers:
\begin{equation}\label{ch7:socle filtration}
\overline{\soc}^{k}\tilde{V}_{\lambda,\mu}=\bigoplus_{|\gamma|=k}
N^{\lambda}_{\gamma,\lambda'}N^{\mu}_{\gamma,\mu'}V_{\lambda',\mu'},
\end{equation}
where $N^{\lambda}_{\gamma,\lambda'}$ $N^{\lambda}_{\gamma,\lambda'}$  denotes the Littlewood--Richardson coefficients determined by the relation $s_\lambda \, s_\mu =
\sum_\nu N_{\lambda,\mu}^\nu \, s_\nu$ for the Schur symmetric polynomials $s_\lambda$, $s_\mu$, $s_\nu$.
\end{theo}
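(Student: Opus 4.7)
The plan is to establish the theorem in three stages: first determine the composition factors of $\tilde{V}_{\lambda,\mu}$ via a stable limit of Schur--Weyl decompositions, then construct an explicit filtration using iterated contraction maps, and finally identify this candidate filtration with the socle filtration.

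For the composition factors, I would work at the finite level. For every $n \gg |\lambda| + |\mu|$, the $\fgl(n)$-module $\mathbb{S}_\lambda(V_n)\otimes\mathbb{S}_\mu(V_n^*)$ decomposes semisimply via the classical Littlewood--Richardson rule for mixed tensors, with
\[
[\mathbb{S}_\lambda(V_n)\otimes\mathbb{S}_\mu(V_n^*):V_n^{\lambda',\mu'}]=\sum_\gamma N^\lambda_{\gamma,\lambda'}N^\mu_{\gamma,\mu'}.
\]
Since $\tilde{V}_{\lambda,\mu}$ is the directed union of these $\fgl(n)$-submodules, and since the stable simple module $V_{\lambda',\mu'}$ restricts (for $n \gg 0$) to $V_n^{\lambda',\mu'}$, a standard weight-space argument identifies the multiset of composition factors of $\tilde{V}_{\lambda,\mu}$ with the same formula. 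This pins down what the right-hand side of the filtration formula must look like as a character.

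Next, I would construct the filtration itself. For each pair $(\lambda',\mu')$ appearing in the formula with $|\gamma| = k$, the pairing $\langle\cdot,\cdot\rangle : V\otimes V_*\to\C$ induces, after inserting the appropriate Young symmetrizers on $|\gamma|$ factors of $V$ and of $V_*$, natural $\fa$-module homomorphisms
\[
\Phi_{\lambda',\mu',\gamma} : \mathbb{S}_\lambda(V)\otimes\mathbb{S}_\mu(V_*) \longrightarrow \mathbb{S}_{\lambda'}(V)\otimes\mathbb{S}_{\mu'}(V_*).
\]
Let $F^k$ denote the intersection over all $\Phi_{\lambda',\mu',\gamma}$ with $|\gamma|\geq k+1$ of their kernels. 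Then $F^0 \subseteq F^1 \subseteq \cdots$ is a filtration by $\fa$-submodules, and by construction $F^0 = \bigcap_\gamma \ker\Phi_{\bullet,\bullet,\gamma}$, which a direct calculation (using that iterated contractions vanish only on ``trace-free'' tensors) identifies with $V_{\lambda,\mu}$, so $F^0$ is simple. Moreover, restricting to any $\fgl(n)$ shows that each $F^k/F^{k-1}$ injects into $\bigoplus_{|\gamma|=k} N^\lambda_{\gamma,\lambda'}N^\mu_{\gamma,\mu'} \,V_{\lambda',\mu'}$, and matching characters with Stage~1 forces equality.

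The main obstacle is showing $F^k$ coincides with $\soc^k \tilde{V}_{\lambda,\mu}$, i.e., that the filtration above is actually the socle filtration and not a refinement of it. To prove this I would argue by induction on $k$. Given that $F^{k-1} = \soc^{k-1}$, I need to show that any simple submodule $V_{\lambda'',\mu''}$ of $\tilde{V}_{\lambda,\mu}/F^{k-1}$ satisfies $|\gamma''| = k$. Assuming $|\gamma''| > k$, a highest weight vector with respect to $\fb(\prec)^0$ generating the embedding can be constructed explicitly inside a Young-symmetrized piece of the tensor space, and applied to a suitable $\Phi_{\lambda'',\mu'',\gamma''}$ with $|\gamma''| > k$ it must survive; thus it gives a nonzero map into a lower layer, contradicting that it lies in the quotient by $F^{k-1}$. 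Equivalently, one verifies that the contraction maps $\Phi_{\bullet,\bullet,\gamma}$ with $|\gamma| = k$ annihilate $\soc^{k-1}$ yet inject $\soc^k / \soc^{k-1}$ into a semisimple module, which forces $F^k \supseteq \soc^k$, the reverse inclusion being automatic since $F^k / F^{k-1}$ is already semisimple.
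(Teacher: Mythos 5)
This statement is quoted from the literature: in the paper it is attributed to Penkov and Styrkas and cited as \cite[Theorem 2.3]{PSt11}, with no proof given. So there is no in-paper argument to compare against; your proposal has to stand on its own as a proof of the cited theorem. Its overall shape (finite-rank Littlewood--Richardson decompositions, contraction maps induced by the pairing $\langle\cdot,\cdot\rangle$, the filtration $F^k$ by joint kernels of deep contractions, and the identification $V_{\lambda,\mu}=F^0$) is indeed the natural strategy and is close in spirit to what Penkov--Styrkas do, but as written it has a genuine gap at the decisive point.

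The gap is Stage 3, the inclusion $\soc^k\tilde V_{\lambda,\mu}\subseteq F^k$, equivalently that $\tilde V_{\lambda,\mu}/F^{k-1}$ has no simple submodule $S\cong V_{\lambda'',\mu''}$ with $|\gamma''|>k$. Your argument for this does not go through: an $m$-fold contraction $\Phi$ with $m=|\gamma''|>k$ kills $F^{k-1}$, hence descends to $\tilde V_{\lambda,\mu}/F^{k-1}$, and on a simple $S$ it is either zero or an embedding of $S$ into the socle of the target $\tilde V_{\lambda'',\mu''}$ --- which is perfectly consistent and yields no contradiction with ``lying in the quotient by $F^{k-1}$.'' What actually has to be proved is a non-splitting statement: from any vector $v$ whose image under some $(k+1)$-fold contraction is nonzero, the $\fa$-action must produce, inside $\bU(\fa)v$, vectors of strictly smaller depth that are nonzero modulo $F^{k-1}$, so that $\bU(\fa)v+F^{k-1}$ cannot be semisimple over $F^{k-1}$. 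This is where the specific infinite-rank phenomenon enters (the ``trace'' element $\sum_i\xi_i\partial_i$ does not exist in $V\otimes V_*$, and one partially undoes contractions by applying $E_{ij}$ with fresh indices); none of that is present in your sketch, and the phrase ``it must survive, thus it gives a nonzero map into a lower layer'' is not an argument. A secondary, smaller gap of the same nature sits in Stage 2: the map $F^k/F^{k-1}\hookrightarrow\bigoplus_{|\gamma|=k}\tilde V_{\lambda',\mu'}$ given by the $k$-fold contractions has image in the full (non-semisimple) targets, and you need to show the image is semisimple (e.g.\ by an induction on $|\lambda|+|\mu|$ landing it in the socles $V_{\lambda',\mu'}$) before the character count of Stage 1 can force the stated layer structure; also note that in Stage 1 the infinite-rank simple $V_{\lambda',\mu'}$ does not restrict to the finite-rank simple, so the passage from the finite-level multiplicities to composition factors of the direct limit itself needs an argument (stabilization of lengths), not just a weight-space remark.
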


In particular, the indecomposable $\fa$-module  $\tilde{V}_{\lambda,\mu}$ has an irreducible socle, denoted $V_{\lambda,\mu}$.
For any partitions  $\lambda,\mu$, the $\fa$-module  $V_{\lambda,\mu}$  is an irreducible highest weight module with respect to the Borel subalgebra $\fb(\prec)^0$ and has
highest weight 
\begin{equation}\label{eq hw}
\chi:=\sum_{i\in\Zp}\lambda_{i} \varepsilon_{2i-1} -\sum_{j\in\Zp}\mu_{j} \varepsilon_{2j} 
\end{equation}
(see \cite[Theorem 2.1]{PSt11}).

One corollary of Theorem~\ref{th:PSt} is that  each simple subquotient  of  $ V^{\otimes p}\otimes  V_{*}^{\otimes q}$ is isomorphic to a (simple) submodule of $ V^{\otimes p'}\otimes  V_{*}^{\otimes q'}$ for some $p',q'$.

%%%

\subsection{The categories $\bbTgl$ and $\bbTsl$}

A category of tensor modules over $\fsl(\infty)$ was introduced and studied in \cite{DPS16}. Here we recall this construction, and define a similar category for $\fgl(\infty)$.  First, we need the following definitions.

Let $\fa=\fgl(\infty)$, $\fsl(\infty)$. An $\fa$-module $M$ is called \emph{integrable} if for every $x\in \fa$ and $m\in M$, we have
	\[
\dim \Span_\C \{x^im\mid i\in  \Zp\}<\infty.
	\]
For each $n\in \Zp$,  let $\ft_n'$ be the subalgebra of $\fgl(\infty)$  generated by  root vectors as follows: 
\begin{equation}\label{eq tn}
\ft_n':=\left\langle\xi_{i}\partial_j =E_{i,j} \mid j,i \geq n \right\rangle.
\end{equation}
\begin{defin} 
We say that a subalgebra $\fk\subset \fa$ has \emph{finite corank} in $\fa$ if $(\ft_n'\cap\fa)\subset\fk$ for some $n\gg 0$.  
\end{defin}

\begin{rem} For $\fa=\fsl(\infty)$, a subalgebra $\fk\subset \fa$ has finite corank if and only if $\fk$ contains the commutator subalgebra of the centralizer of some finite-dimensional subalgebra of $\fa$.
\end{rem}

We say that an $\fa$--module $M$ satisfies the \emph{large annihilator condition (l.a.c)} if, for every $m\in M$, the subalgebra $\Ann_{\fa}(m)$ has finite corank.

\begin{defin}\label{def Tgl}
The category $\bbTgl$ is the full subcategory of $\fgl(\infty)$-mod consisting of modules $M$ that satisfy the following conditions:
\begin{enumerate}[(1)]
\item $M$ has finite length;
\item $M$ is integrable;
\item $M$ satisfies the l.a.c. 
\end{enumerate}
The category $\bbTsl$ is defined similarly, by replacing $\fgl(\infty)$ with $\fsl(\infty)$.
\end{defin}

Then $\bbTgl$ and  $\bbTsl$ are  abelian symmetric monoidal categories.
It was proved in \cite[Corollary 4.6]{DPS16} that the category  $\bbTsl$  consists of tensor modules.
Moreover, the modules $ V^{\otimes p}\otimes  V_{*}^{\otimes q}$, $p,q\in\mathbb{Z}_{\geq 0}$ are injective in the  category $\bbTsl$, and every indecomposable injective object of $\bbTsl$  is isomorphic to an indecomposable direct summand of  $ V^{\otimes p}\otimes  V_{*}^{\otimes q}$  for some $p,q\in\mathbb{Z}_{\geq 0}$  \cite{DPS16}. So by Theorem~\ref{th:PSt}, an indecomposable injective module in $\bbTsl$  is isomorphic to
 $\tilde{V}_{\lambda,\mu}:=(\mathbb{S}_{\lambda}( V)\otimes \mathbb{S}_{\mu}( V_*))$ for some $\lambda,\mu$.
Moreover, by \cite[Theorem 3.4]{DPS16}, every module $M$ in $\bbTsl$ is an $\h_\fsl$-weight module (since $\h_\fsl$ is a ``splitting'' Cartan subalgebra).

\begin{lem}\label{lem:restTglInTsl}
The restriction functor
    \[
\operatorname{R}_{\fsl}:\fgl(\infty)\Mod\to \fsl(\infty)\Mod,\quad \operatorname{R}_{\fsl}(M):=M|_{\fsl}
    \]
restricts to a well defined functor from $\bbTgl$ to $\bbTsl$.
\end{lem}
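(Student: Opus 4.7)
I must check that for every $M\in\bbTgl$, the three conditions defining $\bbTsl$ hold for $M|_{\fsl}$. Conditions (2) and (3) are essentially formal: integrability is defined elementwise, so restricting the action from $\fgl(\infty)$ to the subalgebra $\fsl(\infty)$ preserves it; and for the l.a.c., $\Ann_{\fsl(\infty)}(m)=\Ann_{\fgl(\infty)}(m)\cap\fsl(\infty)\supseteq\ft_n'\cap\fsl(\infty)$ whenever $\ft_n'\subseteq\Ann_{\fgl(\infty)}(m)$, and $\ft_n'\cap\fsl(\infty)$ has finite corank in $\fsl(\infty)$ by definition. The substantive condition is (1), and by length additivity in short exact sequences it suffices to show that every simple $L\in\bbTgl$ restricts to a finite-length $\fsl(\infty)$-module; in fact, I propose to show that $L|_{\fsl}$ is simple.

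The argument rests on two facts: (i) $L$ is an $\h_\fgl$-weight module, and (ii) restriction $\h_\fgl^*\to\h_\fsl^*$ is injective on the set of $\h_\fgl$-weights of $L$. For (i), pick $m\in L$ with $\ft_n'\cdot m=0$ (possible by l.a.c.); the subspace $\bU(\h_{\fgl(n-1)})\cdot m$ is finite-dimensional by integrability of the commuting operators $E_{11},\dots,E_{n-1,n-1}$, so it contains a simultaneous $\h_{\fgl(n-1)}$-eigenvector $v$. Since $\ft_n'$ commutes with $\h_{\fgl(n-1)}$, it still annihilates $v$, making $v$ a genuine $\h_\fgl$-weight vector; the sum of $\h_\fgl$-weight spaces of $L$ is then a nonzero $\fgl(\infty)$-submodule by the root decomposition, and hence equals $L$ by simplicity. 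For (ii), two weights of $L$ differ by a $\Z$-combination $\sum n_\beta\beta$ of roots $\beta=\varepsilon_i-\varepsilon_j$; if this combination vanishes on $\h_\fsl$, then $\sum n_\beta\beta(E_{kk})$ is constant in $k$, but for $k$ beyond the finite support of the sum this value is $0$, forcing the combination to vanish identically on $\h_\fgl$.

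Given (i) and (ii), any nonzero $\fsl(\infty)$-submodule $L'\subseteq L|_{\fsl}$ is $\h_\fsl$-stable, hence decomposes as $\bigoplus_\chi L'_\chi$ with each $L'_\chi$ contained in a single $\h_\fgl$-weight space $L_\chi$. The element $E_{11}$ acts on $L_\chi$ by the scalar $\chi(E_{11})$, so $E_{11}\cdot L'\subseteq L'$; since $\fgl(\infty)=\fsl(\infty)+\C\cdot E_{11}$, the submodule $L'$ is in fact $\fgl(\infty)$-invariant, and simplicity of $L$ forces $L'=L$. The principal obstacle is fact (i): establishing that modules in $\bbTgl$ are $\h_\fgl$-weight modules. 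The sketch above should handle this with a bit of care, essentially by transplanting \cite[Theorem 3.4]{DPS16} to the $\fgl(\infty)$-setting; everything else in the proof is elementary.
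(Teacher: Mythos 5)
Your proof is correct, but it takes a genuinely different and longer route than the paper's. The paper disposes of condition (1) in one stroke: given a simple $M\in\bbTgl$ and any nonzero $m\in M$, the l.a.c. supplies $n\gg 0$ with $E_{n,n}m=0$; since $\fgl(\infty)=\fsl(\infty)\oplus\C E_{n,n}$, the PBW theorem gives $M=\bU(\fgl(\infty))m=\bU(\fsl(\infty))m$, so every nonzero vector generates $M|_{\fsl}$ and no weight theory is needed. You instead first show that a simple object of $\bbTgl$ is an $\h_\fgl$-weight module (common eigenvector for the commuting, locally finite operators $E_{11},\dots,E_{n-1,n-1}$ on the finite-dimensional space $\bU(\h_{\fgl(n-1)})\cdot m$, combined with $\ft_n'\cdot m=0$ to handle $E_{kk}$ for $k\geq n$), then prove that restriction of weights to $\h_\fsl$ is injective on the weights of a simple module, and finally observe that any $\fsl(\infty)$-submodule is automatically $E_{11}$-stable. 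This is essentially the content of the paper's Lemmas~\ref{lem:sl-weight=gl-weight} and~\ref{lem:tensorModules=weightModules}, which the paper establishes \emph{after} (and partly by means of) the present lemma together with \cite[Theorem~3.4]{DPS16}; since you prove the weight property directly from integrability and the l.a.c., there is no circularity, and your argument is self-contained on that point. What you lose is brevity: the eigenvector step, the root-lattice argument, and the weight decomposition of the submodule are genuine extra work that the paper's $E_{n,n}m=0$ trick bypasses entirely. Your treatment of conditions (2) and (3), and the reduction of condition (1) to simple modules via length additivity, coincide with the paper's.
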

\begin{proof} Let $M\in\bbTgl$, and consider $\operatorname{R}_{\fsl}(M):=M'$ its restriction to $\fsl(\infty)$.
Trivially, $M'$ satisfies conditions~(2) and (3) of Definition~\ref{def Tgl}. To show that (1) holds, it suffices to prove that $M$ being simple implies that $M'$ is simple. For this, let $m\in M$ be a non-zero vector and apply the l.a.c. to choose $n\gg 0$ so that $E_{n,n}m=0$. Then, by the PBW Theorem, we have
	\[
M = \bU(\fgl(\infty))m = \bU(\fsl(\infty))m,
	\]
since $\fgl(\infty) =\fsl(\infty)\oplus\C E_{n,n}$. Thus $M'$ is simple, and the statement follows.
\end{proof}

\begin{lem}\label{lem:sl-weight=gl-weight}
Let $M$ be a $\fgl(\infty)$-module satisfying the l.a.c. and such that $\operatorname{R}_{\fsl}(M)$ is an $\h_{\fsl}$-weight module. Then $M$ is an $\h_\fgl$-weight module. Moreover, $\mu,\lambda\in \Supp M$ are equal if and only if $\mu|_{\h_{\fsl}} = \lambda|_{\h_{\fsl}}$. In particular, $\Supp \operatorname{R}_{\fsl}(M) = \{\lambda':=\lambda|_{\h_{\fsl}}\mid \lambda\in \Supp M\}$ and $\operatorname{R}_{\fsl}(M)_{\lambda'}=M_{\lambda}$.
\end{lem}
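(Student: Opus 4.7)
The key observation I would use is the vector-space decomposition $\h_\fgl = \h_\fsl \oplus \C E_{n,n}$, valid for any $n \in I$. Indeed, $\h_\fsl$ is the trace-zero hyperplane of $\h_\fgl$ and $\operatorname{tr}(E_{n,n})=1$, so every $h \in \h_\fgl$ can be written uniquely as
$$h = \tilde h + \operatorname{tr}(h)\, E_{n,n}, \qquad \tilde h := h - \operatorname{tr}(h)\, E_{n,n} \in \h_\fsl.$$
My plan is to exploit the l.a.c.\ to absorb the $E_{n,n}$-component into the annihilator of any given vector, thereby reducing the unknown $\h_\fgl$-action to the given $\h_\fsl$-action.

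Concretely, to show that $M$ is an $\h_\fgl$-weight module, I would fix $\lambda' \in \Supp \operatorname{R}_\fsl(M)$ and a nonzero $m \in M_{\lambda'}$. By the l.a.c.\ I can choose $n$ large enough that $\ft_n' \cdot m = 0$; in particular $E_{n,n}\, m = 0$. Then for any $h \in \h_\fgl$ the above decomposition gives
$$h\, m = \tilde h\, m + \operatorname{tr}(h)\, E_{n,n}\, m = \lambda'(\tilde h)\, m,$$
so $m$ is a simultaneous eigenvector for all of $\h_\fgl$. Since the eigenvalue depends linearly on $h$, it defines a weight $\lambda \in \h_\fgl^*$ that manifestly restricts to $\lambda'$ on $\h_\fsl$.

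To pin down $\lambda$ in terms of $\lambda'$ only, I would take any two nonzero $m, m' \in M_{\lambda'}$, pick a single $n$ larger than both of their annihilation thresholds, and observe that the formula $\lambda'(\tilde h)$ computes the $h$-eigenvalue on each, so $m$ and $m'$ share the same $\h_\fgl$-weight. Hence each $\h_\fsl$-weight space $M_{\lambda'}$ is a single $\h_\fgl$-weight space $M_\lambda$, and summing over $\lambda'$ yields the $\h_\fgl$-weight decomposition of $M$. The \emph{moreover} clause follows immediately: if $\lambda, \mu \in \Supp M$ restrict to the same $\lambda' \in \h_\fsl^*$, then nonzero representatives in $M_\lambda$ and $M_\mu$ both lie in $M_{\lambda'}$, where by what we just proved they must share the same $\h_\fgl$-weight, forcing $\lambda = \mu$. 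The \emph{in particular} statements are then formal consequences of the bijection $\lambda \leftrightarrow \lambda|_{\h_\fsl}$ on supports together with the equality $M_{\lambda'} = M_\lambda$. There is no real obstacle here: the whole argument rests on the one-dimensionality of $\h_\fgl/\h_\fsl$, represented by $E_{n,n}$ for any $n$, combined with the fact that the l.a.c.\ kills that representative on any given vector for $n \gg 0$; the only mild care needed is the common-$n$ trick used to check well-definedness.
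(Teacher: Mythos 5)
Your proposal is correct and takes essentially the same route as the paper: both rest on the decomposition $\h_\fgl=\h_\fsl\oplus\C E_{n,n}$ together with the l.a.c., which forces $E_{n,n}m=0$ (equivalently $\lambda(E_{n,n})=0$) for $n\gg 0$ on any given weight vector. Your added details (the explicit trace formula for $\tilde h$ and the common-$n$ check) merely spell out what the paper's proof leaves implicit.
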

\begin{proof}
For any $\h_\fsl$-weight vector $m$, we can use the l.a.c. to find $E_{n,n}\in \h_{\fgl}$ for $n\gg 0$ such that $E_{n,n} m=0$. Since $\h_\fgl = \h_\fsl\oplus \C E_{n,n}$, we get that $m$ is also an $\h_\fgl$-weight vector.

Now, let $\mu,\lambda\in \Supp M$ such that $\mu |_{\h_\fsl} = \lambda |_{\h_\fsl}$. Again by the l.a.c., for nonzero $m\in M_{\mu}$, we can choose $n\gg 0$ so that $0=E_{n,n} m= \mu(E_{n,n}) m$, and similarly for $\lambda$.  Thus, we can choose  $n\gg 0$, such that $\mu(E_{n,n}) = \lambda(E_{n,n})=0$. Since $\h_\fgl = \h_\fsl\oplus \C E_{n,n}$ this implies $\mu = \lambda$. Finally, since $M$ and $\operatorname{R}_{\fsl}(M)$ have the same underlying vector space, this completes the proof.
\end{proof}

\begin{lem}\label{lem:tensorModules=weightModules}
If $M\in \bbTgl$, then $M$ is an $\h_\fgl$-weight module.
\end{lem}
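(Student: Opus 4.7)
The plan is to combine the two preceding lemmas with the known fact about $\bbTsl$. First, I would invoke Lemma~\ref{lem:restTglInTsl} to transfer $M$ from $\bbTgl$ to $\bbTsl$: the restriction $\operatorname{R}_{\fsl}(M)$ lies in $\bbTsl$, so by the cited result \cite[Theorem 3.4]{DPS16} (mentioned in the paragraph just before Lemma~\ref{lem:restTglInTsl}), $\operatorname{R}_{\fsl}(M)$ is an $\h_\fsl$-weight module, because $\h_\fsl$ is a splitting Cartan subalgebra of $\fsl(\infty)$.

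Next, I would note that, since $M$ belongs to $\bbTgl$, it satisfies the large annihilator condition by Definition~\ref{def Tgl}(3). Thus the hypotheses of Lemma~\ref{lem:sl-weight=gl-weight} are fulfilled: $M$ is a $\fgl(\infty)$-module satisfying the l.a.c.\ whose restriction to $\fsl(\infty)$ is an $\h_\fsl$-weight module. The conclusion of that lemma then gives that $M$ is an $\h_\fgl$-weight module, which is exactly what we want to prove.

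There is no real obstacle here: the statement is a direct composition of the two preceding lemmas with the quoted theorem from \cite{DPS16}. The only content that had to be developed earlier was (i) that restriction is well defined into $\bbTsl$, so that the $\fsl(\infty)$ weight theory applies, and (ii) the comparison between $\h_\fsl$-weight vectors and $\h_\fgl$-weight vectors via the one-dimensional complement $\h_\fgl = \h_\fsl \oplus \C E_{n,n}$, where $n$ is chosen large enough using the l.a.c. Both of these inputs are already in place.
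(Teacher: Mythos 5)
Your proposal is correct and follows exactly the paper's own argument: restrict via Lemma~\ref{lem:restTglInTsl}, use the fact (from \cite[Theorem 3.4]{DPS16}) that objects of $\bbTsl$ are $\h_\fsl$-weight modules, and then apply Lemma~\ref{lem:sl-weight=gl-weight}, whose l.a.c.\ hypothesis is guaranteed by Definition~\ref{def Tgl}(3). Nothing further is needed.
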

\begin{proof}
Since  $\operatorname{R}_{\fsl}(M)\in\bbTsl$ by Lemma~\ref{lem:restTglInTsl}, we have
that $\operatorname{R}_{\fsl}(M)$ is an $\h_\fsl$-weight module. Now the statement follows from Lemma~\ref{lem:sl-weight=gl-weight}
\end{proof}

\begin{prop}\label{prop T eq}
The categories $\bbTgl$ and $\bbTsl$ are equivalent. 
\end{prop}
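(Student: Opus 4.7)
My plan is to establish the equivalence by showing that the restriction functor $\operatorname{R}_{\fsl}\colon \bbTgl \to \bbTsl$ of Lemma~\ref{lem:restTglInTsl} is fully faithful and essentially surjective. Faithfulness is immediate since $\operatorname{R}_{\fsl}$ acts as the identity on underlying $\C$-linear maps.

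For fullness, I would argue that an $\fsl(\infty)$-linear map $f\colon \operatorname{R}_{\fsl}(M) \to \operatorname{R}_{\fsl}(N)$ between objects of $\bbTgl$ is automatically $\fgl(\infty)$-linear. By Lemma~\ref{lem:tensorModules=weightModules}, both $M$ and $N$ are $\h_\fgl$-weight modules, and by Lemma~\ref{lem:sl-weight=gl-weight} their $\h_\fsl$- and $\h_\fgl$-weight decompositions coincide. For $m \in M_\lambda$, the image $f(m)$ is a sum of $\h_\fgl$-weight vectors $n_i \in N_{\mu_i}$ with $\mu_i|_{\h_\fsl} = \lambda|_{\h_\fsl}$. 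Applying the l.a.c.\ to $m$ and each $n_i$, I can pick $n$ large enough that $\lambda(E_{n,n}) = \mu_i(E_{n,n}) = 0$; since $\h_\fgl = \h_\fsl \oplus \C E_{n,n}$, this forces $\mu_i = \lambda$. Hence $f$ preserves $\h_\fgl$-weights and thus commutes with each diagonal $E_{n,n}$; combined with $\fsl$-linearity and the identity $\fgl(\infty) = \fsl(\infty) + \C E_{n,n}$, this gives $\fgl$-linearity.

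For essential surjectivity, given $M \in \bbTsl$, I would invoke \cite[Corollary 4.6]{DPS16} to realize $M$ as a subquotient of a finite direct sum of modules of the form $V^{\otimes p} \otimes V_*^{\otimes q}$, all of which are naturally $\fgl(\infty)$-modules in $\bbTgl$. The key observation is that any $\fsl(\infty)$-subquotient of an object $X \in \bbTgl$ is automatically a $\fgl(\infty)$-subquotient: since $X$ is a weight module whose $\h_\fsl$- and $\h_\fgl$-weight decompositions agree, any $\fsl$-submodule of $X$ is spanned by $\h_\fgl$-weight vectors, on which the diagonal elements $E_{n,n}$ act by scalars and hence preserve the submodule. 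Then $M$ inherits a canonical $\fgl(\infty)$-module structure, and the three defining conditions of $\bbTgl$ (finite length, integrability, l.a.c.) descend to subquotients, so $M \in \bbTgl$ with $\operatorname{R}_{\fsl}(M) = M$.

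The main obstacle lies in the weight-theoretic bookkeeping that underlies both fullness and essential surjectivity: one must consistently exploit the interaction of the l.a.c.\ with the decomposition $\h_\fgl = \h_\fsl \oplus \C E_{n,n}$ to promote $\fsl$-data (weights, submodules, module maps) to $\fgl$-data. Once Lemmas~\ref{lem:sl-weight=gl-weight} and~\ref{lem:tensorModules=weightModules} are in hand, the remainder of the argument reduces to these two routine weight-space arguments.
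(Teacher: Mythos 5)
Your proof is correct and takes essentially the same route as the paper: both arguments transport the $\fgl(\infty)$-structure via \cite[Corollary 4.6]{DPS16} and use Lemmas~\ref{lem:sl-weight=gl-weight} and~\ref{lem:tensorModules=weightModules} together with the decomposition $\h_\fgl=\h_\fsl\oplus\C E_{n,n}$, the only difference being that the paper packages this as an explicit inverse functor $F$ with $F(f)=f$ while you verify full faithfulness and essential surjectivity directly. Your fullness step in fact makes explicit a point the paper leaves implicit, namely that an $\fsl(\infty)$-linear map between objects of $\bbTgl$ is automatically $\fgl(\infty)$-linear.
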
 
\begin{proof} 
Let $M'\in \bbTsl$.  Then, by \cite[Corollary~4.6]{DPS16}, $M'$ is isomorphic to a submodule of a finite direct sum of copies of the tensor algebra $T(V\oplus V_*)$, and so $M'$ inherits a natural $\fgl(\infty)$-module structure. It is clear that $M'$ with this $\fgl(\infty)$-module structure is an object of $\bbT_{\fgl(\infty)}$. Moreover, since $\fgl (\infty) = \fsl (\infty)\oplus \C E_{n,n}$ for every $n\in \N$, this is the only way to extend the $\fsl(\infty)$-module structure of $M'$ to a $\fgl(\infty)$-module structure in such way that the resulting $\fgl(\infty)$-module satisfies the l.a.c.. Indeed,
Lemma~\ref{lem:sl-weight=gl-weight} shows that the action of $\h_\fsl$ on $M'$ determines the action of $\h_\fgl$. We let $M\in \bbTgl$ denote $M'$ viewed as a $\fgl(\infty)$-module endowed with such structure, and define a functor $F:\bbTsl \to \bbTgl$ by setting $F(M') = M$, and $F(f)=f$ for all $f\in \Hom_{\bbTsl}(M, N)$. 
Then the functors $F$ and $\operatorname{R}_{\fsl}$ are inverse to each other.
\end{proof}

We denote by $\tbbTgl$ the Grothendieck envelope of the category $\bbTgl$,  that is, $\tbbTgl$ is the full subcategory of
$\fgl(\infty)$-mod with objects being arbitrary sums of objects in $\bbTgl$ (see \cite{ChP17}). We let $\tbbTsl$ denote the Grothendieck envelope of $\bbTsl$. It follows that  $\tbbTgl$ and $\tbbTsl$ are equivalent. Note that  $\bbTgl$ and  $\tbbTgl$ have the same simple objects.

We have the following characterization of  $\tbbTgl$.

\begin{prop}\label{lem Groth T} The category
 $\tbbTgl$ is the full subcategory of $\fgl(\infty)$-mod  consisting of modules $M$ that satisfy the following  conditions:
\begin{enumerate}[(1)]
\item $M$ is integrable;
\item $M$ satisfies the l.a.c. 
\end{enumerate}
\end{prop}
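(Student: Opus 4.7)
The forward inclusion is essentially by inspection: integrability and the l.a.c.\ are both preserved under internal sums of submodules, so any sum of $\bbTgl$-objects satisfies (1) and (2). The real work is the converse, which I would approach by reducing to the analogous fact for $\fsl(\infty)$ and then using the equivalence of Proposition~\ref{prop T eq}.

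Let $M$ satisfy (1) and (2). A key preliminary observation is that under the l.a.c.\ every $\fsl(\infty)$-submodule $N'\subseteq M$ is automatically a $\fgl(\infty)$-submodule. Indeed, for $v\in N'$ and any $i\in I$, the l.a.c.\ furnishes a $j$ with $E_{j,j}v=0$; since $\fgl(\infty)=\fsl(\infty)\oplus\C E_{j,j}$, we then have $E_{i,i}v=(E_{i,i}-E_{j,j})v\in N'$ because $E_{i,i}-E_{j,j}\in\fsl(\infty)$. Combined with $\fsl(\infty)$-stability, this shows $\fgl(\infty)\cdot N'\subseteq N'$.

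Now $\operatorname{R}_{\fsl}(M)$ is still integrable and still satisfies the l.a.c., so by the corresponding characterization of $\tbbTsl$ (see \cite{DPS16}) it lies in $\tbbTsl$. Hence, as an $\fsl(\infty)$-module, $M=\sum_{\alpha}N_{\alpha}'$ for submodules $N_{\alpha}'\in\bbTsl$; by the previous paragraph each $N_{\alpha}'$ is even a $\fgl(\infty)$-submodule $N_{\alpha}\subseteq M$. The $\fgl(\infty)$-structure on $N_{\alpha}$ inherited from $M$ satisfies the l.a.c., so by the uniqueness clause noted in the proof of Proposition~\ref{prop T eq} it must coincide with the structure produced by the functor $F$ there; in particular $N_{\alpha}=F(N_{\alpha}')\in\bbTgl$. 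This exhibits $M=\sum_{\alpha}N_{\alpha}$ as an internal sum of $\bbTgl$-submodules, i.e.\ $M\in\tbbTgl$.

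The main obstacle is the precise citation (or otherwise direct verification) of the $\fsl(\infty)$-analog: that an integrable $\fsl(\infty)$-module satisfying the l.a.c.\ belongs to $\tbbTsl$. If it is not directly quotable, one would supply a short argument — first showing such a module is an $\h_{\fsl}$-weight module, then reducing to the cyclic case and invoking \cite[Cor.~4.6]{DPS16} to get that each cyclic piece is of finite length and a tensor module. Granted this input, the rest of the passage to $\fgl(\infty)$ is formal, resting only on the l.a.c.\ trick above and Proposition~\ref{prop T eq}.
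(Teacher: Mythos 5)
Your forward direction and your reduction to $\fsl(\infty)$ are fine: the observation that under the l.a.c.\ every $\fsl(\infty)$-submodule is automatically $\fgl(\infty)$-stable (via $E_{i,i}v=(E_{i,i}-E_{j,j})v$ with $E_{j,j}v=0$) is correct, and it mirrors what the paper does, since the paper's proof also passes to $\operatorname{R}_{\fsl}(M)$ and works with $\fsl(\infty)$-submodules. But the entire mathematical content of the statement is the ``$\fsl(\infty)$-analog'' you defer to, and your sketch of it has a genuine gap: you propose to reduce to cyclic submodules and invoke \cite[Corollary~4.6]{DPS16} to conclude that each cyclic piece has finite length and is a tensor module. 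That corollary says that objects of $\bbTsl$ --- modules \emph{already assumed} to have finite length, be integrable, and satisfy the l.a.c.\ --- are tensor modules; it does not assert, and cannot be used to derive, that a finitely generated integrable module satisfying the l.a.c.\ has finite length. That finite-length claim is precisely the hard step, and nothing in your argument supplies it.

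The paper closes this gap with different machinery: it shows that $M|_{\fsl(\infty)}$ lies in the category $\mathcal{OLA}_\fb$ of \cite{PS19}, which requires (i) the l.a.c., (ii) $\h_{\fsl}$-semisimplicity --- obtained by observing that the proof of $(3)\Rightarrow(2)\Rightarrow(1)$ in \cite[Theorem~3.4]{DPS16} needs only finite generation rather than finite length, applied to each finitely generated piece $M_\gamma$ --- and (iii) local nilpotency of $\n^0$, which follows from integrability once one has the weight decomposition. It then cites \cite[Proposition~4.17]{PS19}, which says that finitely generated objects of $\mathcal{OLA}_\fb$ have finite length; the l.a.c.\ upgrades finite length over $\fsl(\infty)$ to finite length over $\fgl(\infty)$, so each $M_\gamma$ lies in $\bbTgl$ and $M\in\tbbTgl$. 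Note also that your intermediate step ``first showing such a module is an $\h_{\fsl}$-weight module'' is itself not free: it is exactly where the paper needs the finitely-generated refinement of \cite[Theorem~3.4]{DPS16}. So to repair your proposal you would have to import both of these ingredients (the weight-module argument for finitely generated modules and \cite[Proposition~4.17]{PS19} or an equivalent finite-length criterion), at which point your argument becomes the paper's.
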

\begin{proof} First, if $M\in\tbbTgl$ then $M$ satisfies (1) and (2), since $M=\sum M_\gamma$ where each $M_\gamma\in\bbTgl$ satisfies these conditions.

Next, suppose $M$ is a $\fgl(\infty)$-module satisfying conditions (1) and (2).  We claim  $M=M|_{\fsl(\infty)}$ lies in the category $\mathcal{OLA}_\fb$, as defined in \cite[Section~3]{PS19}. To show this we need to verify the following conditions: (i) $M$ satisfies the l.a.c.;
(ii) $M$ is $\h_{\fsl}$-semisimple; (iii) every $x\in\n^0$ acts locally nilpotently on $M$. We get (i) from (2). To prove that (1) and (2) imply (ii), we write $M=\sum M_\gamma$, where all $M_\gamma$'s are finitely generated $\fsl(\infty)$-modules. Now, we notice that the proof of $(3) \Rightarrow (2) \Rightarrow (1)$ in \cite{DPS16}[Theorem~3.4] does not require an $\fsl(\infty)$-module to be of finite length, but only finitely generated. Therefore, we can apply this result to each $M_\gamma$ to conclude that each such module satisfies (ii). It follows that $M$ itself satisfies (ii) (see Remark~\ref{rem:MIntbbTglIsWeightModule} below). Finally, since $M$ is $\h_{\fsl}$-semisimple, condition (iii) follows from integrability. Indeed, if $x\in(\n(\prec)^0)_{\alpha}$ and  $\in M_{\mu}$, then integrability implies that $x^n m=0$ for $n\gg 0$, since $x^k m\in M_{\mu+k\alpha}$. The claim follows.

Since each $M_\gamma$ is a finitely generated $\fsl(\infty)$-module in $\mathcal{OLA}_\fb$,
it follows from \cite[Proposition~4.17]{PS19} that each such module is a finite-length $\fsl(\infty)$-module (and hence finite-length  $\fgl(\infty)$-module by the l.a.c.). Therefore,  each  $M_\gamma$ lies in $\bbTgl$, and we  conclude that $M\in\tbbTgl$, since $\tbbTgl$ is closed under arbitrary sums.
\end{proof}

The claim and proof of Proposition~\ref{lem Groth T} hold if we replace $\fgl(\infty)$ by $\fsl(\infty)$ everywhere.

\begin{rem}\label{rem:MIntbbTglIsWeightModule}
If $M\in \tbbTgl$, then we claim that $M$ is a $\h_\fgl$-weight module. Indeed, notice that arbitrary direct sums, submodules and quotients of weight modules are weight modules. Then the claim follows from the fact that arbitrary sums are quotients of arbitrary direct sums.
\end{rem}

%%%%%%%%%%%%%%%%
%
\section{The Lie superalgebra $W(\infty)$}\label{sec:W}
%
%%%%%%%%%%%%%%%%

In this section, we define $W(\infty)$ to be the direct limit $W(\infty) := \lim\limits_{\longrightarrow} W(n)$ of the finite-dimensional Cartan type Lie superalgebras $W(n)$, with respect to the obvious embedding. The Lie superalgebra $W(\infty)$ is a proper subalgebra of  $\Der(\Lambda(\infty))$, the Lie superalgebra of all superderivations of the (infinite) Grassmann algebra  $\Lambda(\infty)$.
We also describe the root system of $W(\infty)$ with respect to our choice of Cartan subalgebra.

\subsection{The Grassmann algebra $\Lambda(\infty)$} 

The Grassmann algebra $\Lambda(n)$ is by definition the free commutative (unital) superalgebra with $n$ odd generators $\xi_1,\ldots,\xi_n$.  In particular, $\xi_i^2=0$ for  $i\in \{1,2,\ldots,n\}$. 
We define the (infinite) Grassmann algebra  $\Lambda(\infty)$ to be the free commutative superalgebra with countably many odd generators $\{\xi_i\}_{i\in I}$ satisfying $\xi_i^2=0$. Clearly, $\Lambda(\infty) = \lim\limits_{\longrightarrow} \Lambda(n)$, where $\Lambda(n)\hookrightarrow \Lambda(n+1)$ in the obvious way.

The Grassmann algebra  $\Lambda(\infty)$  has a natural $\Z$-grading $\Lambda(\infty)=\bigoplus_{k\geq 0} \Lambda(\infty)^k$,  obtained by setting $\deg \xi_i=1$ for each generator $\xi_i$, $i\in I$,   which is compatible with the superalgebra grading. 
We let $$\cE:=\{\underline{e}: I\to \{0,1\}\mid \Supp \underline{e}<\infty\}$$ and, for any $\underline{e}\in \cE$, we set $$\underline{\xi}^{\underline{e}}=\xi_{i_1}\cdots \xi_{i_n}:=\xi_{i_1}\wedge\cdots\wedge \xi_{i_n},$$ where $\Supp \underline{e}=\{i_1,\ldots, i_n\}$ and $i_1<\cdots < i_n$. Then $\deg \underline{\xi}^{\underline{e}}=|\Supp\underline{e}|=\sum_{i\in I} \underline{e}(i)$ and
the set $\{\underline{\xi}^{\underline{e}}\mid \underline{e} \in \cE\}$ is a $\C$-basis for $\Lambda(\infty)$. 
So 
 $\Lambda(\infty)^k=\Lambda^{k+1}(V)=\langle \xi_{i_1}\cdots \xi_{i_k}\mid i_1<\cdots < i_k\rangle_\C$ and $\Lambda(\infty)^0=\C$.

\subsection{The Lie superalgebra $W(\infty)$ }
Now let $W(n)$ denote the Lie superalgebra $\Der(\Lambda(n))$ of superderivations of $\Lambda(n)$. 
An element of $W(n)$ can be written as $\sum_{i=1}^n  P_i\partial_i$, where $P_i\in\Lambda(n)$ and $\partial_i$, for $i=1,\ldots,n$, is the derivation defined by 
$$\partial_i(\xi_j)=\delta_{ij}.$$
 Setting  $\deg \partial_i=-1$ yields a $\Z$-grading 
$$W(n)=\bigoplus_{k=-1}^{n-1} W(n)^k,$$
 with the property that $W(n)^0$ is isomorphic to $\fgl(n)$ and $W(n)^{-1}$ is isomorphic to the conatural module of $\fgl(n)$.

We define the Lie superalgebra $W(\infty) $ to be the direct limit $W(\infty) :=\lim\limits_{\longrightarrow} W(n)$, where the embedding $W(n)\hookrightarrow W(n+1)$ is defined in the obvious way, by sending the generators  $\xi_1,\ldots,\xi_n,\partial_1,\ldots,\partial_n$ of $W(n)$ to the generators of  $W(n+1)$ that have the same name. Then  $W(\infty)$ is an infinite-dimensional locally finite Lie superalgebra, that is locally simple, and hence simple. 

Let $\Der(\Lambda(\infty))\subseteq \End_\C(\Lambda(\infty))$ be the Lie superalgebra of all superderivations of $\Lambda(\infty)$.  The Lie superalgebra $\Der(\Lambda(\infty))$ is quite large, and  $W(\infty)$ can be identified with the subspace of $\Der(\Lambda(\infty))$ spanned by all elements of the form
$
P(\xi)\partial_i
$
with $P(\xi)\in\Lambda(\infty)$, $i\in I$.

 The underlying super vector space of $W(\infty)$ is $\Lambda(V)\otimes V_*$, and the set 
$$\{\underline{\xi}^{\underline{e}} \partial_j\mid \underline{e} \in \cE,\  i\in I\}$$
 is a  $\C$-basis for $W(\infty)$.
  The element $\underline{\xi}^{\underline{e}} \partial_i$ has degree $(\sum \underline{e}(k))-1$.
 The bracket of homogeneous elements is given by
 $$
 [\underline{\xi}^{\underline{a}} \partial_j,
 \underline{\xi}^{\underline{b}}\partial_\ell]
 = \underline{\xi}^{\underline{a}} \partial_j \circ
 \underline{\xi}^{\underline{b}}\partial_\ell
 - (-1)^{\deg (\underline{\xi}^{\underline{a}} \partial_j) \deg(
 \underline{\xi}^{\underline{b}}\partial_\ell)}
 \underline{\xi}^{\underline{b}}\partial_\ell \circ
 \underline{\xi}^{\underline{a}}\partial_j
 $$ 
 and  is of the form
 \begin{equation}\label{bracket form}
 [\underline{\xi}^{\underline{a}} \partial_j,
 \underline{\xi}^{\underline{b}} \partial_\ell]
 =\pm\ \delta_{\underline{b}(j),1}\ \underline{\xi}^{\underline{a}} \underline{\xi}^{\underline{b}-\underline{e}_j}\partial_\ell \ \pm \
 \delta_{\underline{a}(\ell),1}\ \underline{\xi}^{\underline{b}} \underline{\xi}^{\underline{a}-\underline{e}_\ell}\partial_j,
\end{equation} 
 where $\underline{a},\underline{b}\in\cE$ and $j,\ell \in \Z_{>0}$ (see e.g. \cite{Gav15}).
The  $\Z$-grading on $W(\infty)$ is
	\[
W(\infty)=\bigoplus_{k\geq -1} W(\infty)^k,\text{ where } W(\infty)^k=\operatorname{span}\{ P_j\partial_j\mid P_j\in \Lambda(\infty)^{k+1}, j\in I\},
	\]
and is compatible with the superalgebra grading, i.e., $W(\infty)_{\bar 0}=\bigoplus_{k\geq 0} W(\infty)^{2k}$.

Now the zeroth component $W(\infty)^0$ is isomorphic to $\fgl(\infty)$, and we will identify  $W(\infty)^{0}$ with $\fgl(\infty)$   by sending $\xi_i\partial_j$ to $E_{i,j}$.  Then  $\fgl(\infty)$ acts on each graded component $W(\infty)^k$ by restricting the adjoint action to $W(\infty)^{0}$. For each $k\geq -1$, we have a $\fgl(\infty)$-module isomorphism $$W(\infty)^{k}\cong\Lambda^{k+1}(V)\otimes V_*,$$ while for $k\leq -2$, we have
$W(\infty)^{k}=\{0\}$.

We fix a Cartan subalgebra $\h$ of $W(\infty)$  to be the subspace spanned by $\xi_i\partial_i$, for $i\in I$. Then $\h$ is also a Cartan subalgebra of  $W(\infty)^{0}$ and, under our identification with $\fgl(\infty)$, we have that $\h=\h_\fgl$ is the space of diagonal matrices. The root system of $W(\infty)$ is 
\[
\Delta= \{\varepsilon_{i_1}+\cdots + \varepsilon_{i_k}- \varepsilon_{j} \mid i_1 <\cdots < i_k,\ j\neq i,\ k\geq 0\}\cup
 \{\varepsilon_{i_1}+\cdots + \varepsilon_{i_k}\mid i_1 <\cdots < i_k,\ k\geq 1\},
\]
where $\xi_{i_1}\cdots \xi_{i_k}\partial_j\in W(\infty)_{\varepsilon_{i_1}+\cdots + \varepsilon_{i_k}- \varepsilon_{j}}$.

%%%%%%%%%%%%%%%%
%
\section{Categories of $W(\infty)$-modules}\label{sec: W mod}
%
%%%%%%%%%%%%%%%%

From now on let $\g=W(\infty)$,  let  $\g^k=W(\infty)^k$, and let $\g_n$ be the image of $W(n)$ in $\g$ under the natural inclusion so that  $\g=\bigcup\g_n$. Then, under our identification, we have $\g^0=\fgl(\infty)$,  $\g_n^0=\fgl(n)$ and $\g^0=\bigcup\g_n^0$.

In this section, we introduce three categories of $\g$-modules: $\bbT_W$, $\bbT_W^{\geq}$ and $\bbT_W^{\leq}$.
First, we need some definitions. 
For each $n\in \Zp$,   let $\ft_n$ be the subalgebra of $\g$ generated by root vectors as follows:
$$
\ft_n:=\left\langle\partial_j,\ \xi_{i_1}\cdots \xi_{i_k}\partial_j\mid j,i_t \geq n\right\rangle.
$$  Then $\ft_n\cap\g^0=\ft_n'$ (see (\ref{eq tn})).

\begin{defin}
Let $\fs$ be a subalgebra of $\g$.  We say that a subalgebra $\fk\subset\g$ has  \emph{finite corank relative to $\fs$} if  for some $n\gg 0$ we have   $(\ft_n\cap\fs) \subset\fk$. 
\end{defin}

\begin{defin} Let $M$ be a $\g$-module, and let $\fs$ be a subalgebra of $\g$.
We say that $M$  satisfies the \emph{large annihilator condition (l.a.c) for $\fs$} if, for every $m\in M$, the subalgebra $\Ann_{\g}(m)$ has finite corank relative to~$\fs$.
\end{defin}

For a $\g$-module $M$, let $M^{\ft_n}=\{m\in M\mid \ft_n\cdot m=0\}$.
\begin{lem}
$M$ satisfies the l.a.c. for $\g$ if and only if $M=\bigcup_{i>0} M^{\ft_n}$.
\end{lem}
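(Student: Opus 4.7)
The plan is to unwind the definitions and observe that this is essentially a tautology. The key identifications are: $\ft_n$ is already contained in $\g$, so ``$\Ann_\g(m)$ has finite corank relative to $\g$'' means ``$(\ft_n \cap \g) = \ft_n \subseteq \Ann_\g(m)$ for some $n \gg 0$'', which is equivalent to ``$\ft_n \cdot m = 0$ for some $n$'', i.e. $m \in M^{\ft_n}$.

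First I would record this elementary translation: for any fixed $m \in M$,
\[
\ft_n \subseteq \Ann_\g(m) \;\Longleftrightarrow\; m \in M^{\ft_n}.
\]
Combined with the definition of finite corank relative to $\g$, this gives that $\Ann_\g(m)$ has finite corank in $\g$ if and only if $m \in M^{\ft_n}$ for some $n > 0$.

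Next I would quantify over all $m \in M$: the l.a.c.\ for $\g$ is by definition the statement that the previous equivalent condition holds for every $m \in M$, which is exactly the assertion that $M = \bigcup_{n>0} M^{\ft_n}$. A brief sanity check, which is not really an obstacle, is to note that the family $\{M^{\ft_n}\}_{n>0}$ is nested: since $\ft_{n+1} \subseteq \ft_n$ by the definition of $\ft_n$ in terms of generators indexed by $j, i_t \geq n$, one has $M^{\ft_n} \subseteq M^{\ft_{n+1}}$, so the union is an increasing union and the ``$=$'' is unambiguous. There is no genuine difficulty in this lemma; it serves to repackage the l.a.c.\ in a form convenient for later use.
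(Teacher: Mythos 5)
Your proof is correct and is essentially the same as the paper's: both reduce the lemma to the tautological equivalence $\ft_n\subseteq \Ann_\g(m)\Leftrightarrow m\in M^{\ft_n}$ (using $\ft_n\cap\g=\ft_n$) and then quantify over $m\in M$. The added remark that $\ft_{n+1}\subseteq\ft_n$ makes the union increasing is correct but not needed.
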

\begin{proof}
Now $ m\in M^{\ft_n}  \Leftrightarrow \ft_n\subseteq \Ann_\g(m)$. Hence, for each $m\in M$, the subalgebra $\Ann_\g(m)$ has finite corank in $\g$ if and only $m\in M^{\ft_n}$ for some $n>0$.
\end{proof}

We consider the following subalgebras of $\g$: 
\begin{equation}\label{def g subs}
\g^{\geq}:=\bigoplus_{k\geq 0}\g^k, \hspace{1cm}
\g^>:=\bigoplus_{k> 0}\g^k,
\hspace{1cm}
\g^{\leq}:=\g^{-1}\oplus\g^{0},
\hspace{1cm}
\g^<:=\g^{-1}.
\end{equation}

\begin{defin}\label{def:T_W-Cat}
We define $\bbT_W$ (respectively, $\bbT_W^{\geq}$, $\bbT_W^{\leq}$) to be the full subcategory of $\g$-mod consisting of modules $M$  that satisfy the following  conditions:
\begin{enumerate} [(1)]
\item $M$ has a $\Z$-grading $M=\bigoplus_{k\in\Z}M^k$ that is compatible with the superalgebra grading;
\item $M$ is integrable over $\g^0$;
\item $M$ satisfies the l.a.c. for $\g$ (respectively, for $\g^{\geq}$, $\g^{\leq}$).
\end{enumerate} 
\end{defin}  

Then $\bbT_W$ and  $\bbT_W$ are  abelian symmetric monoidal categories.

\begin{rem}
If $M$ is an object of $\bbT_W$, $\bbT_W^{\geq}$ or $\bbT_W^{\leq}$, then the $\fgl(\infty)$-module $M|_{\g^0}$ obtained by restricting to $\g^0$ is by definition an object of $\tbbTgl$, since $\g^0\subseteq \g^\geq\cap \g^\leq$. It follows from Remark~\ref{rem:MIntbbTglIsWeightModule}
that the $W(\infty)$-module $M$ is an $\h$-weight module.
\end{rem}

\begin{rem}
Clearly, $\bbT_W$ is a subcategory of $\bbT_W^\leq$ and $\bbT_W^\geq$. Moreover, $\bbT_W$ is precisely the full subcategory of $\g$-mod whose objects are in both $\bbT_W^\leq$ and $\bbT_W^\geq$. Indeed, if $M\in\bbT_W^\leq$ and $M\in\bbT_W^\geq$, then for each $m\in M$ we can pick $n\gg 0$ so that both $\ft_n\cap \g^\geq$ and $\ft_n\cap \g^\leq$ are contained in $\Ann_\g(m)$, which implies that $\ft_n\subseteq \Ann_\g(m)$.
\end{rem}

\begin{example}\label{ex:TW}
Here are a few examples of modules in $\bbT_W$: the trivial module $\C$, the \emph{natural module} $\Lambda(V)$, the simple natural module $\Lambda(V)_+:=\Lambda(V)/\C$, and the adjoint module $\g$.
It is clear that condition (1) holds for these modules, while  (2) follows from the next easy lemma. Condition (3) can be verified by direct computation.  \hfill $\bigcirc$ \end{example}

\begin{lem}
Suppose $M=\bigcup M_n$ is a $\g$-module such that each $M_n$ is a finite-dimensional $\g_n$-module. Then   $M$ is integrable over $\g^0$.
\end{lem}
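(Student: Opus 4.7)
The statement is essentially immediate once one tracks the obvious indices, so the proof plan is to reduce integrability of a single element against a single operator to finite-dimensionality of one of the exhausting pieces $M_k$.

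The plan is as follows. Fix $x \in \g^0$ and $m \in M$. Since $\g^0 = \fgl(\infty) = \bigcup_n \g_n^0$ and $M = \bigcup_n M_n$, I would first choose indices $n_1$ and $n_2$ such that $x \in \g_{n_1}^0 \subseteq \g_{n_1}$ and $m \in M_{n_2}$. Setting $N := \max(n_1, n_2)$, we have $x \in \g_N$ and $m \in M_N$ simultaneously. Here I would note that the chain $M_n \subseteq M_{n+1}$ is a chain of $\g_n$-modules, compatibly with the embeddings $\g_n \hookrightarrow \g_{n+1}$, which follows from the hypothesis that $M = \bigcup M_n$ is a well-defined $\g$-module.

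Next, since $M_N$ is a $\g_N$-submodule of $M$ and $x \in \g_N$, the subspace $M_N$ is stable under $x$, hence $x^i m \in M_N$ for every $i \in \Zp$. Because $M_N$ is finite-dimensional by assumption, the subspace
\[
\Span_\C\{x^i m \mid i \in \Zp\} \subseteq M_N
\]
is finite-dimensional. This is exactly the integrability condition for $\g^0$, so $M$ is integrable over $\g^0$.

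There is no real obstacle here; the statement is a bookkeeping exercise using that both $\g^0$ and $M$ are exhausted by compatible finite pieces. The only point worth stating carefully is the compatibility of the $\g_N$-module structure on $M_N$ with the ambient $\g$-action on $M$, which is built into the hypothesis.
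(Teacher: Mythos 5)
Your proof is correct and is essentially identical to the paper's argument: pick $N$ large enough that both $x\in\g_N^0$ and $m\in M_N$, observe $x^im\in M_N$ for all $i$, and conclude from $\dim M_N<\infty$. Nothing further is needed.
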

\begin{proof} 
Let $m\in M$ and $x\in\g^0$.  Choose $N\gg 0$ such that $m\in M_n$ and $x\in \g_n^0$. Then $\Span_\C \{x^i m\mid i\in  \Zp\} \subseteq M_n$, which is finite dimensional. 
\end{proof}

\begin{rem} Note that integrability of a $\g$-module over the even part $\g_{\bar 0}$ implies integrability over $\g$, since for any odd element $x\in \g_{\bar 1}$, we have $x^2 = (1/2)[x,x]\in \g_{\bar 0}$. However,  integrability over the zeroth component $\g^0$ does not imply integrability over $\g$.  Consider, for example, the induced module  $K^-(X)=\Ind_{\g^\leq}^{\g} (X)$ where $X\in\bbTgl$ and $\g^<$ acts trivially on $X$. Then $K^-(X)$ is integrable over $\g^0$ but not over $\g$, since $\bU(\g^>)$ acts freely on it.
\end{rem}

\section{Induced modules} 
In this section, we study $W(\infty)$-modules which are induced from $\fgl(\infty)$-modules.

\begin{defin}
Let $X$ be a module over $\g^0=\fgl(\infty)$, and let $\g^>$ (respectively, $\g^<$) act trivially on it. We define the induced $\g$-modules
\begin{align*}
& K^+(X):=\Ind_{\g^\geq}^{\g} (X) = \bU(\g)\otimes_{\bU(\g^\geq)} X, \\
& K^-(X):=\Ind_{\g^\leq}^{\g} (X) = \bU(\g)\otimes_{\bU(\g^\leq)} X.
\end{align*}
\end{defin}

\begin{rem}
It follows from the PBW Theorem that $\bU(\g)$ is a free right $\bU(\g^{\geq })$-module. Thus $K^+(-)$ is an exact functor. The same holds for $K^-(-)$.
\end{rem} 

 It is standard to show that $K^\pm(X)$ admits a unique simple quotient if and only if $X$ is a simple $\g^0$-module. Such a simple quotient will be denoted by $L^\pm(X)$.

We can define $\g_n^\geq$, $\g_n^>$, $\g_n^\leq$ and $\g_n^<$ analogously to (\ref{def g subs}). Let $X_n$ be a $\g_n^0$-module, and extend the action to  $\g_n^>$ (respectively, $\g_n^<$)  trivially. Similarly, we define the induced $\g_n$-modules 
$$
K_n^+(X_n):=\Ind_{\g_n^\geq}^{\g_n} (X_n), \hspace{1cm}
K_n^-(X_n):=\Ind_{\g_n^\leq}^{\g_n} (X_n),
$$
and, for simple $X_n$, their simple quotients are denoted by $L^\pm_n(X_n)$.

We have the following result.

\begin{prop}\label{prop:kac.lim=lim.kac} For each $n\geq 2$, let $f_n : X_n \hookrightarrow X_{n+1}$ be an embedding of $\g_n^0$-modules, and consider the natural embedding $f_n : K^\pm (X_n) \hookrightarrow K^\pm (X_{n+1})$ of $\g_n$-modules, where $f_n(uv) = uf_n(v)$ for every $u\in \bU\left(\g_n\right)$ and $v\in X_n$. 

Then we have an isomorphism of $\g$-modules
$$K^\pm(\varinjlim_n X_n)\cong \varinjlim_n K_n^\pm(X_n),$$
where the limits are taken over the family $\{f_n\}$.
\end{prop}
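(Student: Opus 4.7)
The plan is to exploit a PBW-type decomposition to reduce the statement to a vector-space identity, and then invoke the fact that tensor products commute with direct limits. I spell out the argument for $K^+$; the case of $K^-$ is entirely analogous, using the decomposition $\g = \g^> \oplus \g^\leq$ in place of $\g = \g^< \oplus \g^\geq$.

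First, the PBW theorem applied to $\g = \g^< \oplus \g^\geq$ yields a super vector space isomorphism $\bU(\g) \cong \bU(\g^<) \otimes_\C \bU(\g^\geq)$ that is also an isomorphism of right $\bU(\g^\geq)$-modules; the analogous statement holds for each $\g_n$. Consequently, for any $\g^\geq$-module $Y$ one has a natural identification $K^+(Y) \cong \bU(\g^<) \otimes_\C Y$ of super vector spaces, and similarly $K_n^+(X_n) \cong \bU(\g_n^<) \otimes X_n$.

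Next, I would construct a $\g$-equivariant map
\[
\phi : \varinjlim_n K_n^+(X_n) \longrightarrow K^+(\varinjlim_n X_n).
\]
Let $f_{n,\infty} : X_n \to \varinjlim_m X_m$ denote the structural map, which is $\g_n^0$-equivariant by construction. Since $\g_n^>$ acts trivially on $X_n$ and, via the inclusion $\g_n^> \subset \g^>$, also trivially on $\varinjlim_m X_m$, the map $f_{n,\infty}$ is in fact $\g_n^\geq$-equivariant. By the universal property of induction it extends uniquely to a $\g_n$-module homomorphism $\phi_n : K_n^+(X_n) \to K^+(\varinjlim_m X_m)$ sending $u \otimes v \mapsto u \otimes f_{n,\infty}(v)$. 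The compatibility $\phi_{n+1} \circ f_n = \phi_n$ (where $f_n$ also denotes the induced transition map $K_n^+(X_n) \to K_{n+1}^+(X_{n+1})$) follows from $f_{n+1,\infty} \circ f_n = f_{n,\infty}$, so the $\phi_n$ assemble into the desired $\g$-equivariant map $\phi$.

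It remains to verify that $\phi$ is a bijection. Via the PBW identifications above, $\phi$ is identified with the canonical map
\[
\varinjlim_n \bigl( \bU(\g_n^<) \otimes X_n \bigr) \longrightarrow \bU(\g^<) \otimes_\C \varinjlim_n X_n.
\]
Since $\g^< = \varinjlim \g_n^<$ as Lie superalgebras and $\bU$ commutes with direct limits, $\bU(\g^<) = \varinjlim \bU(\g_n^<)$; since the tensor product of vector spaces commutes with direct limits, this canonical map is an isomorphism of super vector spaces. Hence $\phi$ is a $\g$-module isomorphism. I do not foresee any serious obstacle: the main technical point is the compatibility of PBW with the direct-limit construction (together with careful bookkeeping of left-versus-right module structures in the PBW identification of $K^\pm$), and this goes through uniformly in both the $K^+$ and $K^-$ cases.
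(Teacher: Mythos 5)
Your proof is correct, and it builds the same comparison map as the paper does: in both arguments one uses the universal property (you phrase it via Frobenius reciprocity for induction, the paper via the universal property of the colimit) to produce the canonical $\g$-morphism $\varinjlim_n K_n^\pm(X_n)\to K^\pm(\varinjlim_n X_n)$ determined by $u\otimes v\mapsto u\otimes f_{n,\infty}(v)$. Where you genuinely differ is in the verification that this map is bijective. The paper argues by elements: surjectivity is immediate, and for injectivity it writes a kernel element as $\sum u_i\otimes v_i$ with the $u_i$ linearly independent and concludes that each $v_i$ maps to zero; strictly speaking this last inference needs a PBW normalization, since the tensor product defining $K^\pm$ is over $\bU(\g^{\geq})$ (resp.\ $\bU(\g^{\leq})$) and linear independence of the $u_i$ in $\bU(\g)$ alone does not control relations in $\bU(\g)\otimes_{\bU(\g^{\geq})}X$. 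Your route makes exactly this point explicit: the identification $K^+(Y)\cong \bU(\g^<)\otimes_{\C}Y$ (and $K^-(Y)\cong \bU(\g^>)\otimes_{\C}Y$) as left modules over the complementary factor, compatible with the transition maps, reduces everything to the statement that $\bU(-)$ and $\otimes_\C$ commute with direct limits. This is cleaner, supplies the justification the paper leaves implicit, and avoids element-chasing; the paper's argument, in turn, is shorter to state and does not require tracking the compatibility of the PBW identifications with the maps $f_n$ and $\phi_n$, which is the one bookkeeping point your write-up correctly flags and handles.
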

\begin{proof}
Let $\varphi_{i,j}:X_i \hookrightarrow X_j$ be the family of embeddings defining $X:=\varinjlim X_i$, and let $\varphi_i:X_i\hookrightarrow X$ such that $\varphi_i = \varphi_j\circ \varphi_{i,j}$ for all $i\leq j$. By definition, $\varinjlim K_i^\pm(X_i)$ is defined through the embeddings $\widetilde{\varphi}_{i,j}:=\iota_{i,j}\otimes \varphi_{i,j}$, where $\iota_{i,j}:\bU(\g_i)\hookrightarrow \bU(\g_j)$ are the canonical inclusions. Notice also that $\widetilde{\varphi}_i = \iota_i\otimes \varphi_i$. Define $$\phi_i:\bU(\g_i)\otimes X_i \hookrightarrow \bU(\g)\otimes X$$ by $\phi_i (u\otimes v) = u \otimes [v]$, where $[v]$ is a representative of the class of $v$ in $X$. Notice that $$\phi_j\circ \widetilde{\varphi}_{i,j}(u\otimes v) = \phi_j(u\otimes \varphi_{i,j}(v)) = u\otimes [\varphi_{i,j}(v)] = u\otimes [v] = \phi_j(u\otimes v).$$ Thus, by the universal property of $\varinjlim K_i^\pm(X_i)$, there exists a unique homomorphism of $\g$-modules $$\sigma:\varinjlim K_i^\pm(X_i)\to K^\pm(\varinjlim X_i)$$ such that $\sigma\circ \widetilde{\varphi}_i = \phi_i$ for every $i$. Then, for any $u\otimes [v]\in K^\pm(\varinjlim X_i)$, there is $i\gg0$ for which $u\otimes [v] = \phi_i(u\otimes v) = \sigma\circ\widetilde{\varphi}_i(u\otimes v)$. Hence $\sigma$ is surjective. 

On the other hand, let $w\in \ker \sigma$. Then we can write $w = \widetilde{\varphi}_i(\sum u_i\otimes v_i)$ for linearly independent elements $u_i\in \bU(\g_i)$ for $i\gg 0$. In particular, $$0 = \sigma(w)=\sigma\circ\widetilde{\varphi}_i(\sum u_i\otimes v_i) = \phi_i(\sum u_i\otimes v_i) = \sum u_i\otimes [v_i],$$ and since the $u_i$ are linearly independent, this implies $[v_i]=0$ for every $i$. But $\varphi_i$ is an embedding for every $i$, which implies $v_i=0$, and hence $w=0$. Therefore $\sigma$ is an isomorphism.
\end{proof}

We define a $\Z$-grading on  $X\in \bbTgl$ as follows. Notice that $X$ has  a weight decomposition $X=\bigoplus_{\mu\in \h^*}X_\mu$ and
$$
\Supp X \subset\left\{\mu=\sum\mu_i\varepsilon_i \, \middle\vert \,
\mu_i\in \Z \text{ for all } i\in I, \text{ and } \mu_i = 0 \text{ for all but finitely many } i \right\}.
$$
Indeed, this follows from the fact that the weights of the modules $V^{\otimes m}\otimes V^{\otimes n}_*$ are all of this form.  For any such $X$, we define a $\Z$-grading $X=\bigoplus_{k\in \Z} X^k$ by letting $$X^k := \bigoplus_{\mu\in \h^*,\ |\mu|=k} X_\mu.$$  Notice that each $X^k$ is invariant under the action of $\g^0$. In particular, if $X$ simple, then $X=X^k$ for some $k\in \Z$, and in this case, we will write $|X|:=k$. For example, $|V_{\lambda, \mu}|=\sum \lambda_i-\sum \mu_j$.

\begin{prop}\label{prop:JH.Kac.g_0-module}
Let $X\in \bbTgl$ such that $|X|=k$. The module $K^\pm(X)$ considered as a $\g^0$-module admits a   decomposition  
\begin{equation}\label{eq: grading K}
K^+(X)=\bigoplus_{j\leq 0} T^j_+, \hspace{1cm}
K^-(X)=\bigoplus_{j\geq 0} T^j_-
\end{equation}
such that each $T^j_\pm$ is in $\bbTgl$   and each simple subquotient $Y$ of $T^j_\pm$ satisfies $|Y| =k+j$. The decomposition  
in (\ref{eq: grading K}) is a $\Z$-grading of the $\g$-module $K^\pm(X)$.
\end{prop}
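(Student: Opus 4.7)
My plan is to exploit the PBW theorem to realize $K^\pm(X)$ as $\Z$-graded $\g^0$-modules explicitly, and then verify that each graded piece lies in $\bbTgl$ and has simple subquotients of the correct $|\cdot|$-size. The $\Z$-grading on $\g$ extends canonically to $\bU(\g)$, and placing $X$ in degree $k := |X|$ induces $\Z$-gradings on $K^+(X)$ and $K^-(X)$ that are compatible with the $\g$-action (since left multiplication by $\g^\ell$ raises the $\bU(\g)$-grading by $\ell$, and this property descends to the induced modules). By PBW we have vector-space isomorphisms $K^+(X) \cong \bU(\g^<) \otimes X$ and $K^-(X) \cong \bU(\g^>) \otimes X$, which are in fact $\Z$-graded $\g^0$-module isomorphisms since the adjoint $\g^0$-action preserves each factor. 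As $\g^< = \g^{-1}$ is purely odd and abelian, $\bU(\g^{-1}) \cong \Lambda(V_*)$ with $\Lambda^n(V_*)$ in $\Z$-degree $-n$, yielding $T^{-n}_+ = \Lambda^n(V_*) \otimes X$ for $n \geq 0$. For $K^-$, the PBW filtration on $\bU(\g^>)$ is compatible with the $\Z$-grading, and its associated graded is $\bigotimes_{\ell \geq 1} R(\g^\ell)$, where $R(\g^\ell) = \Sym(\g^\ell)$ for $\ell$ even and $R(\g^\ell) = \Lambda(\g^\ell)$ for $\ell$ odd; the $\Z$-degree $j$ piece is then a finite sum indexed by tuples $(a_\ell)$ with $\sum_\ell \ell a_\ell = j$ (i.e., partitions of $j$), of tensor products of super-symmetric powers of the $\g^\ell$.

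To confirm $T^j_\pm \in \bbTgl$, I would use that each $\g^\ell \cong \Lambda^{\ell+1}(V) \otimes V_*$ lies in $\bbTgl$ as a subquotient of $V^{\otimes(\ell+1)} \otimes V_*$, and that symmetric and exterior powers and tensor products of such objects embed into tensor powers of $V$ and $V_*$, so remain in $\bbTgl$ by Schur--Weyl duality and Theorem~\ref{th:PSt}. For $\bU(\g^>)^j$ the PBW filtration is finite of length at most $j$ (each generator contributes $\Z$-degree at least $1$), so $\bU(\g^>)^j$ lies in $\bbTgl$ by closure under extensions, and tensoring with $X$ gives $T^j_- \in \bbTgl$. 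For the simple-subquotient claim, each root vector $\xi_{i_1}\cdots \xi_{i_{\ell+1}} \partial_s \in \g^\ell$ has weight $\varepsilon_{i_1}+\cdots+\varepsilon_{i_{\ell+1}} - \varepsilon_s$, which has $|\cdot|$-size $\ell$; hence every PBW monomial of $\bU(\g)$-degree $j$ contributes weights of $|\cdot|$-size $j$, and combined with $|X|=k$ every weight of $T^j_\pm$ has $|\cdot|$-size $k+j$. Since every simple $V_{\lambda,\mu} \in \bbTgl$ has all its weights of equal $|\cdot|$-size $|V_{\lambda,\mu}|$, each simple subquotient $Y$ of $T^j_\pm$ must satisfy $|Y| = k+j$.

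The hard part will be the $K^-$ case, specifically verifying that $\bU(\g^>)^j \otimes X$ has \emph{finite length} in $\bbTgl$ (integrability and the l.a.c.\ are automatic from the tensor-product structure); this reduces to combinatorial control of the PBW filtration via the finite number of partitions of $j$, combined with the structural input of Theorem~\ref{th:PSt} guaranteeing that finite tensor products of tensor modules have finite length.
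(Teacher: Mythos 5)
Your proposal is correct and follows essentially the same route as the paper: identify $K^+(X)\cong\Lambda(V_*)\otimes X$ and $K^-(X)\cong \bU(\g^>)\otimes X$ as $\g^0$-modules via PBW, decompose by the induced $\Z$-degree, check each piece is a tensor module, and read off $|Y|=k+j$ from the fact that root vectors in $\g^\ell$ have weights of size $\ell$ (the paper phrases this via the operator $I_N=\sum_{i\le N}\xi_i\partial_i$ acting by $k+j$). The only cosmetic difference is that for $K^-$ you keep $\bU(\g^>)$ and use the finite PBW filtration in each degree, while the paper passes directly to the super-symmetric algebra $S(\g^>)=\bigoplus_j S^j$ and sets $T^j_-=S^j\otimes X$; both yield the same finite-length verification.
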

\begin{proof}
Let us first prove the claim for $K^-(X)$. Since $\g^> = \bigoplus_{i>0} \g^i$ is graded, we can decompose the symmetric $\g^0$-module $S(\g^>)$ as $S(\g^>) = \bigoplus_{j>0} S^j$, where $S^j$ is the projection of $\bigoplus_{i_1+\cdots + i_n=j} \g^{i_1}\otimes\cdots \otimes \g^{i_n}$ onto $S(\g^>)$. So we have an isomorphism of $\g^0$-modules
	\[ 
K^-(X)\cong S(\g^>)\otimes X\cong \bigoplus_{j\geq 0} \left(S^j\otimes X\right).
	\]
For each $j\geq 0$, set $T^j_- := S^j\otimes X$, and observe that $T^j_-$ is in $\bbTgl$ since $S^j\in\bbTgl$. Notice now that any $\mu \in \Supp S^j$ satisfies $|\mu| = \sum \mu_i = j$. Thus, for $N\gg 0$, the element  $I_N := \sum_{i=1}^N \xi_i\partial_i$ acts on any weight vector of $T_-^j$ (and hence on any weight vector of a subquotient of $T^j_-$) via multiplication by $k+j$.  

Now for $K^+(X)$, we note that we have isomorphisms of $\g^0$-modules
    \[ 
K^+(X)\cong \Lambda(\g^<)\otimes X\cong \bigoplus_{j\geq 0} \Lambda^j(V_*)\otimes X,
    \]
where each component $T^{-j}_+ :=\Lambda^j(V_*)\otimes X$ is in $\bbTgl$, and hence has finite length over $\g^0$.   Moreover, for $N\gg 0$, the element $I_N$ acts on any weight vector of $(\Lambda^j(V_*)\otimes X)$  via multiplication by $k-j$, and the claim follows for $K^+(X)$.

Now it is clear that $\g^i T_{\pm}^j \subset T_{\pm}^{i+j}$, so the $\Z$-grading is compatible with the action of $\g$.
\end{proof}

\begin{cor}
If $X\in\bbTgl$ is simple, then the module $K:=K^\pm (X)$ admits a  $\g^0$-module filtration
	\[
K = K_0 \supset K_1 \supset K_2 \supset K_3  \cdots,
	\]
where $\bigcap_i K_i = 0$ and $K_{i-1}/K_i$ is simple for all $i>0$. Moreover, the multiplicity of any simple subquotient of $K$ is finite and  does not depend on the choice of a filtration.
\end{cor}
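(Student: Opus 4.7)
The plan is to exploit Proposition~\ref{prop:JH.Kac.g_0-module}, which decomposes $K=K^\pm(X)$ as a $\g^0$-module into a direct sum of pieces $T^j_\pm$, each lying in $\bbTgl$ and hence of finite length.

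First, I would introduce a coarse descending $\g^0$-filtration by truncating the direct sum: focusing on $K^-(X)$ and setting $F_n := \bigoplus_{j\geq n} T^j_-$, one has $F_0=K$, $F_n/F_{n+1}\cong T^n_-$, and $\bigcap_n F_n=0$ since every element of $K$ lies in only finitely many summands. The case of $K^+(X)$ is analogous using $F_n := \bigoplus_{j\leq -n} T^j_+$. Refining each step $F_n/F_{n+1}$ via a composition series (available because each $T^n_\pm$ has finite length in $\bbTgl$) and splicing these together yields the desired filtration $K=K_0\supset K_1\supset K_2\supset\cdots$ with simple subquotients, and $\bigcap_i K_i=0$ because this intersection is contained in $\bigcap_n F_n$.

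For the multiplicity statement, Proposition~\ref{prop:JH.Kac.g_0-module} guarantees that every simple subquotient $L$ of $T^j_\pm$ satisfies $|L|=k+j$ with $k=|X|$. Since each simple tensor module $V_{\lambda,\mu}$ is homogeneous, concentrated in the single grade $|\lambda|-|\mu|$, a given simple $L$ can occur only in the unique summand $T^{|L|-k}_\pm$, and the multiplicity $[T^{|L|-k}_\pm:L]$ is a Jordan--Hölder invariant of a finite-length object of $\bbTgl$, hence finite. The main obstacle I anticipate is establishing independence from the choice of filtration, since $K$ itself is not of finite length. I would address this by noting that $K$ is an $\h_\fgl$-weight module by Remark~\ref{rem:MIntbbTglIsWeightModule}, so every $\g^0$-submodule $K_i$ is a sum of weight spaces and thus respects the direct sum decomposition $K=\bigoplus_j T^j_\pm$. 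Intersecting with $T^j_\pm$ yields descending filtrations $T^j_\pm = K_0\cap T^j_\pm \supseteq K_1\cap T^j_\pm \supseteq \cdots$ with zero intersection, and each subquotient $(K_{i-1}\cap T^j_\pm)/(K_i\cap T^j_\pm)$ is the grade-$(k+j)$ part of the simple module $K_{i-1}/K_i$; by homogeneity this part is either zero or all of $K_{i-1}/K_i$. Jordan--Hölder applied inside the finite-length module $T^j_\pm$ then forces $\#\{i : K_{i-1}/K_i\cong L\}=[T^{|L|-k}_\pm:L]$, independent of the chosen filtration.
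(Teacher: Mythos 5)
Your construction---truncating the direct sum from Proposition~\ref{prop:JH.Kac.g_0-module} into the coarse filtration $F_n$ and refining each layer $T^j_\pm$ by a composition series---is exactly the spliced filtration used in the paper's proof, so the existence part is correct and essentially identical. Your weight-space argument for the multiplicity claims (every $\g^0$-submodule of $K$ splits along the grading, so a simple $L$ can only contribute in the finite-length summand $T^{|L|-k}_\pm$, forcing the multiplicity to equal $[T^{|L|-k}_\pm:L]$ for any admissible filtration) is also sound and in fact supplies the independence-of-filtration details that the paper leaves implicit.
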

\begin{proof} 
For each $j\geq 0$, let $T^j = F_j^{0}\supset F_j^1\supset \cdots \supset F_j^{n_j}$ be the Jordan--H\"older series of $T^j$ provided by Proposition~\ref{prop:JH.Kac.g_0-module}. Now, observe that
\begin{multline*}
K_1 := F_1^1 \oplus \bigoplus_{j>1} T^j \supset K_2 := F_1^2 \oplus \bigoplus_{j>1} T^j \supset\cdots\supset K_{n_1}:= F_1^{n_1} \oplus \bigoplus_{j>1} T^j 
\supset \\
\supset K_{n_1+1} := \bigoplus_{j>1} T_j \supset K_{n_1+2} := F_2^1\oplus \bigoplus_{j>2} T_j \supset\cdots
\end{multline*}
yields the required filtration.
\end{proof}

\begin{prop} 
If $X\in\bbTgl$, then any simple $\g^0$-module in the Jordan--H\"older series of the induced module $K^\pm (X)$, considered as a $\g^0$-module, has finite multiplicity.
\end{prop}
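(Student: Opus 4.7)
The plan is to reduce to the case already handled by Proposition~\ref{prop:JH.Kac.g_0-module} via a weight-sum decomposition of $X$, and then finish by a degree-counting argument.

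First, I would decompose $X$ using the weight-sum $|\mu|:=\sum_i\mu_i$. Since every root $\varepsilon_i-\varepsilon_j$ of $\g^0$ has weight-sum zero, any $\h_\fgl$-weight $\g^0$-module splits as $X=\bigoplus_{k\in\Z}X^{(k)}$, where $X^{(k)}$ is the sum of those weight spaces $X_\mu$ with $|\mu|=k$. Each $X^{(k)}$ is a $\g^0$-submodule, hence lies in $\bbTgl$; and since $X\in\bbTgl$ has finite length, only finitely many $X^{(k)}$ are nonzero. Each nonzero $X^{(k)}$ is homogeneous in the sense that all its weights have weight-sum $k$.

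Next, since $K^\pm(-)$ is exact and commutes with direct sums, $K^\pm(X)=\bigoplus_k K^\pm(X^{(k)})$. The proof of Proposition~\ref{prop:JH.Kac.g_0-module} never uses the simplicity of $X$, only the homogeneity $|X|=k$; so it applies verbatim to each $X^{(k)}$ to give $\g^0$-module decompositions
\[
K^-(X^{(k)})=\bigoplus_{j\geq 0}T^{k,j}_-\quad\text{and}\quad K^+(X^{(k)})=\bigoplus_{j\geq 0}T^{k,-j}_+,
\]
where each summand $T^{k,\pm j}_\pm$ is a finite-length module in $\bbTgl$ whose simple subquotients $Z$ all satisfy $|Z|=k\pm j$.

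Finally, for a given simple $\g^0$-module $Y$ with $|Y|=\ell$, the degree constraint forces $Y$ to appear as a subquotient only in the single piece $T^{k,\pm(\ell-k)}_\pm$ of $K^\pm(X^{(k)})$. Therefore, the multiplicity of $Y$ in $K^\pm(X)$ is a finite sum---over the finitely many $k$ for which $X^{(k)}\neq 0$---of the finite multiplicities $[T^{k,\pm(\ell-k)}_\pm:Y]$, hence is finite. The only point that deserves a careful check is that Proposition~\ref{prop:JH.Kac.g_0-module} really does extend to non-simple but homogeneous $X$, which is immediate by inspection of its proof; I foresee no further obstacle.
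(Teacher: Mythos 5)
Your argument is correct, but it takes a genuinely different route from the paper's. The paper first treats simple $X$, where Proposition~\ref{prop:JH.Kac.g_0-module} applies as stated and the degree constraint confines a given simple subquotient to a single finite-length summand $T^j_\pm$, and then removes the simplicity hypothesis by induction on the length of $X$: it applies the exact functor $K^\pm(-)$ to a short exact sequence $0\to Y\to X\to X/Y\to 0$ and uses additivity of multiplicities. You instead decompose $X$ itself into its degree-homogeneous components (your $X^{(k)}$ are exactly the paper's $X^k$), note that only finitely many are nonzero by finite length, observe that the proof of Proposition~\ref{prop:JH.Kac.g_0-module} uses only the homogeneity $X=X^k$ and never simplicity, and conclude in one step by the same degree-counting argument. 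Your reading of that proposition is legitimate: although the notation $|X|=k$ is introduced for simple $X$, the proof only needs all weights of $X$ to have weight-sum $k$, and the paper itself later applies the proposition to the (generally non-simple) components $X^k$ when fixing the $\Z$-grading on $K^\pm(X)$. What your route buys is that it avoids the induction and the slightly informal additivity-of-multiplicity step for infinite-length $\g^0$-modules; what the paper's route buys is that it invokes Proposition~\ref{prop:JH.Kac.g_0-module} exactly as stated. One small notational slip: the summand of $K^\pm(X^{(k)})$ that can contain a simple $Y$ with $|Y|=\ell$ is in both cases the one whose second index is $\ell-k$ (namely $T^{k,\ell-k}_-$ when $\ell\geq k$, and $T^{k,\ell-k}_+$ when $\ell\leq k$), rather than $T^{k,-(\ell-k)}_-$; this does not affect the argument.
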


\begin{proof}
If $X$ is simple, then any simple $\g^0$-subquotient $M$ of $K^\pm(X)$ appears in a unique component $T^j_\pm$ (see the proof of Proposition~\ref{prop:JH.Kac.g_0-module} for the definition of $T^j_\pm$) which is determined by $|M|$. Hence, $M$ must have finite multiplicity since $T^j_\pm$ has finite length.

Now we use induction on the length of $X$ in its Jordan--Hölder series. Let $Y$ be a maximal submodule of $X$ and consider the short exact sequence
    \[
0\to Y\to X\to X/Y\to 0
    \]
Applying the exact functor $K^\pm(-)$ on this sequence yields
    \[
0\to K^\pm(Y)\to K^\pm(X)\to K^\pm(X/Y)\to 0.
    \]
By induction, a simple $\g^0$-subquotient $M$ has finite multiplicity in  $ K^\pm(Y)$  and in $K^\pm(X/Y)$. Since the multiplicity of $M$ is additive, the result follows.
\end{proof}

Given $X\in \bbTgl$, we fix the $\Z$-grading of  $K^\pm(X)$ defined by applying Proposition~\ref{prop:JH.Kac.g_0-module} to each component $X^k$ of the decomposition $X=\bigoplus_{k\in \Z} X^k$ with $|X^k|=k$.  

\begin{theo}\label{prop:Kac.in.T_W} If $X\in \bbTgl$, then  $K^+(X)$ lies in $ \bbT_W^\geq$ and  $K^-(X)$ lies in  $\bbT_W^{\leq}$.
\end{theo}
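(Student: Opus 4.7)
The approach is to verify the three conditions of Definition~\ref{def:T_W-Cat} for $K^\pm(X)$. Conditions~(1) and~(2) require essentially no new work: the $\Z$-grading was fixed explicitly in the paragraph preceding the theorem using Proposition~\ref{prop:JH.Kac.g_0-module}, and that same proposition shows that each graded piece $T^j_\pm$ lies in $\bbTgl$; since objects of $\bbTgl$ are integrable over $\g^0$ and this property is preserved under direct sums, condition~(2) follows. The substance of the proof is condition~(3), the l.a.c.

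To verify the l.a.c.\ for $\g^\leq$ on $K^-(X)$, I would use the PBW isomorphism $K^-(X)\cong \bU(\g^>)\otimes X$ of $\C$-vector spaces, reducing the problem to an elementary tensor $\bar s\otimes x$. The plan is to choose $n\in\Zp$ large enough that (i) every root vector appearing in the expansion of $\bar s$ has all of its indices strictly less than $n$, and (ii) $\ft'_n\subseteq \Ann_{\fgl(\infty)}(x)$, which is possible because $X\in\bbTgl$ satisfies the l.a.c.\ for $\fgl(\infty)$. Then I would check that every generator of $\ft_n\cap\g^\leq$---that is, each $\partial_j$ with $j\geq n$ and each $\xi_i\partial_j$ with $i,j\geq n$---annihilates $\bar s\otimes x$ by expanding
\[
y\cdot(\bar s\otimes x) = (-1)^{|y||\bar s|}\,\bar s\otimes(y\cdot x) + [y,\bar s]\otimes x.
\]
The first summand vanishes: if $y=\partial_j\in\g^<$ then $y\cdot x=0$ by the induced module structure, and if $y=\xi_i\partial_j$ then $y\cdot x=0$ by the choice of $n$. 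For the second summand, the super-Leibniz rule reduces the question to showing $[y,z]=0$ for each root vector $z$ appearing as a factor of $\bar s$; a direct inspection of formula~\eqref{bracket form} shows that both Kronecker deltas on the right-hand side vanish when all indices of $y$ are $\geq n$ and all indices of $z$ are $<n$.

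The argument for $K^+(X)\in\bbT_W^\geq$ will be entirely parallel, using the PBW isomorphism $K^+(X)\cong \Lambda(\g^<)\otimes X$ and treating tensors of the form $\partial_{j_1}\wedge\cdots\wedge\partial_{j_r}\otimes x$. One chooses $n$ larger than each $j_s$ and past the l.a.c.\ index for $x$; the elements of $\ft_n\cap\g^\geq$ are then spanned by $\xi_{i_1}\cdots\xi_{i_k}\partial_j$ with $k\geq 1$ and all indices $\geq n$. The action of such an element on $x$ vanishes either because $\g^>$ acts trivially on $X$ (when $k\geq 2$) or by the chosen bound on $x$ (when $k=1$, so that $y\in\ft'_n$), and the bracket with each $\partial_{j_s}$ vanishes by the same index comparison as above.

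No step in this verification is conceptually deep; the main obstacle is simply organizing the PBW expansion together with the signs and deltas in~\eqref{bracket form} so that the commutators are seen to vanish for $n$ sufficiently large. The proof rests on two inputs: the l.a.c.\ inherited from $\bbTgl$ on the $\fgl(\infty)$-module $X$, and the fact that $X$ is extended to $\g^\geq$ (respectively $\g^\leq$) by letting $\g^>$ (respectively $\g^<$) act trivially.
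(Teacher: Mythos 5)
Your proposal is correct, and it handles conditions (1) and (2) exactly as the paper does, by invoking Proposition~\ref{prop:JH.Kac.g_0-module} for each graded piece $X^k$. For the essential point, condition (3), you take a genuinely more hands-on route than the paper. The paper avoids any PBW bookkeeping: it notes that the adjoint module $\g$ satisfies the l.a.c.\ for $\g$, deduces that $\bU(\g)_{\ad \g^\geq}$ and hence $\bU(\g)_{\ad \g^\geq}\otimes_\C X$ satisfy the l.a.c.\ for $\g^\geq$ (using that $X$ does, since $\ft_n\cap\g^\geq=\ft_n'\oplus(\ft_n\cap\g^>)$ with $\g^>$ acting trivially), and then transports the l.a.c.\ along the surjective $\g^\geq$-module map $u\otimes x\mapsto u\otimes x$ from $\bU(\g)_{\ad\g^\geq}\otimes_\C X$ onto $K^+(X)$, since the l.a.c.\ passes to quotients. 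You instead fix the PBW identification $K^-(X)\cong\bU(\g^>)\otimes X$ (resp.\ $K^+(X)\cong\Lambda(\g^<)\otimes X$), choose $n$ past all indices occurring in the first tensor factor and past the l.a.c.\ bound for $x$, and check directly that the spanning elements $\partial_j$, $\xi_i\partial_j$ (resp.\ $\xi_{i_1}\cdots\xi_{i_k}\partial_j$, $k\geq 1$) of $\ft_n\cap\g^{\leq}$ (resp.\ $\ft_n\cap\g^{\geq}$) kill an elementary tensor, using the super-Leibniz rule together with the vanishing of both Kronecker deltas in~\eqref{bracket form} when the indices of $y$ all exceed those of the factors of $\bar s$, and the triviality of the $\g^<$ (resp.\ $\g^>$) action together with $\ft_n'\subseteq\Ann_{\g^0}(x)$ for the term hitting $x$. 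Your index computations are the same ones that secretly underlie the paper's statement that the adjoint module satisfies the l.a.c., so the two arguments rest on the same phenomenon; the paper's version is shorter and reusable (it isolates closure of the l.a.c.\ under tensor products and images), while yours is self-contained and makes the annihilating subalgebra completely explicit, at the cost of more sign/index bookkeeping, which you carry out correctly.
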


\begin{proof}
Write $X=\bigoplus_{k\in \Z} X^k$ where $|X^k|=k$. Then, by  Proposition~\ref{prop:JH.Kac.g_0-module} applied to each $X^k$ we have that $K^\pm(X)$ satisfies (1) and (2) of Definition~\ref{def:T_W-Cat}.
So we just need to show that  $K^+(X)$ satisfies l.a.c. for $\g^{\geq}$ and that  $K^-(X)$ satisfies l.a.c. for $\g^{\leq}$.

Let $\fk$ be a subalgebra of $\g$, and let $\bU(\g)_{\ad \fk}$ denote $\bU(\g)$ viewed as a $\fk$-module via the adjoint action. Notice that $\g$ viewed as a $\g$-module (via the adjoint action) satisfies l.a.c. for $\g$ (and hence for $\g^{\geq}$). Thus $\bU(\g)_{\ad \g^\geq}$, $X$ (viewed as a $\g^{\geq}$-module with $\g^>\cdot X=0$) and $\bU(\g)_{\ad \g^\geq}\otimes_\C X$ also satisfy the l.a.c. for $\g^\geq$. Consider the surjective map of vector spaces $\bU(\g)_{\ad \g^\geq}\otimes_\C X  \xrightarrow{\phi} K^+(X)$ given by $\phi(u\otimes x)= u\otimes x$ for all $x\in X$, $u\in \bU(\g)$. This is a $\g^{\geq}$-module homomorphism since for all $g \in \g^{\geq}$ and $x\in X$, we have 
    \[
\phi(g(u\otimes x))=\phi\left([x,u]\otimes x +(-1)^{\bar{u}\bar{g}} u\otimes gx\right) = [x,u]\otimes x +(-1)^{\bar{u}\bar{g}} u\otimes gx
    \]
and $g\phi(u\otimes x)=gu\otimes x=[g,u]\otimes x+(-1)^{\bar{u}\bar{g}}u\otimes gx$. Thus the result follows for $K^+(X)$. The proof for $K^-(X)$ is similar.
\end{proof}

\begin{rem}\label{rem: K lac}
The induced $\g$-module $K^+(X)$ is not in $\bbT_W^{\leq}$ or $\bbT_W$, since it does not satisfy the l.a.c. for $\g^\leq$. Similarly, $K^-(X)$ is not in $\bbT_W^{\geq}$ or $\bbT_W$, since it does not satisfy the l.a.c. for $\g^\geq$.
\end{rem} 

%%%%%%%%%%%%%%%%
%
\section{Simple modules in $\bbT_W$}
%
%%%%%%%%%%%%%%%%

In this section we prove that the simple modules of $\bbT_W$ can be parameterized by two partitions, which we denote by $L^-_{\lambda,\mu}$, and that they are highest weight modules with respect to the Borel subalgebra $\fb(\prec)^{\min}$.

Given a Borel subalgebra $\fb^0$   of $\g^0=\fgl(\infty)$, we define  Borel subalgebras for $\g$  by
$$\fb^{\max}:=\fb^0\oplus\g^>, \hspace{1cm} \fb^{\min}:=\fb^0\oplus\g^<.$$ Similarly, for a Borel subalgebra $\fb_n^0$ of  $\g_n^0=\fgl(n)$, we define Borel subalgebras of $\g_n$ by $\fb_n^{\max}:=\fb_n^0\oplus\g_n^>$ and $\fb_n^{\min}:=\fb_n^0\oplus \g_n^<$.

For any weight $\gamma\in \h^*$, we denote by $V_{\fb^0}(\gamma)$   the simple $\fb^0$-highest weight $\g^0$-module with highest weight $\gamma\in\h^*$.  Similarly, $V_{\fb^0_n}(\gamma)$  denotes the simple $\fb^0_n$-highest weight $\g^0_n$-module with highest weight $\gamma\in\h_n^*$, where $\h_n:=\h\cap\g_n^0$.

For a Borel subalgebra $\fb \in \{\fb^{\max}, \fb^{\min}\}$ of $\g$, let $\C_\gamma$ denote the one-dimensional representation of $\fb$ defined by $\gamma\in\h^*$. The induced module $M_{\fb}(\gamma):=U(\g)\otimes_{U(\fb)} \C_{\gamma}$ is a $\fb$-highest weight $\g$-module. We let $L_{\fb}(\gamma)$ denote the unique simple quotient of $M_{\fb}(\gamma)$.

Recall the Borel subalgebra $\fb(\prec)^0$ of $\g^0$ corresponding to the linear order $\prec$ on $I$ given in (\ref{prec def}).
We let $\fb(<)^0 :=\h\oplus \n(<)$  denote the standard Borel corresponding to the natural order on $I$.
In what follows we set 
	\[
\fb(\prec)_n^0:=\fb(\prec)^0\cap \fgl(n), \text{ and } \fb(<)_n^0:=\fb(<)^0\cap \fgl(n).
	\]

\begin{rem}\label{rem Ser W}
Serganova studied  induced  $W(n)$-modules in \cite{S05}, where she proved that the $\g_n$-module $K_n^+( V_{\fb(<)_n^0}(\nu))$ is simple if and only if the weight $\nu$ is typical \cite[Theorem~6.3]{S05}, and has length $2$ when $\nu$ is atypical \cite[Corollary~7.6]{S05}.  By Theorem~5.3 in \cite{S05}, the atypical weights for the standard Borel subalgebra $\fb(<)_n^{\max}$ are of the form $a\varepsilon_i+\varepsilon_{i+1}+\cdots +\varepsilon_n$, for some $a\in\C$.
\end{rem}

Recall that $V_{\lambda, \mu}$ denotes the simple module in $\bbTgl$ corresponding to partitions $\lambda,\mu$ (see Theorem~\ref{th:PSt}), that the induced $\g$-module $K^+(V_{\lambda, \mu})$ is a  $\fb(\prec)^{\max} := \fb(\prec)^0\oplus \g^{>}$ highest weight module with highest weight 
$(\lambda,\mu)$, and  $K^-(V_{\lambda, \mu})$ is a  $\fb(\prec)^{\min}:=\fb(\prec)^0\oplus \g^{<}$ highest weight module with highest weight 
$(\lambda,\mu)$.

By our convention, $(0)$ represents the empty partition $\emptyset$.

\begin{theo}\label{prop:K+.simple}
The $W(\infty)$-module $K^+(V_{\lambda, \mu})$ is simple (and in fact, locally simple) if and only if  $(\lambda,\mu)\neq (\emptyset,(\mu_1))$, that is, if and only if $V_{\lambda, \mu}\not\cong S^k(V_*)$ for some $k\in\Z_{\geq 0}$.
\end{theo}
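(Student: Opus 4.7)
The strategy is to reduce to the finite-dimensional setting via Proposition~\ref{prop:kac.lim=lim.kac} and then apply Serganova's criterion from Remark~\ref{rem Ser W}. For each $n$, let $V_{\lambda,\mu}^{(n)}\subseteq V_{\lambda,\mu}$ denote the simple $\fgl(n)$-submodule generated by the $\fb(\prec)^0$-highest weight vector. Then $V_{\lambda,\mu}^{(n)}$ is the simple $\fgl(n)$-module with $\fb(\prec)_n^0$-highest weight $\chi_n := \sum_{2i-1\leq n}\lambda_i\varepsilon_{2i-1} - \sum_{2j\leq n}\mu_j\varepsilon_{2j}$, and $V_{\lambda,\mu}\cong \varinjlim_n V_{\lambda,\mu}^{(n)}$. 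Proposition~\ref{prop:kac.lim=lim.kac} then identifies
$$K^+(V_{\lambda,\mu})\cong \varinjlim_n K_n^+(V_{\lambda,\mu}^{(n)}).$$

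Next, I would translate to the standard Borel in order to apply Serganova's criterion. Let $w_n\in S_n$ be the Weyl element conjugating $\fb(\prec)_n^0$ into $\fb(<)_n^0$, i.e., the permutation that rearranges $1\prec 3\prec 5\prec \cdots\prec 6\prec 4\prec 2$ into $1<2<\cdots<n$. Then the $\fb(<)_n^0$-highest weight of $V_{\lambda,\mu}^{(n)}$ is the dominant rearrangement
$$w_n\chi_n = \sum_{i\geq 1}\lambda_i\varepsilon_i - \sum_{j\geq 1}\mu_j\varepsilon_{n-j+1},$$
with the $\lambda$-part occupying the initial positions and the $-\mu$-part in the terminal positions. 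By Remark~\ref{rem Ser W}, $K_n^+(V_{\lambda,\mu}^{(n)})$ fails to be simple precisely when $w_n\chi_n$ takes the atypical form $a\varepsilon_i+\varepsilon_{i+1}+\cdots+\varepsilon_n$. Since $\lambda_i,\mu_j\geq 0$ are partition entries, a case analysis on the distinguished index $i$ shows that (for $n\gg 0$) this atypical form is attained iff $\lambda=\emptyset$ and $\mu$ has at most one part, i.e., $(\lambda,\mu)=(\emptyset,(\mu_1))$; in that case $w_n\chi_n = -\mu_1\varepsilon_n$ matches $a\varepsilon_n$ with $a=-\mu_1$.

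When $(\lambda,\mu)\neq (\emptyset,(\mu_1))$, each $K_n^+(V_{\lambda,\mu}^{(n)})$ is simple for $n\gg 0$. A direct limit of simple modules along injective compatible morphisms is locally simple (and hence simple), by the standard argument that any proper $\g$-submodule $N$ intersects each $K_n^+$ in a $\g_n$-submodule, which must be $0$ (forcing $N=0$) by simplicity. Hence $K^+(V_{\lambda,\mu})$ is locally simple. Conversely, when $(\lambda,\mu)=(\emptyset,(\mu_1))$, each $K_n^+(V_{\lambda,\mu}^{(n)})$ has length $2$ for $n\gg 0$, with unique maximal proper submodule $N_n$. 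I would then assemble $N := \bigcup_n N_n$ into a proper nonzero $\g$-submodule of $K^+(V_{\lambda,\mu})$: properness holds because the $\fb(\prec)^0$-highest weight vector lies outside every $N_n$, and nontriviality follows once the compatibility $N_n\subseteq N_{n+1}$ under the embedding $K_n^+(V_{\lambda,\mu}^{(n)})\hookrightarrow K_{n+1}^+(V_{\lambda,\mu}^{(n+1)})$ is established.

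The main obstacle is precisely this compatibility in the atypical case: ensuring that the length-$2$ submodules $N_n$ propagate to a genuine nonzero submodule in the limit rather than collapsing. To handle it, I would identify $N_n$ concretely via Serganova's analysis and argue that $N_n$ cannot embed as a $\g_n$-submodule into the simple $\g_{n+1}$-quotient $L_{n+1}^+(V_{\lambda,\mu}^{(n+1)})=K_{n+1}^+/N_{n+1}$, for instance by comparing $\fb(\prec)_n^0$-highest weights of $N_n$ and $L_{n+1}^+(V_{\lambda,\mu}^{(n+1)})|_{\g_n}$; equivalently, this forces the image of $N_n$ in $K_{n+1}^+$ to lie entirely in $N_{n+1}$.
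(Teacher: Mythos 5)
Your overall skeleton is the same as the paper's: reduce to finite rank via Proposition~\ref{prop:kac.lim=lim.kac}, translate the $\fb(\prec)_n^0$-highest weight to the standard Borel by the Weyl group, apply Serganova's criterion from Remark~\ref{rem Ser W} to detect atypicality exactly when $(\lambda,\mu)=(\emptyset,(\mu_1))$, and conclude local simplicity in the typical case by the standard direct-limit argument; all of this matches the paper. Where you genuinely diverge is the atypical case: the paper does not assemble the finite-rank maximal submodules at all, but instead exhibits the explicit vector $\partial_2\otimes\partial_2^k\in K^+(S^k(V_*))$ and verifies directly from the bracket formula~(\ref{bracket form}) that it is $\fb(\prec)^{\max}$-primitive, hence generates a proper nonzero submodule. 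Your route through $N=\bigcup_n N_n$ is workable, and the compatibility $N_n\subseteq N_{n+1}$ that you rightly flag as the main obstacle can be closed, but the cleanest closure is in effect the paper's vector: with respect to $\prec$ the index $2$ is maximal for every $n$, the weight space of weight $-(k+1)\varepsilon_2$ in $K_n^+(S^k(V_n^*))\cong\Lambda(V_n^*)\otimes S^k(V_n^*)$ is one-dimensional, spanned by $\partial_2\otimes\partial_2^k$, and by Serganova's description of the length-two structure this vector is the $\fb(\prec)_n^{\max}$-highest weight vector of $N_n$, so $N_n=\bU(\g_n)(\partial_2\otimes\partial_2^k)$ for every $n$ and the $N_n$ are automatically nested, the embeddings of Proposition~\ref{prop:kac.lim=lim.kac} sending this vector to itself.

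Two cautions about your own sketch of that step. First, ``comparing $\fb(\prec)_n^0$-highest weights of $N_n$ and $L_{n+1}^+|_{\g_n}$'' needs care: $L_{n+1}^+|_{\g_n}$ is not a highest weight $\g_n$-module, and $\h_n$-weights do not see the $\varepsilon_{n+1}$-coefficient, so vectors such as the class of $\partial_2\partial_{n+1}\otimes\partial_2^k$ have the same restricted weight as the generator of $N_n$ (and are even $\fb(\prec)_n^{\max}$-primitive); one should therefore argue with the full $\h_{n+1}$-weight of the literal image of the generator, as above, rather than with restricted weights. Second, your embedding-versus-quotient dichotomy uses implicitly that $N_n$ is $\g_n$-simple, which does follow from the length-two statement but should be said. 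Granting these repairs, your argument is correct; what the paper's direct computation buys is that it needs nothing from the finite-rank theory beyond the simplicity criterion and avoids all limit bookkeeping, while your approach has the merit of identifying the maximal submodule of $K^+(S^k(V_*))$ as the limit of Serganova's finite-rank maximal submodules, which is close in spirit to the remark following the theorem about the maps $K^+(S^{k+1}(V_*))\to K^+(S^k(V_*))$.
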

\begin{proof}
Note that any simple $X\in \bbTgl$ is such that $X\cong  V_{\lambda, \mu}$ for some partitions $\lambda$, $\mu$ \cite{PSt11,DPS16}. In particular, $X\cong \varinjlim X_n$, where for $n\gg 0$, the $\g_n^0$-module $X_n$ is the simple $\fb(\prec)_n^0$-highest weight module 
$$
V_{\fb(\prec)_n^0}\left(\sum_{i\in \Zp}\lambda_i \varepsilon_{2i-1} -\sum_{i\in  \Zp}\mu_i\varepsilon_{2i}\right).
$$
In particular, using the Weyl group of $\g_n^0$, we conclude that
	\[
X_n \cong V_{\fb(<)_n^0}\left(\lambda_1\varepsilon_1 +\cdots + \lambda_k\varepsilon_k - \mu_l\varepsilon_{n-l}-\cdots -\mu_1\varepsilon_n\right).
	\]

It follows from Remark~\ref{rem Ser W} that  $K_n^+(X_n)$  is simple for $n > k+l$ if and only if its highest weight with respect to  $\fb(<)_n^{\max} $ is
not equal to  $-\mu_1\varepsilon_n$, that is, if $(\lambda,\mu)\neq (\emptyset,(\mu_1))$. Since $K^+(V_{\lambda, \mu})\cong \varinjlim K_n^+(X_n)$, we conclude that $K^+(V_{\lambda, \mu})$ is locally simple  when  $(\lambda,\mu)\neq (\emptyset,(\mu_1))$. 
For the second claim, note that $V_{\emptyset,(k)}= S^k(V_*)$.

If  $(\lambda,\mu)= (\emptyset,(k))$,  then  $V_{\emptyset,(k)}= S^k(V_*)$. The vector $1\otimes \partial_2^k$ is the $\fb(\prec)^{\max}$-highest weight vector of $K^+(S^k(V_*))$. We claim that the weight vector $\partial_2\otimes \partial_2^k\in K^+(S^k(V_*))$ is  $\fb(\prec)^{\max}$-primitive and generates a proper submodule. Indeed,  for any homogeneous element $ x:=\underline{\xi}^{\underline{a}} \partial_j\in \g^{\geq}$ we have 
$$
x(\partial_2\otimes \partial_2^k)=[x,\partial_2]\otimes \partial_2^k+ (-1)^{\bar x }\partial_2\otimes x\partial_2^k,
$$
and since $\partial_2^k$ is $\fb(\prec)^{\max}$-primitive, we just need to show that $[x,\partial_2]\otimes \partial_2^k$ is zero when $x\in\n(\prec)^0$ and $x\in \g^1$.
By (\ref{bracket form}), $[\underline{\xi}^{\underline{a}}\partial_j,\partial_2]$ is zero unless  $\underline{a}(2)=1$, and hence equals zero when  $x\in\n(\prec)^0$. Now consider $x_k:=\underline{\xi}^k \underline{\xi}^2 \partial_k\in\g_{\varepsilon_2}$, when $k\in\Z_{>0}$ and $k\neq 2$. Since $(\ad \partial_2)^2=0$ by the Jacobi identity, we get $h_k:=[x_k,\partial_2]\in\h_{-\varepsilon_2}$ and $\varepsilon_2(h_k)=0$. 
Hence,
$ [x_k,\partial_2]\otimes\partial_2^k=1\otimes h_k\partial_2^k=0
$, and the claim follows.
\end{proof} 

\begin{rem}
We can define a $\g$-module homomorphism $K^+(S^{k+1}(V_*))\to K^+(S^k
(V_*))$ by mapping $1\otimes\partial_2^{k+1}\mapsto \partial_2\otimes \partial_2^k$. It was proved for the finite-dimensional case in \cite{S05}, that the $\g_n$-module $K^+(S^{k}(V_n)^*)$ has length 2 and the analogous maps give a resolution for $K^+(S^k(V_n)^*)$.
\end{rem}

In contrast with Theorem~\ref{prop:K+.simple} we have the following. 

\begin{prop}\label{prop:K^-not.simple}
If $X$ is a $\fb(\prec)^0$-highest weight $\g^0$-module, then $K^-(X)$ is not simple. In particular, $K^-(V_{\lambda, \mu})$ is never simple for any simple $V_{\lambda, \mu}\in \bbTgl$.
\end{prop}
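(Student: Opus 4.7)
The plan is to exhibit an explicit $\fb(\prec)^{\min}$-primitive vector in $K^-(X)$ whose weight has coordinate sum strictly larger than that of $\gamma$, and then to use the $\Z$-grading on $K^-(X)$ to conclude that the $\g$-submodule it generates is proper. Concretely, let $v\in X$ be a nonzero $\fb(\prec)^0$-highest weight vector of weight $\gamma$; under the $\g^0$-module isomorphism $\g^1\cong\Lambda^2(V)\otimes V_*$, the element $u:=\xi_1\xi_3\partial_2$ is the $\fb(\prec)^0$-highest weight vector of $\g^1$, of weight $\varepsilon_1-\varepsilon_2+\varepsilon_3$, since $\xi_1\wedge\xi_3$ and $\partial_2$ are the $\fb(\prec)^0$-highest weight vectors of $\Lambda^2(V)$ and $V_*$ respectively. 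I would set $w:=u\otimes v\in K^-(X)$, which is nonzero by PBW, and claim that $w$ is $\fb(\prec)^{\min}$-primitive of weight $\gamma':=\gamma+\varepsilon_1-\varepsilon_2+\varepsilon_3$.

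The main technical step is the verification of this claim. For $x\in\n(\prec)^0$, one has $x\cdot w=[x,u]\otimes v+u\otimes(x\cdot v)=0$ because both $u$ and $v$ are annihilated by $\n(\prec)^0$. For $\xi\in\g^<$, since $\xi\cdot v=0$ we get $\xi\cdot w=[\xi,u]\otimes v$, and the bracket formula~(\ref{bracket form}) shows that $[\partial_j,u]$ vanishes unless $j\in\{1,3\}$, in which case it is a scalar multiple of $\xi_3\partial_2$ or $\xi_1\partial_2$. Both of these lie in $\n(\prec)^0$, since their roots $\varepsilon_3-\varepsilon_2$ and $\varepsilon_1-\varepsilon_2$ are in $\Delta^+$ by~(\ref{pos roots b0}) (odd minus even is positive), so they annihilate $v$; hence $\g^<\cdot w=0$.

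To conclude, let $N:=\bU(\g)\cdot w$. Since $w$ is $\fb(\prec)^{\min}$-primitive, every weight of $N$ has the form $\gamma'+\sum_i\beta_i$ with each $\beta_i\in\Delta\setminus\Delta^+(\fb(\prec)^{\min})$, i.e., either a negative root of $\fb(\prec)^0$ (coordinate sum $0$) or a weight of $\g^k$ for some $k\geq 1$ (coordinate sum $k\geq 1$). Thus every weight of $N$ has coordinate sum at least $\bigl(\sum_i\gamma_i\bigr)+1$. By Proposition~\ref{prop:JH.Kac.g_0-module}, the degree-zero piece $T^0_-=1\otimes X$ of $K^-(X)$ consists of weight vectors of coordinate sum $\sum_i\gamma_i$; therefore $N\cap T^0_-=0$ and in particular $1\otimes v\notin N$. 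Hence $N$ is a proper nonzero $\g$-submodule of $K^-(X)$, so $K^-(X)$ is not simple. The second assertion follows immediately, since each $V_{\lambda,\mu}\in\bbTgl$ is a $\fb(\prec)^0$-highest weight $\g^0$-module by Theorem~\ref{th:PSt}.
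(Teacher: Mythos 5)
Your proof is correct and follows essentially the same route as the paper: both take the $\fb(\prec)^0$-primitive root vector $\xi_1\xi_3\partial_2\in\g^1$ of weight $\varepsilon_1+\varepsilon_3-\varepsilon_2$, apply it to a highest weight vector $v$ of $X$, and check via the bracket formula that the resulting vector is $\fb(\prec)^{\min}$-primitive. The only difference is that you spell out explicitly (via the coordinate-sum/grading argument) why this primitive vector generates a proper submodule, a step the paper leaves implicit; this is fine, noting only that for general highest weight $X$ one should compare weights relative to $\gamma$ rather than cite Proposition~\ref{prop:JH.Kac.g_0-module}, which is stated for $X\in\bbTgl$.
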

\begin{proof}
Let $v\in X$ be a nonzero $\fb(\prec)^0$-highest weight vector, and let $x\in \g_{\varepsilon_1 + \varepsilon_3 - \varepsilon_2}\subseteq \g^1$ be a nonzero root vector. Since $\g^1\cong \Lambda^2(V)\otimes V_*$ as $\g^0$-modules, $x$ is a $\fb(\prec)^0$-highest weight vector of $\g^1$. We claim that $w:=xv\in K^-(X)$ is a $\fb(\prec)^{\min}$-highest weight vector. Indeed, since $x$ and $w$ are $\fb(\prec)^0$-highest weight vectors, it is clear that $w$ is a $\fb(\prec)^0$-highest weight vector. Now let $y_i\in \g_{-\varepsilon_i}\subseteq \g^{-1}$ be a nonzero root vector, and observe that either $[y_i, x]\in \n(\prec)^0$ or $[y_i, x]=0$. In any case, we get $y_iw=0$, and thus $\g^{-1}w=0$. This proves the claim, and the result.
\end{proof}

\begin{cor}
If $X\in \bbTgl$, then $K^-(X)$ is not simple.
\end{cor}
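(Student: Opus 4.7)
The plan is to reduce the general case to the previously established Proposition~\ref{prop:K^-not.simple} by exploiting the exactness of the induction functor $K^-(-)$.

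First, I would pick a nonzero simple submodule $Y\subseteq X$, which exists because $X\in\bbTgl$ has finite length by definition. By Theorem~\ref{th:PSt} together with the discussion following \eqref{eq hw}, every simple object of $\bbTgl$ is isomorphic to some $V_{\lambda,\mu}$, and is in particular a highest weight module with respect to the Borel subalgebra $\fb(\prec)^0$. Proposition~\ref{prop:K^-not.simple} then applies to $Y$ and provides a nonzero proper submodule $N\subsetneq K^-(Y)$.

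Next, I would invoke the exactness of $K^-(-)$ (a consequence of the PBW theorem, as noted in the Remark immediately after its definition) applied to the inclusion $Y\hookrightarrow X$, which yields an embedding $K^-(Y)\hookrightarrow K^-(X)$. Identifying $N$ with its image in $K^-(X)$, it is a nonzero submodule. The inequality $N\subsetneq K^-(X)$ is automatic: if instead $N=K^-(X)$, then the containment $K^-(Y)\subseteq K^-(X)=N\subsetneq K^-(Y)$ would be a contradiction. Hence $K^-(X)$ has a nonzero proper submodule and is not simple.

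There is no substantial obstacle here; the nontrivial content, namely the construction of a proper $\fb(\prec)^{\min}$-primitive vector in $K^-(Y)$ using a highest weight vector of $Y$ together with a root vector in $\g^1$, has already been carried out in the preceding proposition. The extension to arbitrary $X\in\bbTgl$ amounts to the straightforward transfer of structure along the canonical inclusion $Y\hookrightarrow X$ and the exactness of $K^-(-)$.
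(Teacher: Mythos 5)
Your proof is correct and takes essentially the same route as the paper: both arguments reduce the statement to Proposition~\ref{prop:K^-not.simple} applied to a simple module $V_{\lambda,\mu}$, using basic properties of the induction functor $K^-(-)$. The only difference is organizational: the paper splits into the cases $X$ simple and $X$ not simple, while you handle both at once by inducing from a simple submodule of $X$ and invoking exactness of $K^-(-)$.
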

\begin{proof}
If $X$ is not simple, then $K^-(X)$ is not simple. If $X$ is simple then $X\cong V_{\lambda,\mu}$ for some partitions $\lambda$, $\mu$, so the claim follows from Proposition~\ref{prop:K^-not.simple}.
\end{proof}

For a subalgebra $\fk$ of $\g$ and a $\g$-module $M$, we define $M^{\fk}=\{m\in M\mid \fk\cdot m=0\}$. Recall that $\g^<=\g^{-1}$. We define the functor $\Psi: \bbT_W^\leq\to \tbbTgl$ by
	\[
\Psi(M) := M^{\g^<}.
	\]
	
\begin{prop}\label{prop:inv.non-zero}
If $M\in \bbT_W^\leq$, then $\Psi(M)\neq 0$. If $M\in \bbT_W^\leq$ is simple, then
\begin{enumerate}
\item $\Psi(M)$ is a simple $\g^0$-module, 
\item $\Psi(M)$ is in $\bbTgl$, and 
\item $M\cong L^- (\Psi(M))$.
\end{enumerate}
\end{prop}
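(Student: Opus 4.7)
My plan is to prove the four claims in sequence, deriving them all from the $\Z$-grading of $M$ and the l.a.c.\ for $\g^\leq$.

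\textbf{Non-vanishing of $\Psi(M)$.} The key step is to show that for any $m\in M$, the subspace $\bU(\g^<)\cdot m$ is finite-dimensional. Since $M$ is an $\h$-weight module, I may take $m$ homogeneous. By the l.a.c.\ for $\g^\leq$, choose $n$ with $\ft_n\cap \g^\leq\subseteq \Ann_\g(m)$; in particular, $\partial_j\cdot m=0$ for $j\geq n$. Direct inspection of the bracket formula~(\ref{bracket form}) shows that each $\partial_i$ with $i<n$ super-commutes with the generators $\partial_j$ and $\xi_k\partial_l$ ($j,k,l\geq n$) of $\ft_n\cap \g^\leq$, so the whole monomial $\partial_{i_1}\cdots \partial_{i_k}$ super-commutes with $\ft_n\cap \g^\leq$ whenever all $i_t<n$. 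Hence $\ft_n\cap \g^\leq$ annihilates each $\partial_{i_1}\cdots \partial_{i_k}\cdot m$, and in particular $\partial_j$ kills it for $j\geq n$. Combined with the oddness of the $\partial_i$'s (so $\partial_i^2=0$ and distinct $\partial_i$'s anticommute), this forces $\bU(\g^<)\cdot m$ to be spanned by at most $2^{n-1}$ such monomials. Because each $\partial_i$ lowers the $\Z$-degree by one, the bottom component of this finite-dimensional graded space is a nonzero element of $\Psi(M)$.

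\textbf{Structure of $\Psi(M)$ for simple $M$.} Now assume $M$ is simple. A quick check shows $\Psi(M)$ is $\g^0$-stable: $\partial(xm)=[\partial,x]m=0$ for $\partial\in \g^<$, $x\in \g^0$, $m\in \Psi(M)$, since $[\partial,x]\in \g^<$. Pick a nonzero homogeneous $m\in \Psi(M)$ of degree $k_1$. Then $\bU(\g^\leq)\cdot m=\bU(\g^0)\cdot m\subseteq M^{k_1}$, and using the PBW decomposition $\bU(\g)=\bU(\g^>)\bU(\g^\leq)$ together with simplicity, $M=\bU(\g^>)\cdot \bU(\g^0)\cdot m\subseteq \bigoplus_{l\geq k_1} M^l$. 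Thus the $\Z$-grading of $M$ is bounded below; let $k_0$ be the smallest degree with $M^{k_0}\neq 0$. Then $M^{k_0}\subseteq \Psi(M)$ because $\g^<$ lowers degree to $M^{k_0-1}=0$, and applying the above boundedness argument to any $m\in M^{k_0}\setminus\{0\}$ gives $k_0=k_1$ for every homogeneous element of $\Psi(M)$, so $\Psi(M)=M^{k_0}$. For any nonzero $\g^0$-submodule $P\subseteq M^{k_0}$, the same PBW computation yields $M=\bU(\g)P=\bU(\g^>)\cdot P$, and comparing degree-$k_0$ parts forces $P=M^{k_0}$. Hence $\Psi(M)$ is simple as a $\g^0$-module.

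\textbf{Claims (b) and (c).} Finite length, integrability, and the l.a.c.\ for $\Psi(M)$ over $\g^0$ are inherited from the corresponding properties of $M$ (the l.a.c.\ via $\Ann_{\g^0}(m)\supseteq \ft_n\cap \g^\leq\cap \g^0=\ft_n'$), so $\Psi(M)\in \bbTgl$. Letting $\g^<$ act as zero extends the $\g^0$-action on $\Psi(M)$ to a $\g^\leq$-module structure, compatibly with the bracket since $[\g^<,\g^<]=0$ and $[\g^0,\g^<]\subseteq \g^<$. The inclusion $\Psi(M)\hookrightarrow M$ is $\g^\leq$-equivariant, and the adjunction defining $K^-$ extends it to a $\g$-homomorphism $K^-(\Psi(M))\to M$, which is surjective by the simplicity of $M$. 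Since $\Psi(M)$ is simple and lies in $\bbTgl$, $K^-(\Psi(M))$ admits a unique simple quotient $L^-(\Psi(M))$, and so $M\cong L^-(\Psi(M))$. I expect the main obstacle to be the non-vanishing step: the l.a.c.\ only controls the annihilator of a single vector, and it must be combined with the oddness of the $\partial_i$ and the commutation computation above to upgrade this to genuine finite-dimensionality of $\bU(\g^<)\cdot m$; once that is in hand, the remaining assertions follow formally from the graded structure and the universal property of $K^-$.
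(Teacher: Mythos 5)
Your argument is correct and follows essentially the same route as the paper: you use the Grassmann nature of $\bU(\g^<)$ together with the l.a.c.\ to produce a nonzero $\g^<$-invariant vector (the paper takes a longest nonzero monomial $\partial_{i_1}\cdots\partial_{i_k}\cdot m$, you take the bottom graded component of the finite-dimensional space $\bU(\g^<)\cdot m$ --- the same mechanism), then identify $\Psi(M)$ with the lowest graded component via PBW, deduce $\g^0$-simplicity by comparing degree-$k_0$ parts, check the $\bbTgl$ axioms, and conclude with the universal property of $K^-$ and uniqueness of the simple quotient. The only (cosmetic) slip is that homogeneity of $m$ should be taken with respect to the $\Z$-grading of $M$, which exists by Definition~\ref{def:T_W-Cat}(1), rather than justified via the $\h$-weight decomposition.
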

\begin{proof}
For a nonzero vector $m\in M$ there exists $n$ such that $(\ft_{n}\cap\g^{\leq}) \subseteq \Ann (m)$. Choose $m'$ to be the longest non-zero element of the form $\partial_{i_1}\cdots \partial_{i_k}\cdot m$ such that $1\leq i_1< \cdots < i_k\leq n$. Then $\partial_j \cdot m'=0$ for every $j\leq n$. Moreover,  $\partial_j\cdot m'=0$ for all $j>n$, since $\partial_j\in(\ft_{n}\cap\g^{\leq}) \subseteq \Ann (m)$ and $\partial_i\partial_j=-\partial_j\partial_i$ for any $i,j\in I$. This shows that $m'\in M^{\g^<}$ and the first part is proved. 
    
Assume now that $M\in \bbT_W$ is simple. The first part and the simplicity of $M$ implies that there exist $n\in \Z$ such that $M=\bigoplus_{k\geq n}M^k$. Indeed, if $(M^{\g^{<}})^n$ is nonzero, then $M=\bU(\g) (M^{\g^{<}})^n=\bU(\g^{>}) (M^{\g^{<}})^n$, which determines $n$. Moreover,  we get $M^{\g^<}= M^n$. 
    
Now, if $N$ is a proper $\g^0$-submodule of $M^{\g^<}$, then $\bU(\g) N=N\oplus \bU(\g^{>})^+N$ and hence $M^{\g^<}\cap \bU(\g) N=N$. This implies that $\bU(\g) N$ is a proper $\g$-submodule of $M$, contradicting the irreducibility of $M$. Since $M^{\g^<}$ is a simple $\g^0$-submodule of $M$, and $M|_{\g^0}\in \tbbTgl$, we get $M^{\g^<}\in \bbTgl$.  Finally, we have $M\cong L^-(M^{\g^<})$ since $M=\bU(\g^{>}) M^{\g^{<}}$.
\end{proof}

\begin{rem}
The proof of Proposition~\ref{prop:inv.non-zero} does not work if we replace $\g^<$ by $\g^>$, since unlike $\bU(\g^<)$, the algebra $\bU(\g^>)$ is not a Grassmann algebra. In fact, $\Lambda(V)$, $\Lambda(V)_+$ and $\g$ are examples of modules in $\bbT_W$ (and hence in $\bbT_W^\geq$) for which $M^{\g^>}=0$. Actually, we cannot have a simple module $M\in \bbT_W$ with $M^{\g^>}\neq 0$. Indeed, if this were the case, then we could use the same arguments as those of Proposition~\ref{prop:inv.non-zero} to conclude that $M^{\g^>}\in \tbbTgl$, and hence that $M\cong K^+(M^{\g^>})$ (by Theorem~\ref{prop:K+.simple}). But $K^+(M^{\g^>})\notin \bbT_W$, since it does not satisfy the l.a.c. for $\g^\leq$.
\end{rem}

\begin{example}\label{ex:nat.con.in.T_W}
Note that $L^-(V)$  is isomorphic to $\Lambda(V)_+:=\Lambda(V)/\C$, since $(\Lambda(V)_+)^{\g^<} = V$. And, $L^-(V_*)$ is isomorphic to the adjoint module of $\g$, since it satisfies $\g^{\g^<} = \g^{-1}\cong_{\g^0} V_*$. A direct computation shows that the $\g$-modules  $\Lambda(V)_+$ and $\g$ are both objects in $\bbT_W$ (see Example~\ref{ex:TW}), and they have $\fb(\prec)^{\min}$-highest weights $\varepsilon_1$ and  $-\varepsilon_2$, respectively. \hfill  $\bigcirc$ 
\end{example}

We set $L^-_{\lambda,\mu}:=L^-(V_{\lambda,\mu})$. Then $L^-_{\lambda,\mu}$ is a highest weight $\g$-module with respect to the Borel subalgebra
$\fb(\prec)^{\min}$ and has highest weight given by (\ref{eq hw}). It follows from Proposition~\ref{prop:inv.non-zero} that $ \Psi (L^-_{\lambda,\mu})\cong V_{\lambda,\mu}$. The next result describes the simple objects of $\bbT_W^{\leq}$ in terms of partitions.

\begin{theo}\label{theo: simples Lminus}
The $\g$-module $L^-_{\lambda,\mu}$ is in the category $\bbT_W^{\leq}$ for any partitions $\lambda$ and $\mu$. Moreover, if $M\in \bbT_W^{\leq}$ is simple, then there exist partitions $\lambda$ and $\mu$ for which $M\cong L^-_{\lambda, \mu}$.
\end{theo}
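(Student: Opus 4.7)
The plan is to combine Proposition~\ref{prop:inv.non-zero} and Theorem~\ref{prop:Kac.in.T_W} with the classification of simple objects of $\bbTgl$ recalled in Section~\ref{sec:gl}. The two assertions essentially decouple: the first is a closure-under-quotients argument for the Kac module $K^-(V_{\lambda,\mu})$, and the second is a direct consequence of Proposition~\ref{prop:inv.non-zero} once one knows that the simple objects of $\bbTgl$ are parameterized by pairs of partitions.

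For the first assertion, I would start from Theorem~\ref{prop:Kac.in.T_W}, which places $K^-(V_{\lambda,\mu})$ in $\bbT_W^\leq$. Since $L^-_{\lambda,\mu} := L^-(V_{\lambda,\mu})$ is the unique simple quotient of $K^-(V_{\lambda,\mu})$, it suffices to verify that $\bbT_W^\leq$ is closed under quotients. Conditions (2) and (3) of Definition~\ref{def:T_W-Cat} are automatic: integrability over $\g^0$ passes to quotients, and if $m\in M$ is any lift of $\bar m\in M/N$ then $\Ann_\g(m)\subseteq \Ann_\g(\bar m)$, so the l.a.c. for $\g^\leq$ is also inherited. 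For condition (1), I must check that the maximal proper submodule $N\subset K^-(V_{\lambda,\mu})$ is $\Z$-graded. Restricted to $\g^0$ the module $K^-(V_{\lambda,\mu})$ lies in $\tbbTgl$, and hence is $\h$-weight by Remark~\ref{rem:MIntbbTglIsWeightModule}. The $\Z$-grading produced in Proposition~\ref{prop:JH.Kac.g_0-module} is a coarsening of the weight decomposition, assigning to a weight $\nu$ the integer $\sum_i \nu_i$. Since every submodule of an $\h$-weight module is itself $\h$-weight (one separates weights using the infinitely many diagonal generators $\xi_i\partial_i\in\h$), the submodule $N$ is $\h$-weight, hence $\Z$-graded, and the $\Z$-grading descends to $L^-_{\lambda,\mu}$.

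For the converse, let $M\in \bbT_W^\leq$ be simple. Proposition~\ref{prop:inv.non-zero} gives that $\Psi(M)=M^{\g^<}$ is a nonzero simple $\g^0$-module lying in $\bbTgl$, and that $M\cong L^-(\Psi(M))$. By the classification recorded in Section~\ref{mod gs} (in particular Theorem~\ref{th:PSt} and the equivalence $\bbTgl\simeq\bbTsl$ of Proposition~\ref{prop T eq}, combined with \cite[Corollary~4.6]{DPS16}), every simple object of $\bbTgl$ is isomorphic to $V_{\lambda,\mu}$ for a unique pair of partitions $(\lambda,\mu)$. Hence $\Psi(M)\cong V_{\lambda,\mu}$ and $M\cong L^-(V_{\lambda,\mu})=L^-_{\lambda,\mu}$.

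The only conceptually nontrivial point is the one already handled in Proposition~\ref{prop:inv.non-zero}, namely the use of the l.a.c.~for $\g^\leq$ together with the Grassmann-algebra structure of $\bU(\g^<)$ to extract a nonzero $\g^<$-invariant. Granting that result and the parameterization of simples in $\bbTgl$, the rest of the argument is essentially formal.
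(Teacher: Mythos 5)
Your proposal is correct and follows essentially the same route as the paper: Theorem~\ref{prop:Kac.in.T_W} plus closure of $\bbT_W^{\leq}$ under quotients for the first assertion, and Proposition~\ref{prop:inv.non-zero} together with the parameterization of simples in $\bbTgl$ for the second. The only difference is that you spell out why the maximal submodule of $K^-(V_{\lambda,\mu})$ is $\Z$-graded (via the $\h$-weight decomposition), a detail the paper leaves implicit in the phrase ``closed under taking quotients.''
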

\begin{proof}
By Theorem~\ref{prop:Kac.in.T_W}, $K^-(V_{\lambda,\mu})\in\bbT_W^\leq$, so  $L^-_{\lambda,\mu}\in\bbT_W^\leq$ since $\bbT_W^\leq$ is closed under taking quotients.
Now by  Proposition~\ref{prop:inv.non-zero}, for any simple $\g$-module $M$, we have that $\Psi(M)$ is a simple object of $\bbTgl$, and hence $\Psi(M)\cong V_{\lambda,\mu}$ for some partitions $\lambda,\mu$. Moreover,  $M\cong L^- (\Psi(M))\cong L^- (V_{\lambda,\mu})=L^-_{\lambda, \mu}$.
\end{proof}

\begin{cor}\label{cor:simple.in.T_W}
Every simple module of $\bbT_W^{\leq}$ (and hence of $\bbT_W$) is a highest weight module with respect to the Borel subalgebra $\fb(\prec)^{\min}$.
\end{cor}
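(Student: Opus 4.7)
The plan is to derive this corollary directly from Theorem~\ref{theo: simples Lminus} together with Proposition~\ref{prop:inv.non-zero}. By Theorem~\ref{theo: simples Lminus}, every simple module $M \in \bbT_W^{\leq}$ is isomorphic to $L^-_{\lambda,\mu}$ for some pair of partitions $\lambda,\mu$, so it suffices to exhibit, inside each $L^-_{\lambda,\mu}$, a nonzero vector annihilated by the nilpotent radical of $\fb(\prec)^{\min}$.

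First I would unpack the Borel: since $\fb(\prec)^{\min} = \fb(\prec)^0 \oplus \g^<$, its nilpotent radical is $\n(\prec)^0 \oplus \g^<$, where $\n(\prec)^0$ is the nilpotent radical of the Borel subalgebra $\fb(\prec)^0$ of $\g^0$. The key idea is to locate the prospective highest weight vector inside the $\g^<$-invariants. By Proposition~\ref{prop:inv.non-zero}, the subspace $\Psi(L^-_{\lambda,\mu}) = (L^-_{\lambda,\mu})^{\g^<}$ is a simple object of $\bbTgl$, and by construction it must be isomorphic to $V_{\lambda,\mu}$. Let $v \in V_{\lambda,\mu} \subset L^-_{\lambda,\mu}$ be the $\fb(\prec)^0$-highest weight vector, of weight $\chi$ as in~\eqref{eq hw}. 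Then $v$ is annihilated by $\g^<$ (since $v \in (L^-_{\lambda,\mu})^{\g^<}$) and by $\n(\prec)^0$ (by choice of $v$), so $v$ is a nonzero $\fb(\prec)^{\min}$-primitive vector of weight $\chi$ in $L^-_{\lambda,\mu}$.

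Since $L^-_{\lambda,\mu}$ is simple and $v$ is nonzero, the submodule generated by $v$ is all of $L^-_{\lambda,\mu}$; thus $L^-_{\lambda,\mu}$ is the $\fb(\prec)^{\min}$-highest weight module with highest weight $\chi$. The claim for $\bbT_W$ is then immediate: because $\bbT_W$ is a full subcategory of $\bbT_W^{\leq}$ and simplicity in either category means simplicity as a $\g$-module, any simple object of $\bbT_W$ is simple in $\bbT_W^{\leq}$, and the argument above applies verbatim. I anticipate no genuine obstacle — the corollary is essentially a repackaging of the content of Proposition~\ref{prop:inv.non-zero} and Theorem~\ref{theo: simples Lminus}, the only mild subtlety being the bookkeeping of what constitutes the nilpotent radical of $\fb(\prec)^{\min}$.
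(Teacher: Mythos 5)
Your proposal is correct and follows essentially the same route as the paper: the corollary is read off from Theorem~\ref{theo: simples Lminus} together with the observation that $L^-_{\lambda,\mu}$ is a $\fb(\prec)^{\min}$-highest weight module. The paper sees this because $L^-_{\lambda,\mu}$ is a quotient of the highest weight module $K^-(V_{\lambda,\mu})$, while you locate the highest weight vector inside $(L^-_{\lambda,\mu})^{\g^<}\cong V_{\lambda,\mu}$ via Proposition~\ref{prop:inv.non-zero}; these are the same argument up to bookkeeping.
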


\begin{prop}\label{prop:simple.in.T_W}
For any pair of partitions $\lambda$ and $\mu$, there exist $m,n\in \Zp$ such that the $\g$-module $L^-_{\lambda,\mu}$ is a subquotient of $L^-(V)^{\otimes m}\otimes L^-(V_*)^{\otimes n}$. In particular, $L^-_{\lambda,\mu}$ lies in $\bbT_W$.
\end{prop}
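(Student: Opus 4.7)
The plan is to exhibit $L^-_{\lambda,\mu}$ as a subquotient of $T_{m,n}:=L^-(V)^{\otimes m}\otimes L^-(V_*)^{\otimes n}$ for suitable $m,n$, and then invoke closure of $\bbT_W$ under finite tensor products and subquotients to conclude both the subquotient claim and the final statement $L^-_{\lambda,\mu}\in\bbT_W$. By Example~\ref{ex:nat.con.in.T_W}, $L^-(V)\cong\Lambda(V)_+$ and $L^-(V_*)\cong\g$ both lie in $\bbT_W$, so $T_{m,n}\in\bbT_W$; the three defining conditions of $\bbT_W$ pass to finite tensor products (annihilators of elementary tensors contain intersections of annihilators) and to subquotients (integrability, l.a.c., and the grading are all preserved).

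The first step is to locate a copy of $V_{\lambda,\mu}$ inside the $\g^<$-invariants $(T_{m,n})^{\g^<}$. From Proposition~\ref{prop:JH.Kac.g_0-module}, applied to the simple tensor modules $V$ and $V_*$ for which $|V|=1$ and $|V_*|=-1$, the $\Z$-graded module $L^-(V)$ is concentrated in degrees $\geq 1$ with lowest component $V$, while $L^-(V_*)$ is concentrated in degrees $\geq -1$ with lowest component $V_*$. Consequently $T_{m,n}$ has lowest $\Z$-degree $m-n$ with lowest graded component $V^{\otimes m}\otimes V_*^{\otimes n}$, and since elements of $\g^<=\g^{-1}$ strictly decrease $\Z$-degree, this bottom piece is annihilated by $\g^<$.

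Taking $m=|\lambda|$ and $n=|\mu|$, Theorem~\ref{th:PSt} together with the Schur--Weyl decomposition recalled in Section~\ref{mod gs} shows that $\tilde{V}_{\lambda,\mu}=\mathbb{S}_\lambda(V)\otimes\mathbb{S}_\mu(V_*)$ is a direct $\g^0$-summand of $V^{\otimes m}\otimes V_*^{\otimes n}$ with simple socle $V_{\lambda,\mu}$. Thus $V_{\lambda,\mu}$ embeds into $T_{m,n}$ as a simple $\g^0$-submodule annihilated by $\g^<$, and Frobenius reciprocity yields a nonzero $\g$-module homomorphism $\varphi\colon K^-(V_{\lambda,\mu})\to T_{m,n}$ with image $\bU(\g)\cdot V_{\lambda,\mu}$. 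Since $\ker\varphi$ is proper in $K^-(V_{\lambda,\mu})$, it lies in the unique maximal submodule of $K^-(V_{\lambda,\mu})$, and therefore the image surjects onto $L^-_{\lambda,\mu}$. This realizes $L^-_{\lambda,\mu}$ as a subquotient of $T_{m,n}$, and membership $L^-_{\lambda,\mu}\in\bbT_W$ follows from closure under subquotients.

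I expect the argument above to be essentially routine once the $\Z$-degree filtration of $T_{m,n}$ is recognized and married with Frobenius reciprocity. The only genuine subtlety is the boundary case $\lambda=\emptyset$ or $\mu=\emptyset$, where the prescribed $m=|\lambda|$ or $n=|\mu|$ would equal $0$ and fall outside $\Zp=\Z_{>0}$. One handles this either by reading $\Zp$ as $\Z_{\geq 0}$, or by increasing $m$ and $n$ by one and using the $\g^0$-equivariant trace $V\otimes V_*\twoheadrightarrow\C$ to locate $V_{\lambda,\mu}$ as a simple submodule of a suitable $\g^0$-quotient of $V^{\otimes m}\otimes V_*^{\otimes n}$, then applying the same Frobenius reciprocity argument in the corresponding quotient of $T_{m,n}$.
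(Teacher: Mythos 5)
Your proof is correct and follows essentially the same route as the paper: take $m=|\lambda|$, $n=|\mu|$, realize $V^{\otimes m}\otimes V_*^{\otimes n}$ as the lowest graded piece of $L^-(V)^{\otimes m}\otimes L^-(V_*)^{\otimes n}$, hence inside the $\g^<$-invariants, and conclude by closure of $\bbT_W$ under tensor products and subquotients. The only cosmetic difference is that you extract $L^-_{\lambda,\mu}$ via the simple socle $V_{\lambda,\mu}$ and Frobenius reciprocity with $K^-(V_{\lambda,\mu})$, whereas the paper works directly with a $\fb(\prec)^{\min}$-highest weight vector; your remark about the degenerate case $m=0$ or $n=0$ is a fair, easily handled point that the paper's proof does not dwell on.
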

\begin{proof}
Let $m=|\lambda|$ and $n=|\mu|$. Then $V^{\otimes m}\otimes V_*^{\otimes n}$ admits a $\fb(\prec)^0$-highest weight vector $v\neq 0$ of highest weight $\sum_{i\in \Zp}\lambda_i \varepsilon_{2i-1} -\sum_{i\in \Zp}\mu_i\varepsilon_{2i}\in \h^*$. The obvious embedding of $\g^0$-modules of $V$ into $K^-(V)$ gives rise to an embedding of $\g^0$-modules of $V$ into $L^-(V)^{\g^<}$, since $V$ is simple. Therefore we have an embedding $\g^0$-modules
	\[
V^{\otimes m}\otimes V_*^{\otimes n} \hookrightarrow \left(L^-(V)^{\otimes m}\otimes L^-(V_*)^{\otimes n}\right)^{\g^<},
	\]
and we may assume that $v\in \left(L^-(V)^{\otimes m}\otimes L^-(V_*)^{\otimes n}\right)^{\g^<}$. Then $v$ is a $\fb(\prec)^{\min}$-highest weight vector, and $L^-_{\lambda,\mu}$ is a subquotient of $L^-(V)^{\otimes m}\otimes L^-(V_*)^{\otimes n}$. Since $L^-(V)$ and $L^-(V_*)$ are in $\bbT_W$ (see Example~\ref{ex:nat.con.in.T_W}), the result follows now from the fact that $\bbT_W$ is closed under tensor products and subquotients.
\end{proof}

\begin{rem}
Proposition~\ref{prop:simple.in.T_W} shows that the $\g$-modules $L^-(V)$ and $L^-(V_*)$ play a similar role for the category $\bbT_W$ as that of the $\g^0$-modules $V$ and $V_*$ for the category $\bbTgl$. In the category $\bbTgl$, all simple objects can be realized as subquotients of some $V^{\otimes m}\otimes V_*^{\otimes n}$.
\end{rem}

We obtain the following corollary from Theorem~\ref{theo: simples Lminus} and \ref{prop:simple.in.T_W}.

\begin{cor}\label{cor: coincide}
The simple objects of $\bbT_W$ and $\bbT_W^{\leq}$ coincide.
\end{cor}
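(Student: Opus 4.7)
The plan is to assemble this corollary directly from the two preceding results, since all the real work has already been done. The inclusion of one direction is essentially free: the earlier remark noted that $\bbT_W$ is a full subcategory of $\bbT_W^{\leq}$, so any simple object of $\bbT_W$ is automatically a simple object of $\bbT_W^{\leq}$.

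For the nontrivial direction, I would start with a simple module $M \in \bbT_W^{\leq}$ and invoke Theorem~\ref{theo: simples Lminus} to produce partitions $\lambda, \mu$ such that $M \cong L^-_{\lambda, \mu}$. Then Proposition~\ref{prop:simple.in.T_W} tells us that $L^-_{\lambda, \mu}$ lies in $\bbT_W$. Since $L^-_{\lambda, \mu}$ is simple by construction (being the unique simple quotient of $K^-(V_{\lambda,\mu})$), this gives $M$ as a simple object of $\bbT_W$.

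There is no substantive obstacle here; the statement is a packaging of Theorem~\ref{theo: simples Lminus} with Proposition~\ref{prop:simple.in.T_W}. The only thing to verify carefully is that ``simple in $\bbT_W^{\leq}$'' and ``simple in $\bbT_W$'' really mean the same thing for an object common to both categories, which follows from both being full subcategories of $\g$-mod so that simplicity is intrinsic to the underlying $\g$-module. I would therefore write the proof as essentially one sentence: combining the classification in Theorem~\ref{theo: simples Lminus} with the fact that each $L^-_{\lambda,\mu}$ already belongs to $\bbT_W$ by Proposition~\ref{prop:simple.in.T_W} yields the equality of the two families of simple objects.
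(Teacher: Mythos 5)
Your proposal is correct and is essentially the paper's own argument: the paper also obtains this corollary by combining Theorem~\ref{theo: simples Lminus} (every simple object of $\bbT_W^{\leq}$ is some $L^-_{\lambda,\mu}$) with Proposition~\ref{prop:simple.in.T_W} (each $L^-_{\lambda,\mu}$ lies in $\bbT_W$), the reverse inclusion being immediate since $\bbT_W$ is a full subcategory of $\bbT_W^{\leq}$ inside $\g\Mod$, so simplicity is intrinsic. Nothing further is needed.
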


\begin{rem}
Note that Corollary~\ref{cor: coincide} does not hold if we replace $\bbT_W^{\leq}$ with $\bbT_W^{\geq}$.
Indeed,  by Theorem~\ref{prop:Kac.in.T_W} and Theorem~\ref{prop:K+.simple},  the induced $\g$-module $K^+(V_{\lambda, \mu})$ is simple when  $(\lambda,\mu)\neq (\emptyset,(\mu_1))$; however, we have seen in Remark~\ref{rem: K lac} that $K^+(V_{\lambda, \mu})$ is never in $\bbT_W$.
\end{rem}

%%%%%%%%%%%%%%%%
%
\section{Realization of simple modules as tensor fields}\label{sex:tensor_modules}
%
%%%%%%%%%%%%%%%%

In this section, we prove that the simple module  $L^-_{\lambda,\mu}$ can be realized as a module of tensor fields. This is a natural generalization of the classical work of Bernstein and Leites for the finite-dimensional Cartan type Lie superalgebra $W(n)$  \cite{BL81}.

%This extends results from the finite-dimensional case to our setting. 

Let $X$ be a $\fgl(\infty)$-module.  We define a $\g$-module structure on the graded vector space 
    \[
\cT(X) := \Lambda (\infty)\otimes X = \bigoplus_{k\geq 0} \Lambda(\infty)^k\otimes X
    \]
by linearly extending the following action:
	\[
\underline{\xi}^{\underline{e}} \partial_j \cdot (fv) = \underline{\xi}^{\underline{e}} \partial_j(f)v +  (-1)^{p(\underline{\xi}^{\underline{e}} \partial_j)p(f)}\sum_{i\in I} \partial_i(\underline{\xi}^{\underline{e}}) f E_{i,j}v,
	\]
where $f\in \Lambda (\infty)$ and $v\in X$. This action is well defined since $\partial_i(\underline{\xi}^{\underline{e}})$ is nonzero only for finitely many $i\in I$. We call the module $\cT(X)$ the module of tensor fields associated to $X$.

\begin{lem}\label{em:TX_in_tT}
If $X\in \tbbTgl$, then $\cT(X)$ lies in $\bbT_W$. \end{lem}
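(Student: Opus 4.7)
The plan is to verify, in order, the three conditions of Definition~\ref{def:T_W-Cat} that define membership in $\bbT_W$. For the grading, the natural candidate is $\cT(X)^k := \Lambda(\infty)^k \otimes X$. To see this is a $\g$-compatible $\Z$-grading, I would inspect the action formula directly: the operator $\underline{\xi}^{\underline{e}} \partial_j$ has $\g$-degree $|\underline{e}|-1$; applied to $f\otimes v$ with $f\in\Lambda(\infty)^k$, the first term $\underline{\xi}^{\underline{e}}\partial_j(f)v$ lies in $\Lambda(\infty)^{k+|\underline{e}|-1}\otimes X$, and in the second term $\sum_i \partial_i(\underline{\xi}^{\underline{e}})fE_{i,j}v$ the factor $\partial_i(\underline{\xi}^{\underline{e}})fE_{i,j}v$ is again in $\Lambda(\infty)^{k+|\underline{e}|-1}\otimes X$ since $E_{i,j}$ acts internally on $X$. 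Compatibility with the $\Z_2$-grading follows because $X$ is purely even and the parities $|\underline{e}|-1$ and $k$ add mod $2$ to the target.

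For integrability over $\g^0$, I would first check that the restriction of the action to $\g^0=\Span\{\xi_i\partial_j\}$ is the tensor product action: since $\xi_i\partial_j$ is even and $\partial_k(\xi_i)=\delta_{k,i}$, the defining formula collapses to
\[
E_{i,j}\cdot(fv)=(E_{i,j}f)v+f(E_{i,j}v).
\]
Then $\Lambda(\infty)=\bigoplus_{k\geq 0}\Lambda^k(V)$ is an integrable $\g^0$-module (each $\Lambda^k(V)$ is a subquotient of $V^{\otimes k}$, and $V$ is integrable), and $X$ is integrable by assumption, so the tensor product $\cT(X)=\Lambda(\infty)\otimes X$ is integrable.

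The central step is the large annihilator condition for $\g$. The plan here is to work on pure tensors and extend by linearity, choosing one index $n$ that simultaneously controls the $\Lambda$-support of $f$ and the $\g^0$-annihilator of $v$. Concretely, for $m=f\otimes v$, pick $n_1$ so that $f$ involves only $\xi_i$ with $i<n_1$, and pick $n_2$ so that $\ft'_{n_2}v=0$ (using that $X\in\tbbTgl$ satisfies the l.a.c. for $\g^0$); set $n=\max(n_1,n_2)$. For any generator $\underline{\xi}^{\underline{e}}\partial_j$ of $\ft_n$, so that $\Supp(\underline{e})\cup\{j\}\subseteq\{k\geq n\}$, I would verify that both terms in the action formula vanish: the first term dies because $j\geq n_1$ forces $\partial_j(f)=0$, while in the second term, $\partial_i(\underline{\xi}^{\underline{e}})\neq 0$ forces $i\in\Supp(\underline{e})$, hence $i,j\geq n_2$ and $E_{i,j}v=0$ (with the degenerate case $\underline{e}=\emptyset$ handled by $\partial_i(1)=0$). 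Since $\ft_n$ is generated by these elements, $\ft_n\subseteq\Ann_\g(m)$, and extending linearly to finite sums of pure tensors completes the proof.

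The only real obstacle is the bookkeeping in the last step: one must ensure that a single threshold $n$ controls both tensor factors at once, and that the two terms in the $\g$-action formula decouple cleanly once $n\geq\max(n_1,n_2)$. Steps (1) and (2) are essentially direct inspections of the action formula and standard facts about integrable $\g^0$-modules.
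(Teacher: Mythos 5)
Your proof is correct and follows the same route as the paper: verify the three conditions of Definition~\ref{def:T_W-Cat}, with the grading read off from $\cT(X)=\bigoplus_k\Lambda(\infty)^k\otimes X$, integrability coming from the tensor product of the integrable $\g^0$-modules $\Lambda(\infty)$ and $X$, and the l.a.c.\ for $\g$ obtained by combining the l.a.c.\ of the two factors. The paper states these three points without detail, whereas you supply the explicit bookkeeping (the restriction of the action to $\g^0$, and the choice $n=\max(n_1,n_2)$ killing both terms of the action formula), which is exactly the verification the paper leaves implicit.
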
 
\begin{proof}
Now $\cT(X)$ is $\Z$-graded by definition, and since $X$, $\Lambda(\infty)$ are integrable over $\g^0$, we have that $\cT(X)$ is also integrable over $\g^0$. Finally, $\cT(X)$ satisfies the l.a.c. for $\g$, since the action of $\g$ on $\Lambda(\infty)$ and the action of $\fgl(\infty)$ on $X$ both satisfy the l.a.c.. 
\end{proof}

\begin{example}
If $V_*$ is the conatural $\fgl(\infty)$-module, then the $\g$-module $\cT(V_*) = \Lambda(\infty)\otimes V_*$ is isomorphic to the adjoint module $ \g$ (see Section~\ref{sec: W mod}). If $\C$ is the trivial $\fgl(\infty)$-module, then $\cT(\C)\cong \Lambda(\infty)$ is the natural module. \hfill $\bigcirc$ 
\end{example}

Let's consider $X$ as a $\g^{\geq}$-module by declaring $\g^{>}\cdot X=0$. Then we define
	\[
\Coind_{\g^{\geq}}^\g (X) := \Hom_{\g^{\geq}}(\bU(\g), X).
	\]
An easy computation shows that if $t\in X\cong \C\otimes X\subseteq \cT(X)$, then $\partial_i\cdot t = 0$ for all $i\in I$. For an arbitrary $t = \sum f_iv_i \in \cT(X)$, we set
	\[
t(0) := \sum f_i(0)v_i\in X.
	\]

Now we have a natural embedding of $\g$-modules $\varphi: \cT(X)\hookrightarrow\Coind_{\g^{\geq}}^\g (X)$, where  for $t\in\cT(X)$ and $u\in \bU(\g)$,
	\[
\varphi(t)(u) := (-1)^{p(t)p(u)}(u\cdot t)(0).
	\]

Next we consider the finite-dimensional case. Suppose we have inclusions of $\g_n^0$-modules $X_n \subseteq X_{n+1}$ for every $n\in \Zp$, and consider the $\g^0$-module $X = \bigcup X_n$. We set $\cT(X_n):= \Lambda(n)\otimes X_n$. Then we have $\cT(X) = \bigcup \cT(X_n)$, where we regard $\cT(X_n)\subseteq \cT(X_{n+1})$ in the obvious way. The restriction $\varphi_n:=\varphi |_{\cT(X_n)}$ gives  embeddings of $\cT(X_n)$ into $\Hom_{\g_n^\geq}(\bU(\g_n), X_n)$. Moreover we have the following statement from \cite{BL81}.

\begin{lem}\label{lem:tens.fields=coind}
$\cT(X_n)\cong \Coind_{\g_n^{\geq}}^{\g_n} (X_n)$, where the isomorphism is $\varphi_n$.
\end{lem}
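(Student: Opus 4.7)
The plan is to establish surjectivity of $\varphi_n$, since it has already been noted that $\varphi$ (and hence $\varphi_n$) is an injective $\g_n$-module map. By the PBW theorem, $\bU(\g_n)\cong \bU(\g_n^<)\otimes \bU(\g_n^\geq)$ as a right $\bU(\g_n^\geq)$-module, so that restriction yields the $\C$-linear isomorphism
$$\Coind_{\g_n^\geq}^{\g_n}(X_n) = \Hom_{\g_n^\geq}(\bU(\g_n), X_n) \xrightarrow{\sim} \Hom_\C(\bU(\g_n^<), X_n), \quad \phi\mapsto \phi|_{\bU(\g_n^<)}.$$
Since $\g_n^< = \g_n^{-1}$ is abelian and purely odd with basis $\partial_1,\dots,\partial_n$, the universal enveloping algebra $\bU(\g_n^<)$ is the finite-dimensional Grassmann algebra with $\C$-basis $\{\partial_{\underline{e}}:=\partial_{i_1}\cdots \partial_{i_k}\mid \underline{e}\in \cE_n,\ \Supp \underline{e}=\{i_1<\cdots<i_k\}\}$, and so any element of $\Coind_{\g_n^\geq}^{\g_n}(X_n)$ is completely determined by its values on this family.

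The key structural input comes from specializing the defining formula for the $\g$-action on $\cT(X_n)$ to $\underline{\xi}^{\underline{e}}=1$: one reads off $\partial_j\cdot(fv)=\partial_j(f)v$, so that $\bU(\g_n^<)$ acts on $\cT(X_n)$ only through the tensor-field factor $\Lambda(n)$, and this action coincides with the standard super-contraction. Hence, for $t=\sum_{\underline{f}\in \cE_n}\underline{\xi}^{\underline{f}}\otimes v_{\underline{f}}$, evaluation at the origin yields
$$\varphi_n(t)(\partial_{\underline{e}}) \;=\; (\partial_{\underline{e}}\cdot t)(0) \;=\; \epsilon(\underline{e})\, v_{\underline{e}},$$
where $\epsilon(\underline{e})\in\{\pm1\}$ is the sign $\partial_{\underline{e}}(\underline{\xi}^{\underline{e}})$ produced by anticommuting the partial derivatives (an elementary computation gives $\epsilon(\underline{e})=(-1)^{k(k-1)/2}$ when $|\Supp\underline{e}|=k$, but the exact value is inessential).

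To conclude, for any $\phi \in \Coind_{\g_n^\geq}^{\g_n}(X_n)$, I would exhibit the explicit preimage
$$t_\phi \;:=\; \sum_{\underline{e}\in \cE_n}\epsilon(\underline{e})^{-1}\,\underline{\xi}^{\underline{e}}\otimes \phi(\partial_{\underline{e}})\;\in\;\cT(X_n).$$
By the computation above, $\varphi_n(t_\phi)(\partial_{\underline{e}}) = \phi(\partial_{\underline{e}})$ for every $\underline{e}\in\cE_n$, and since both $\varphi_n(t_\phi)$ and $\phi$ are $\g_n^\geq$-equivariant maps on $\bU(\g_n)$, they are determined by their restrictions to $\bU(\g_n^<)$; hence $\varphi_n(t_\phi)=\phi$. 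Thus $\varphi_n$ is surjective, and therefore an isomorphism. The only real difficulty is bookkeeping the Grassmann signs $\epsilon(\underline{e})$; all of the substantive content is the observation that on $\cT(X_n)$ each $\partial_j$ acts as a pure super-derivation on the $\Lambda(n)$-factor, which allows PBW to be transported across $\varphi_n$.
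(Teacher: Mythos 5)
Your proposal is correct, but it argues the key step differently from the paper. The paper's proof takes injectivity of $\varphi_n$ for granted and then verifies that the two sides are abstractly isomorphic via a chain of natural isomorphisms (finite-dimensionality of $\Lambda(\partial_1,\ldots,\partial_n)$, duality, and tensor--hom adjunction ending with $\Hom_{\g_n^{\geq}}(\Lambda(\partial_1,\ldots,\partial_n)\otimes_\C \bU(\g_n^\geq),X_n)\cong\Hom_{\g_n^{\geq}}(\bU(\g_n),X_n)$), i.e.\ it is a ``matching sizes'' argument. You instead prove surjectivity of $\varphi_n$ directly: using the PBW factorization $\bU(\g_n)\cong \bU(\g_n^<)\otimes\bU(\g_n^{\geq})$ you observe that a coinduced map is freely determined by its values on the Grassmann algebra $\bU(\g_n^<)$, you compute that $\partial_j$ acts on $\cT(X_n)$ only through the $\Lambda(n)$-factor so that $\varphi_n(t)(\partial_{\underline e})=\epsilon(\underline e)\,v_{\underline e}$, and you then write down an explicit preimage $t_\phi$ of any $\phi$. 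The structural input is the same PBW/adjunction fact the paper uses, but your version makes the isomorphism explicit through $\varphi_n$ itself; what this buys is that no dimension count is needed (so the argument does not rely on $X_n$ being finite dimensional, only on $\bU(\g_n^<)$ being a finite-dimensional Grassmann algebra), and it simultaneously re-proves injectivity of $\varphi_n$. The only points requiring care are bookkeeping ones you already flag: the order in the PBW factorization must match the side on which $\bU(\g_n^{\geq})$-linearity is imposed in $\Hom_{\g_n^{\geq}}(\bU(\g_n),X_n)$, and the sign $(-1)^{p(t)p(u)}$ from the definition of $\varphi$ should be absorbed into $\epsilon(\underline e)$; neither affects the validity of the argument.
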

\begin{proof}
Since $\varphi_n$ is an embedding, we just have to check that the dimensions match. This follows from the following
\begin{align*}
\cT(X_n) & := \Lambda(\xi_1,\ldots, \xi_n) \otimes_\C X_n \\
& \cong \Lambda(\partial_1,\ldots, \partial_n)^* \otimes_\C X_n \\
& \cong \Hom_{\C}(\Lambda(\partial_1, \ldots, \partial_n) \, , X_n) \\
& \cong \Hom_{\C}(\Lambda(\partial_1, \ldots, \partial_n) \, , \Hom_{\g_n^\geq }(\bU(\g_n^\geq) \, ,  X_n)) \\
& \cong \Hom_{\g_n^{\geq}}(\Lambda(\partial_1,\ldots, \partial_i)\otimes_\C \bU(\g_n^\geq) \, , X_n) \\
& \cong \Hom_{\g_n^{\geq}}(\bU(\g_n) \, , X_n). \qedhere
\end{align*}
\end{proof}

\begin{rem}
In the infinite-dimensional case, the second isomorphism displayed above is just a natural embedding from $\Lambda(\partial)^* \otimes_\C X$ to $\Hom_{\C}(\Lambda(\partial) \, , X)$.
\end{rem}

Consider the submodule $\bigcup \im\varphi_n$ of $\Coind^{\g}_{\g^\geq}(X)$. It follows from Lemma~\ref{lem:tens.fields=coind} that\break $\bigcup \im\varphi_n\cong \bigcup \cT(X_i)$ is a submodule of $\cT(X)$. But since every element of $\cT(X)$ lies in some $\cT(X_n)$ for $n\gg 0$, we have the following:
    \[
\cT(X) = \bigcup \cT(X_n)\cong\bigcup \im\varphi_n = \bigcup \Coind^{\g_n}_{\g_n^\geq}(X_n)\subseteq \Coind^{\g}_{\g^\geq}(X).
    \]

For any finite-dimensional $\g_n^0$-module $X_n$, we have
\begin{align*}
\cT(X_n^*) & \cong  \Hom_{\g_n^{\geq}} (\bU(\g_n) \, ,  \Hom_\C(X_n \, , \C)) \\
&  \cong \Hom_\C (\bU(\g_n)\otimes_{\bU(\g_n^\geq)} X_n \, , \C)  = (\Ind^{\g_n}_{\g_n^{\geq}} X_n)^* = K_n^+ (X_n)^*.
\end{align*}
In particular, $\cT(X_n)\cong K_n^+(X_n^*)^*$. Using this fact one can prove the following result.

\begin{prop}\label{prop:TV_simple}
If $(\lambda,\mu)\neq ((\lambda_1), \emptyset)$, then $\cT(V_{\lambda, \mu})$ is a locally simple $\g$-module, and hence simple.
\end{prop}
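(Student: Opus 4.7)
The plan is to reduce Proposition~\ref{prop:TV_simple} to the finite-dimensional setting via the duality $\cT(X_n)\cong K_n^+(X_n^*)^*$ recorded just before the statement, and then apply Serganova's typicality criterion (Remark~\ref{rem Ser W}, or equivalently the finite-dimensional instance of Theorem~\ref{prop:K+.simple}). Writing $V_{\lambda,\mu}=\varinjlim_n X_n$ as in the proof of Theorem~\ref{prop:K+.simple}, the identification $\cT(V_{\lambda,\mu})=\bigcup_n \cT(X_n)$ reduces the problem to showing that $\cT(X_n)$ is simple over $\g_n$ for every $n\gg 0$: once this is established, a nonzero $\g$-submodule $N\subseteq \cT(V_{\lambda,\mu})$ meets $\cT(X_n)$ nontrivially for some large $n$, and $\g_n$-simplicity forces $\cT(X_n)\subseteq N$; the same argument for all larger $m$ then yields $N=\cT(V_{\lambda,\mu})$, which gives both local simplicity and simplicity.

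To identify the relevant $K_n^+$, recall from the proof of Theorem~\ref{prop:K+.simple} that the $\fb(<)_n^0$-highest weight of $X_n$ is
\[
\nu_n=\lambda_1\varepsilon_1+\cdots+\lambda_k\varepsilon_k-\mu_l\varepsilon_{n-l+1}-\cdots-\mu_1\varepsilon_n.
\]
Applying $-w_0$ (the longest element of the Weyl group of $\fgl(n)$) then shows that $X_n^*$ is the simple $\fgl(n)$-module of highest weight
\[
-w_0\nu_n=\mu_1\varepsilon_1+\cdots+\mu_l\varepsilon_l-\lambda_k\varepsilon_{n-k+1}-\cdots-\lambda_1\varepsilon_n,
\]
which is precisely the $n$-th approximation $(V_{\mu,\lambda})_n$ of $V_{\mu,\lambda}$. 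Thus $\cT(X_n)\cong K_n^+((V_{\mu,\lambda})_n)^*$, and simplicity of $\cT(X_n)$ is equivalent to simplicity of $K_n^+((V_{\mu,\lambda})_n)$.

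By Remark~\ref{rem Ser W}, $K_n^+((V_{\mu,\lambda})_n)$ is simple precisely when $-w_0\nu_n$ is typical, that is, not of the form $a\varepsilon_i+\varepsilon_{i+1}+\cdots+\varepsilon_n$ for any $a\in\C$ and $i\in\{1,\ldots,n\}$. For $n>k+l$, the positive and negative parts of $-w_0\nu_n$ sit in disjoint coordinate ranges separated by zeros. If $i<n$, the coefficient of $\varepsilon_n$ in any atypical weight is $1$, while in $-w_0\nu_n$ it is $-\lambda_1\leq 0$, a contradiction. The only remaining possibility is $i=n$, in which case $-w_0\nu_n=a\varepsilon_n$; this forces $\mu=\emptyset$ and $\lambda_j=0$ for all $j\geq 2$, i.e., $(\lambda,\mu)=((\lambda_1),\emptyset)$. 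So under the hypothesis $(\lambda,\mu)\neq((\lambda_1),\emptyset)$, the weight $-w_0\nu_n$ is typical for all sufficiently large $n$, so $\cT(X_n)$ is $\g_n$-simple for $n\gg 0$, and the claim follows.

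The main bookkeeping obstacle is the case analysis pinning down exactly when $-w_0\nu_n$ is atypical: one must take $n$ large enough so the positive and negative coefficient ranges are separated by zeros, and must treat the boundary case $i=n$ (where the ``tail'' $\varepsilon_{i+1}+\cdots+\varepsilon_n$ is empty) with care, since that is exactly the case that produces the excluded family $((\lambda_1),\emptyset)$. Everything else is a mechanical translation between the infinite-dimensional statement and its finite-dimensional approximations via the duality $\cT(X_n)\cong K_n^+(X_n^*)^*$.
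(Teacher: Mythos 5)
Your proposal is correct and follows essentially the same route as the paper: decompose $V_{\lambda,\mu}=\bigcup X_n$, use the duality $\cT(X_n)\cong K_n^+(X_n^*)^*$, and invoke Serganova's typicality criterion (Remark~\ref{rem Ser W}) to see that $X_n^*$ has atypical highest weight exactly when $(\lambda,\mu)=((\lambda_1),\emptyset)$. The only difference is that you spell out the atypicality case analysis and the ``locally simple implies simple'' limit argument, which the paper leaves implicit.
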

\begin{proof}
Let $\lambda = (\lambda_1\geq \cdots \geq \lambda_k)$ and $\mu = (\mu_1\geq \cdots \geq \mu_l)$. Recall that the simple $\g^0$-module $X:=V_{\lambda, \mu}$ has a decomposition $X = \bigcup X_n$, where, for $n\gg 0$, $X_n$ is the $\g_n^0$-module
	\[
X_n := V_{\fb(<)_n^0}(\lambda_1\varepsilon_1 +\cdots + \lambda_k\varepsilon_k - \mu_l\varepsilon_{n-l}-\cdots -\mu_1\varepsilon_n).
	\]
Now $\cT(X_n)\cong K_n^+(X_n^*)^*$, where
	\[
X_n^* \cong V_{\fb(<)_n^0}(\mu_1\varepsilon_1 +\cdots + \mu_l\varepsilon_l - \lambda_k\varepsilon_{n-k}-\cdots -\lambda_1\varepsilon_n).
	\]
It follows from Remark~\ref{rem Ser W} that the highest weight $\mu_1\varepsilon_1 +\cdots + \mu_l\varepsilon_l - \lambda_k\varepsilon_{n-k}-\cdots -\lambda_1\varepsilon_n$ of $X_n^*$ is atypical if and only if $(\lambda,\mu) = ((\lambda_1), \emptyset)$. Hence,  $\cT(X_n)\cong K_n^+(X_n^*)^*$ is simple if and only if $(\lambda,\mu)\neq ((\lambda_1), \emptyset)$. Now the claim follows from the fact that $\cT(X)\cong \bigcup \cT(X_n)$. 
\end{proof}

The following theorem gives a realization of the simple modules in $\bbT_W$ as tensor fields.

\begin{theo}\label{thm:simples_are_tensor_modules}
For any $(\lambda,\mu)$ we have that $L_{\lambda, \mu}^-$ is isomorphic to a submodule of $\cT(V_{\lambda, \mu})$. Moreover, if  $(\lambda,\mu)\neq ((\lambda_1), \emptyset)$, then $ L_{\lambda, \mu}^-\cong \cT(V_{\lambda, \mu})$.
\end{theo}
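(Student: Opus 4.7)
The plan is to exhibit a $\fb(\prec)^{\min}$-highest weight vector inside $\cT(V_{\lambda,\mu})$ whose cyclic submodule is already simple. Fix a $\fb(\prec)^0$-highest weight vector $v\in V_{\lambda,\mu}$; its weight $\chi$ is given by~(\ref{eq hw}). Viewing $v$ as $1\otimes v$ in the degree-zero component $\C\otimes V_{\lambda,\mu}\subseteq\cT(V_{\lambda,\mu})$ and applying the action formula, one computes $\partial_j\cdot v=\partial_j(1)\,v=0$ for every $j\in I$, so $v$ is annihilated by $\g^<$. Together with $\n(\prec)^0\cdot v=0$ (which holds inside $V_{\lambda,\mu}$), this shows that $v$ is a $\fb(\prec)^{\min}$-highest weight vector of weight $\chi$ in $\cT(V_{\lambda,\mu})$.

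Set $N:=\bU(\g)v\subseteq\cT(V_{\lambda,\mu})$. Because $\g^<$ annihilates $v$, the PBW theorem gives $N=\bU(\g^{>})V_{\lambda,\mu}$, so $N$ is a nonzero highest weight quotient of $K^-(V_{\lambda,\mu})$. In particular, $L^-_{\lambda,\mu}$ is its unique simple quotient, and the whole first statement of the theorem will follow once we show that $N$ is simple.

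The key input for this will be the identity $\Psi(\cT(V_{\lambda,\mu}))=V_{\lambda,\mu}$. I verify this directly: writing $t=\sum f_iv_i\in\Lambda(\infty)\otimes V_{\lambda,\mu}$ with the $v_i$ linearly independent, the action formula gives $\partial_j\cdot t=\sum\partial_j(f_i)\,v_i$, so $\g^<\cdot t=0$ forces every $f_i$ to be a scalar. Now suppose $0\neq N'\subseteq N$ is a submodule. By Lemma~\ref{em:TX_in_tT} we have $\cT(V_{\lambda,\mu})\in\bbT_W$, hence $N'\in\bbT_W^{\leq}$, and so $\Psi(N')\neq 0$ by Proposition~\ref{prop:inv.non-zero}. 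But $\Psi(N')$ is a nonzero $\g^0$-submodule of $\Psi(\cT(V_{\lambda,\mu}))=V_{\lambda,\mu}$, which is simple as a $\g^0$-module, so $\Psi(N')=V_{\lambda,\mu}\ni v$. This yields $N=\bU(\g)v\subseteq N'$, forcing $N'=N$. Thus $N$ is simple, and $L^-_{\lambda,\mu}\cong N\hookrightarrow\cT(V_{\lambda,\mu})$, proving the first claim.

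For the second statement, Proposition~\ref{prop:TV_simple} gives that $\cT(V_{\lambda,\mu})$ is itself simple whenever $(\lambda,\mu)\neq((\lambda_1),\emptyset)$; combined with the nonzero embedding from the first statement, this forces $L^-_{\lambda,\mu}\cong\cT(V_{\lambda,\mu})$. The only real obstacle in the argument is the computation $\Psi(\cT(V_{\lambda,\mu}))=V_{\lambda,\mu}$—everything else is a formal consequence of Proposition~\ref{prop:inv.non-zero} together with the simplicity of $V_{\lambda,\mu}$ as a $\g^0$-module.
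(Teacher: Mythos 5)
Your proof is correct, but it takes a genuinely different route from the paper's for the main claim. The paper argues by finite-dimensional approximation: it uses the duality $\cT(V_{\lambda,\mu,n})\cong K_n^+(V_{\lambda,\mu,n}^*)^*$, dualizes the exact sequence $0\to Q_n\to K_n^+(V_{\lambda,\mu,n}^*)\to L_n^+(V_{\lambda,\mu,n}^*)\to 0$ to identify the $\g_n$-submodule generated by the highest weight vector with $L_n^+(V_{\lambda,\mu,n}^*)^*\cong L_n^-(V_{\lambda,\mu,n})$, and then passes to the direct limit via local simplicity. You instead work intrinsically at infinity: the computation $\Psi(\cT(V_{\lambda,\mu}))=\C\otimes V_{\lambda,\mu}$ (which is correct, since $\partial_j\cdot(fv)=\partial_j(f)v$ and the common kernel of all $\partial_j$ in $\Lambda(\infty)$ is $\C$), combined with Proposition~\ref{prop:inv.non-zero} and the $\g^0$-simplicity of $V_{\lambda,\mu}$, shows that every nonzero submodule of $N=\bU(\g)(1\otimes v)$ contains $v$, so $N$ is simple; being a nonzero quotient of $K^-(V_{\lambda,\mu})$ it must be $L^-_{\lambda,\mu}$. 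This buys a more self-contained argument for the embedding (no dualization and no appeal to Serganova's $W(n)$ results at that stage), and in fact it gives slightly more: the same $\Psi$-argument applied to an arbitrary nonzero submodule of $\cT(V_{\lambda,\mu})$ shows that $\cT(V_{\lambda,\mu})$ has simple socle isomorphic to $L^-_{\lambda,\mu}$. The paper's route stays closer to the finite-dimensional picture, which is in any case needed for Proposition~\ref{prop:TV_simple}; note that both you and the paper rely on that proposition (hence on the $W(n)$ typicality results) for the second statement. One step worth making explicit in your write-up: passing from $\cT(V_{\lambda,\mu})\in\bbT_W$ (Lemma~\ref{em:TX_in_tT}) to $N'\in\bbT_W^{\leq}$ uses closure under submodules; integrability and the l.a.c. are inherited trivially, and the $\Z$-grading is inherited because submodules of $\h$-weight modules are weight submodules and the grading of $\cT(V_{\lambda,\mu})$ is read off from the weights (alternatively, the nonvanishing $\Psi(N')\neq 0$ only needs the l.a.c. for $\g^{\leq}$, which is all the proof of the first part of Proposition~\ref{prop:inv.non-zero} uses), so this is harmless.
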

\begin{proof} 
For $n > k+l$, we denote by $V_{\lambda,\mu,n}$ the  $\g_n^0$-module $V_{\fb(\prec)_n^0}(\nu)$ with  highest weight  $\nu := \sum_{i\in \Zp}\lambda_i \varepsilon_{2i-1} -\sum_{i\in  \Zp}\mu_i\varepsilon_{2i}$. We notice that for all $n>k+l$ we have a short exact sequence of $\g_n$-modules
    \[
0\to Q_n(V_{\lambda, \mu,n}^*)\to K_n^+(V_{\lambda, \mu,n}^*)\to L_n^+(V_{\lambda, \mu,n}^*)\to 0
    \]
which yields the following short exact sequence of $\g_n$-modules
    \[
0\to L_n^+(V_{\lambda, \mu,n}^*)^* \to K_n^+(V_{\lambda, \mu,n}^*)^*\cong \cT(V_{\lambda, \mu,n}) \to Q_n(V_{\lambda, \mu,n}^*)^* \to 0.
    \]
In particular, if $v$ denotes the $\fb(\prec)_n^0$-highest weight of $V_{\lambda, \mu,n}^*$, then $L_n^+(V_{\lambda, \mu,n}^*)^*$ is the submodule of $\cT(V_{\lambda, \mu,n})$ generated by  $v$. Since the weight of $v$ in $\cT(V_{\lambda, \mu,n})\subset \cT(V_{\lambda, \mu})$ is $\nu$ and $\cT(V_{\lambda, \mu,n})_{\nu} = (V_{\lambda, \mu,n})_{\nu}=(V_{\lambda, \mu})_{\nu}$, we conclude that $v$ is a $\fb(\prec)_n^0$-highest weight of $V_{\lambda, \mu,n}$ and hence  a $\fb\left(\prec\right)_n^{\min}$-highest weight vector in $\cT(V_{\lambda, \mu,n})$. Thus we have isomorphisms of $\g_n$-modules $L_n^+(V_{\lambda, \mu,n}^*)^*\cong \bU(\g_n)v\cong L_n^-(V_{\lambda, \mu,n})$, for all $n\gg 0$. This implies that $\bU(\g)v$ is locally simple and hence isomorphic to $L^-_{\lambda, \mu}$. Finally, the latter statement follows from Proposition~\ref{prop:TV_simple}.
\end{proof}

\section{Injective modules}\label{Section 7}
In this section we prove that the category $\bbT_W$ has enough injective objects. Moreover, for each simple module $L_{\lambda,\mu}^-$ in $\bbT_W$, we use Theorem~\ref{thm:simples_are_tensor_modules} to provide an explicit injective module in $\bbT_W$ that contains $L_{\lambda,\mu}^-$. Let $\g\Mod$ (resp. $\g^0\Mod$) be the category of all $\g$-modules (resp. $\g^0\Mod$), $\Int_{\g^0}$ be full subcategory of $\g^0\Mod$ consisting of integrable $\g^0$-modules, and $\Int_{\g,\g^0}$ be the full subcategory of $\g\Mod$ consisting of $\g^0$-integrable modules. Define the functors $\Gamma_{\g,\g^0}:\g\Mod\to \Int_{\g,\g^0}$ where $\Gamma_{\g,\g^0}(M) = \{m\in M\mid \dim \Span\{g^i m\mid i\geq 0\}<\infty,\ \forall g\in \g^0\}$ is the largest submodule of $M$ that is $\g^0$-integrable,  and $\Theta:\Int_{\g,\g^0} \to\bbT_W$ where $\Theta(M) = \bigcup_{n>0} M^{\ft_n}$ is the largest submodule of $M$ that satisfies the l.a.c. for $\g$. Let $\Gamma:\g\Mod\to \bbT_W$ denote the composition $\Theta \circ \Gamma_{\g,\g^0}$.

Both functors $\Gamma_{\g,\g^0}$ and $\Theta$ are left exact and are right adjoint to the respective inclusion functors $\Int_{\g,\g^0}\subset \g\Mod$ and $\bbT_W\subset \Int_{\g,\g^0}$. Now we have the following result:

\begin{prop}\label{prop:enough_inj}
If $I\in \g\Mod$ is injective, then $\Gamma(I)$ is injective in $\bbT_W$. Moreover, the category $\bbT_W$ has enough injectives.
\end{prop}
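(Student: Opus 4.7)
The plan is to exploit a general categorical principle: the right adjoint of an exact functor preserves injective objects. First I would observe that the functor $\Gamma=\Theta\circ \Gamma_{\g,\g^0}$ is right adjoint to the full inclusion $\iota:\bbT_W\hookrightarrow \g\Mod$, since by hypothesis $\Gamma_{\g,\g^0}$ is right adjoint to $\Int_{\g,\g^0}\hookrightarrow \g\Mod$ and $\Theta$ is right adjoint to $\bbT_W\hookrightarrow \Int_{\g,\g^0}$, and composition of right adjoints is right adjoint to the composition. Thus for any $M\in \bbT_W$ and $N\in \g\Mod$ we have a natural isomorphism
\begin{equation*}
\Hom_{\bbT_W}(M,\Gamma(N))\cong \Hom_{\g\Mod}(M,N).
\end{equation*}

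Next I would verify that $\iota$ is exact. Since $\bbT_W$ is a full abelian subcategory of $\g\Mod$, one only has to check that it is closed under kernels and cokernels taken in $\g\Mod$; this is immediate from Definition~\ref{def:T_W-Cat}, as conditions (1)--(3) (compatibility with the $\Z$-grading, integrability over $\g^0$, and the l.a.c.\ for $\g$) are preserved by passage to submodules and quotients in $\g\Mod$.

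With these two observations in place, the first assertion follows formally. Let $I\in \g\Mod$ be injective. For any short exact sequence $0\to M'\to M\to M''\to 0$ in $\bbT_W$, the sequence remains exact in $\g\Mod$ by exactness of $\iota$, so $\Hom_{\g\Mod}(-,I)$ sends it to a short exact sequence of abelian groups. By the adjunction displayed above, the same is true of $\Hom_{\bbT_W}(-,\Gamma(I))$, so $\Gamma(I)$ is injective in $\bbT_W$.

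For the second assertion, let $M\in \bbT_W$. Since $\g\Mod$ is a module category over $\bU(\g)$ it has enough injectives, so we can fix an embedding $M\hookrightarrow I$ with $I$ injective in $\g\Mod$. Because $M$ itself is $\g^0$-integrable and satisfies the l.a.c.\ for $\g$, every element of the image of $M$ in $I$ lies in $\Gamma_{\g,\g^0}(I)$ and then in $\Theta(\Gamma_{\g,\g^0}(I))=\Gamma(I)$; hence the embedding factors as $M\hookrightarrow \Gamma(I)$. By the first part $\Gamma(I)$ is injective in $\bbT_W$, so $\bbT_W$ has enough injectives. The main conceptual point is simply the adjunction together with the exactness of $\iota$; I do not foresee a serious obstacle, only the bookkeeping of checking that the closure properties defining $\bbT_W$ are truly preserved under subquotients, which is routine from the definitions.
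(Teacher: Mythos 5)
Your proposal is correct and follows essentially the same route as the paper: the first claim via the adjunction between the inclusion $\bbT_W\subset\g\Mod$ and $\Gamma=\Theta\circ\Gamma_{\g,\g^0}$, and the second by embedding $M$ into an injective of $\g\Mod$ and observing that the embedding factors through $\Gamma(I)$ (the paper phrases this factorization via the adjunction isomorphism applied to the injective hull, which is the same argument). Your explicit remark on exactness of the inclusion is a reasonable addition that the paper leaves implicit.
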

\begin{proof}
The former statement follows from the fact that $\Gamma$ is right adjoint to the inclusion functor $\bbT_W\subset \g\Mod$. To prove the latter statement, take $M\in \bbT_W$ and notice that if $I_M$ is the  injective hull of $M$ in $\g\Mod$, then the natural isomorphism
    \[
\Hom_\g (M \, ,I_M)\cong \Hom_\g (M \, ,\Gamma(I_M))
    \]
implies that the inclusion in $\Hom_\g (M \, ,I_M)$ gives an inclusion in $\Hom_\g (M \, ,\Gamma(I_M))$.
\end{proof}

\begin{cor}
If $I\in \g^0\Mod$ is injective, then $\Gamma (\Coind_{\g^0}^{\g}(I))\in \bbT_W$ is injective.
\end{cor}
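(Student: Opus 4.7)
The plan is to show that $\Coind_{\g^0}^{\g}(I)$ is injective in $\g\Mod$, and then invoke Proposition~\ref{prop:enough_inj} to conclude.

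First I would recall the standard Frobenius-type reciprocity: the coinduction functor
\[
\Coind_{\g^0}^{\g}(-) = \Hom_{\g^0}(\bU(\g), -) : \g^0\Mod \to \g\Mod
\]
is right adjoint to the restriction functor $\mathrm{R}_{\g^0} : \g\Mod \to \g^0\Mod$, i.e., for any $M \in \g\Mod$ and any $I \in \g^0\Mod$ there is a natural isomorphism
\[
\Hom_\g\bigl(M,\, \Coind_{\g^0}^{\g}(I)\bigr) \cong \Hom_{\g^0}\bigl(M|_{\g^0},\, I\bigr).
\]

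The key observation is that restriction $\mathrm{R}_{\g^0}$ is exact (it does not change the underlying vector space or the $\g^0$-action). A general categorical fact states that a right adjoint to an exact functor sends injective objects to injective objects. Applying this: since $I$ is injective in $\g^0\Mod$, the functor $\Hom_{\g^0}(-,I)$ is exact on $\g^0\Mod$, and composing with the exact functor $\mathrm{R}_{\g^0}$ shows that $\Hom_\g(-,\Coind_{\g^0}^{\g}(I))$ is exact on $\g\Mod$. Hence $\Coind_{\g^0}^{\g}(I)$ is injective in $\g\Mod$.

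The second step is immediate: Proposition~\ref{prop:enough_inj} asserts that $\Gamma$ carries injectives of $\g\Mod$ to injectives of $\bbT_W$. Applying this to $\Coind_{\g^0}^{\g}(I)$ yields that $\Gamma\bigl(\Coind_{\g^0}^{\g}(I)\bigr)$ is injective in $\bbT_W$, as required. There is no real obstacle here; the statement is an assembly of two facts (preservation of injectives under right adjoints of exact functors, plus the already-proved Proposition~\ref{prop:enough_inj}). The only minor care is to make sure the adjunction and the exactness of restriction are spelled out correctly, which is entirely routine.
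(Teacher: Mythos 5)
Your proof is correct and follows the same route as the paper: the paper also deduces the corollary from the fact that $\Coind_{\g^0}^{\g}(-)$ sends injectives to injectives (citing Knapp's Corollary~6.4 rather than spelling out the adjunction-with-exact-restriction argument as you do) combined with Proposition~\ref{prop:enough_inj}. Your explicit verification of the standard fact is fine and adds nothing problematic.
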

\begin{proof}
This is because the functor $\Coind_{\g^0}^{\g}(-):\g^0\Mod\to \g\Mod$ sends injectives to injectives \cite[Corollary~6.4]{Kna88}.
\end{proof}

\begin{prop}
If $I\in \Int_{\g^0}$ is injective, then $\Gamma_{\g,\g^0} (\Coind_{\g^0}^{\g}(I))$ is injective in $\Int_{\g,\g^0}$.
\end{prop}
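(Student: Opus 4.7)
The plan is to show directly that $\Hom_{\Int_{\g,\g^0}}(-,J)$ is an exact functor, where $J := \Gamma_{\g,\g^0}(\Coind_{\g^0}^{\g}(I))$. The key is to produce a natural isomorphism identifying this functor with the composition of restriction to $\g^0$ with $\Hom_{\Int_{\g^0}}(-, I)$, both of which are exact by assumption on $I$ and by exactness of restriction.

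First, for any $M \in \Int_{\g,\g^0}$, I would establish the chain of natural isomorphisms
\begin{align*}
\Hom_{\Int_{\g,\g^0}}(M, J) &\cong \Hom_{\g}(M,\, \Gamma_{\g,\g^0}(\Coind_{\g^0}^{\g}(I))) \\
&\cong \Hom_{\g}(M,\, \Coind_{\g^0}^{\g}(I)) \\
&\cong \Hom_{\g^0}(M|_{\g^0},\, I) \\
&\cong \Hom_{\Int_{\g^0}}(M|_{\g^0},\, I).
\end{align*}
The first identification is by definition of the category $\Int_{\g,\g^0}$ as a full subcategory. The second uses the adjunction between $\Gamma_{\g,\g^0}$ and the inclusion $\Int_{\g,\g^0}\subset \g\Mod$: since $M$ already lies in $\Int_{\g,\g^0}$, every $\g$-homomorphism $M \to \Coind_{\g^0}^{\g}(I)$ lands inside the maximal $\g^0$-integrable submodule. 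The third isomorphism is Frobenius reciprocity for coinduction, and the fourth merely records that the restriction $M|_{\g^0}$ already lies in $\Int_{\g^0}$, so the ambient hom space coincides with the one in the subcategory.

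Next, I would argue that the resulting functor $\Hom_{\Int_{\g^0}}((-)|_{\g^0}, I)$ on $\Int_{\g,\g^0}$ is exact. Indeed, restriction $\Int_{\g,\g^0} \to \Int_{\g^0}$ is exact (kernels and cokernels of $\g$-module maps are computed on the underlying vector spaces, and $\g^0$-integrability passes to subquotients), and by hypothesis $I$ is injective in $\Int_{\g^0}$, so $\Hom_{\Int_{\g^0}}(-, I)$ is exact on $\Int_{\g^0}$. Composing, we conclude that $\Hom_{\Int_{\g,\g^0}}(-,J)$ is exact, i.e.\ $J$ is injective in $\Int_{\g,\g^0}$.

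The only potentially delicate point is the application of Frobenius reciprocity for coinduction in this infinite-dimensional super setting, but this is standard: the isomorphism $\varphi \mapsto (m \mapsto \varphi(m)(1))$ from $\Hom_\g(M, \Hom_{\g^0}(\bU(\g), I))$ to $\Hom_{\g^0}(M, I)$ makes sense for arbitrary modules and requires no finiteness hypothesis. Everything else is a formal manipulation of adjunctions, so I do not foresee a serious obstruction.
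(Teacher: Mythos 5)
Your proposal is correct, and it takes a genuinely different (and cleaner) route than the paper. The paper argues with first extension groups: it uses the restriction--coinduction adjunction to get $\Ext^1_\g(M,\Coind_{\g^0}^{\g}(I))\cong\Ext^1_{\g^0}(M,I)=0$ (invoking closure of $\Int_{\g^0}$ under extensions from \cite{PS11}), and then runs the long exact sequence of $0\to\Gamma_{\g,\g^0}(\Coind_{\g^0}^{\g}(I))\to\Coind_{\g^0}^{\g}(I)\to Q\to 0$, using $\Hom_\g(M,Q)=0$ to conclude $\Ext^1_\g(M,\Gamma_{\g,\g^0}(\Coind_{\g^0}^{\g}(I)))=0$. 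You instead show that $\Gamma_{\g,\g^0}\circ\Coind_{\g^0}^{\g}$ is right adjoint to the exact restriction functor $\Int_{\g,\g^0}\to\Int_{\g^0}$: the key observation that a $\g$-map from a $\g^0$-integrable module into $\Coind_{\g^0}^{\g}(I)$ automatically lands in the maximal integrable submodule is exactly right, and the rest is the tensor--hom adjunction, which indeed needs no finiteness hypotheses. Your version buys several things: it avoids Ext groups altogether, hence does not need the Eckmann--Shapiro-type identification of $\Ext^1_\g$ with $\Ext^1_{\g^0}$ (which implicitly uses freeness of $\bU(\g)$ over $\bU(\g^0)$ via PBW), it does not need the closure-under-extensions input from \cite{PS11}, and it makes transparent why the truncation $\Gamma_{\g,\g^0}$ is harmless. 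What the paper's argument buys in exchange is slightly stronger information, namely vanishing of $\Ext^1$ computed in the ambient category $\g\Mod$ against all $\g^0$-integrable $M$, not just injectivity inside $\Int_{\g,\g^0}$. The only point you should make explicit is the (easy) fact that monomorphisms and short exact sequences in $\Int_{\g,\g^0}$ and $\Int_{\g^0}$ are the ones inherited from the ambient module categories, since both subcategories are closed under submodules and quotients; with that said, your final exactness argument is complete.
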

\begin{proof}
Let $M\in \Int_{\g^0}$. The adjunction between restriction and coinduction gives us that
    \[
\Ext_\g^1(M, \Coind_{\g^0}^{\g}(I))\cong \Ext_{\g^0}^1(M,I)=\Ext_{\Int_{\g^0}}^1(M,I)=0,
    \]
where we are using that $\Int_{\g^0}$ is closed under taking extensions \cite{PS11}. Now consider the short exact sequence
    \[
0\to \Gamma_{\g,\g^0} (\Coind_{\g^0}^{\g}(I)) \to \Coind_{\g^0}^{\g}(I) \to Q := \Coind_{\g^0}^{\g}(I)/ \Gamma_{\g,\g^0} (\Coind_{\g^0}^{\g}(I))\to 0
    \]
and the corresponding long exact sequence 
\begin{multline*}
0\to \Hom_\g(M,\Gamma_{\g,\g^0} (\Coind_{\g^0}^{\g}(I))) \to \Hom_\g(M,\Coind_{\g^0}^{\g}(I)) \to  \\  
\to \Hom_\g(M,Q) \to \Ext_\g^1(M,\Gamma_{\g,\g^0} (\Coind_{\g^0}^{\g}(I))) \to \Ext_\g^1(M,\Coind_{\g^0}^{\g}(I))=0
\end{multline*}
Since $M\in \Int_{\g^0}$, then $\Hom_\g(M,Q)=0$. Thus $\Ext_\g^1(M,\Gamma_{\g,\g^0} (\Coind_{\g^0}^{\g}(I)))=0$, which proves the statement.
\end{proof}

Let $\Gamma_{\g^0}:\g^0\Mod\to\Int_{\g^0}$ be the functor defined by $\Gamma_{\g^0}(M) = \{m\in M\mid \dim \Span\{g^i m\mid i\geq 0\}<\infty,\ \forall g\in \g^0\}$.

\begin{cor}
For every $M\in \Int_{\g^0}$, we have that  $\Gamma_{\g,\g^0} (\Coind_{\g^0}^{\g}(\Gamma_{\g^0}(M^*)))$ is injective in $\Int_{\g,\g^0}$.
\end{cor}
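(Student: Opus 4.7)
The strategy is to derive this corollary as an immediate application of the preceding proposition by taking $I := \Gamma_{\g^0}(M^*)\in\Int_{\g^0}$. Under this reduction, the entire task is to verify that $\Gamma_{\g^0}(M^*)$ is injective in $\Int_{\g^0}$; the preceding proposition then produces the claimed injectivity in $\Int_{\g,\g^0}$ without further work.

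To establish the injectivity of $\Gamma_{\g^0}(M^*)$ in $\Int_{\g^0}$, I would invoke the adjoint functor principle twice. First, the inclusion $\Int_{\g^0}\hookrightarrow\g^0\Mod$ is exact with right adjoint $\Gamma_{\g^0}$, so $\Gamma_{\g^0}$ carries injectives in $\g^0\Mod$ to injectives in $\Int_{\g^0}$. It therefore suffices to prove that $M^*$ is injective in the larger category $\g^0\Mod$. Second, the latter is the standard injectivity of the contragredient module over the Hopf algebra $\bU(\g^0)$: via the natural isomorphism
\[
\Hom_{\g^0}(N,M^*)\;\cong\;\Hom_\C(N\otimes_\C M,\C)^{\g^0}
\]
combined with the exactness of $N\mapsto N\otimes M$ over $\C$, one reduces the injectivity of $M^*$ to a standard homological fact for duals of modules over Hopf algebras, of the sort recorded in \cite{Kna88}.

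Putting these observations together, $\Gamma_{\g^0}(M^*)$ is an injective object of $\Int_{\g^0}$, and applying the preceding proposition to this $I$ yields the corollary. The main (and essentially only) technical point of the argument is the injectivity of $M^*$ in $\g^0\Mod$; everything else is a purely formal chain of adjunction arguments, exploiting the fact—already used in the proof of the preceding proposition—that $\Gamma_{\g,\g^0}\circ\Coind_{\g^0}^{\g}$ is the right adjoint of the exact restriction functor $\Int_{\g,\g^0}\to\Int_{\g^0}$, and right adjoints of exact functors preserve injective objects.
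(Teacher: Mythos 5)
Your reduction is the same as the paper's: take $I:=\Gamma_{\g^0}(M^*)$ and feed it into the preceding proposition, so everything hinges on $\Gamma_{\g^0}(M^*)$ being injective in $\Int_{\g^0}$. But that is precisely where your argument has a genuine gap. The paper obtains this input by citing \cite[Proposition~3.1]{PS11}, which is a substantive result about integrable modules over locally finite Lie algebras; you instead try to get it formally from the claim that $M^*$ is injective in all of $\g^0\Mod$, and that claim is false in general. Indeed, $\Hom_{\g^0}(N,M^*)\cong \bigl((N\otimes_\C M)_{\g^0}\bigr)^*$, where $(-)_{\g^0}$ denotes coinvariants; the exactness of $-\otimes_\C M$ and of $\Hom_\C(-,\C)$ does not help because coinvariants (equivalently, the invariants appearing in your displayed isomorphism) are only right (resp.\ left) exact, so $\Hom_{\g^0}(-,M^*)$ is left exact but not exact. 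Injectivity of $M^*$ in $\g^0\Mod$ is equivalent to flatness of $M$ over $\bU(\g^0)$, which fails for most integrable modules: already for $M=\C$ one has $M^*=\C$ and $\Ext^1_{\g^0}(\C,\C)\cong H^1(\fgl(\infty),\C)\neq 0$, since the trace character of $\fgl(\infty)$ produces a non-split self-extension of the trivial module (which is moreover integrable). So the ``standard injectivity of the contragredient over a Hopf algebra'' you invoke does not exist; what \cite{Kna88} gives is injectivity of coinduced modules such as $\Hom_\C(\bU(\g^0),X)$, not of the full dual of an arbitrary module.

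Your step (a) (that $\Gamma_{\g^0}$, being right adjoint to the exact inclusion $\Int_{\g^0}\subset\g^0\Mod$, preserves injectives) is fine, as is the remark that $\Gamma_{\g,\g^0}\circ\Coind_{\g^0}^{\g}$ is right adjoint to the exact restriction $\Int_{\g,\g^0}\to\Int_{\g^0}$; but these formal devices cannot replace the missing input. The correct fix is exactly the paper's one-line proof: quote \cite[Proposition~3.1]{PS11}, which asserts that $\Gamma_{\g^0}(M^*)$ is injective in $\Int_{\g^0}$ for integrable $M$ (proved there by arguments specific to these Lie algebras, not by the flatness/duality mechanism you propose), and then apply the preceding proposition.
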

\begin{proof}
It follows from \cite[Proposition~3.1]{PS11}.
\end{proof}

\begin{cor}\label{cor:constructing_inj}
If $I\in \Int_{\g^0}$ is injective, then $\Gamma (\Coind_{\g^0}^{\g}(I))$ is injective in $\bbT_W$. In particular, for every $I\in \Int_{\g^0}$ we have that $\Gamma (\Coind_{\g^0}^{\g}(\Gamma_{\g^0}(I^*)))$ is injective in $\bbT_W$.
\end{cor}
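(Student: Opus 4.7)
The plan is to factor $\Gamma = \Theta \circ \Gamma_{\g,\g^0}$ and chain together two ``right adjoint preserves injectives'' observations. The preceding Proposition already supplies that $\Gamma_{\g,\g^0}(\Coind_{\g^0}^{\g}(I))$ is injective in $\Int_{\g,\g^0}$ whenever $I\in\Int_{\g^0}$ is injective, so the only missing ingredient for the first claim is to show that $\Theta:\Int_{\g,\g^0}\to \bbT_W$ carries injectives to injectives. Granted this, the composite $\Gamma(\Coind_{\g^0}^{\g}(I)) = \Theta(\Gamma_{\g,\g^0}(\Coind_{\g^0}^{\g}(I)))$ will be injective in $\bbT_W$, as required.

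To establish that $\Theta$ preserves injectives, I would rely on the adjunction $\iota\dashv\Theta$ recorded in the text, where $\iota:\bbT_W\hookrightarrow \Int_{\g,\g^0}$ denotes the inclusion. The standard adjunction isomorphism
\[
\Hom_{\bbT_W}(-,\Theta(J))\cong \Hom_{\Int_{\g,\g^0}}(\iota(-),J)
\]
turns injectivity of $J$ in $\Int_{\g,\g^0}$ into injectivity of $\Theta(J)$ in $\bbT_W$, provided $\iota$ is exact. Exactness of $\iota$ is the one point that is not quite explicit in the surrounding text; it amounts to closure of $\bbT_W$ in $\Int_{\g,\g^0}$ under sub- and quotient modules, which in turn reduces to the routine observation that the l.a.c.\ for $\g$ is inherited by subquotients (if $\ft_n$ annihilates a representative $m$, it annihilates the image of $m$ in any quotient, and trivially any element of a submodule containing $m$). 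This is the only mildly technical step and is the main (though very minor) obstacle in the argument.

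For the ``in particular'' statement I would follow exactly the same two-step chain, but invoking the immediately preceding Corollary in place of the preceding Proposition. That Corollary yields $\Gamma_{\g,\g^0}(\Coind_{\g^0}^{\g}(\Gamma_{\g^0}(I^*)))$ as an injective object of $\Int_{\g,\g^0}$ for arbitrary $I\in\Int_{\g^0}$, and then applying $\Theta$ transports this injectivity to $\bbT_W$, giving the stated construction. Notice that this second claim is strictly stronger in utility than the first, since it produces a naturally defined injective in $\bbT_W$ attached to \emph{any} $I\in\Int_{\g^0}$, not only to the already-injective ones.
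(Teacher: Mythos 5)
Your argument is correct and is essentially the paper's own proof: the paper's one-line justification ``$\Theta$ is right adjoint to the inclusion $\bbT_W\subset\Int_{\g,\g^0}$'' is exactly your chain, combining the preceding Proposition (resp.\ the preceding Corollary for the ``in particular'' part) with the fact that a right adjoint to an exact inclusion preserves injectives. Your added remark on exactness of the inclusion (closure of $\bbT_W$ under subquotients inside $\Int_{\g,\g^0}$) is just a spelled-out detail of that same adjunction argument, not a different route.
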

\begin{proof}
It follows from the fact that  $\Theta$ is right adjoint to the respective inclusion functor $\bbT_W\subset \Int_{\g,\g^0}$.
\end{proof}

\begin{prop}\label{prop:injective_for_simple}
Each simple module $L^-_{\lambda,\mu}$ of $\bbT_W$ is isomorphic to a submodule of the injective module $\Gamma \left(\Coind_{\g^0}^{\g}(((V_{\lambda,\mu})_{*})^*)\right)$.
\end{prop}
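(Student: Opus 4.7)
The plan is to apply Frobenius reciprocity together with the tensor-field realization from Theorem~\ref{thm:simples_are_tensor_modules}. The adjunction between restriction to $\g^0$ and coinduction gives the natural isomorphism
\[
\Hom_\g\bigl(L^-_{\lambda,\mu},\Coind_{\g^0}^\g(((V_{\lambda,\mu})_*)^*)\bigr)\cong \Hom_{\g^0}\bigl(L^-_{\lambda,\mu}|_{\g^0},((V_{\lambda,\mu})_*)^*\bigr),
\]
so it suffices to exhibit a nonzero $\g^0$-homomorphism $\phi\colon L^-_{\lambda,\mu}\to ((V_{\lambda,\mu})_*)^*$. Since $L^-_{\lambda,\mu}$ is simple, the corresponding $\g$-homomorphism $\tilde\phi$ will be injective, and because $\Gamma$ is right adjoint to the inclusion $\bbT_W\hookrightarrow \g\Mod$, the image of $\tilde\phi$ automatically lies inside $\Gamma(\Coind_{\g^0}^\g(((V_{\lambda,\mu})_*)^*))$.

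To construct $\phi$, first use Theorem~\ref{thm:simples_are_tensor_modules} to embed $L^-_{\lambda,\mu}$ as a $\g$-submodule of $\cT(V_{\lambda,\mu})=\Lambda(\infty)\otimes V_{\lambda,\mu}$. Next, consider the evaluation-at-zero map $\mathrm{ev}\colon \cT(V_{\lambda,\mu})\to V_{\lambda,\mu}$, $f\otimes v\mapsto f(0)v$, which is nothing but the projection onto the degree-zero summand $\Lambda(\infty)^0\otimes V_{\lambda,\mu}$. This projection is $\g^0$-equivariant: for $E_{k,j}=\xi_k\partial_j\in\g^0$ the explicit action formula on $\cT$ reads $\xi_k\partial_j\cdot(f\otimes v)=(\xi_k\partial_j(f))\otimes v+f\otimes E_{k,j}v$; the first summand has vanishing constant term because $\xi_k\partial_j(f)=\xi_k\cdot\partial_j(f)$ lies in the augmentation ideal of $\Lambda(\infty)$, while the second evaluates at $0$ to $E_{k,j}\cdot \mathrm{ev}(f\otimes v)$. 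Composing $\mathrm{ev}$ with the canonical $\g^0$-equivariant inclusion $V_{\lambda,\mu}\hookrightarrow ((V_{\lambda,\mu})_*)^*$ induced by the natural nondegenerate pairing between $V_{\lambda,\mu}$ and its restricted dual $(V_{\lambda,\mu})_*$ then produces $\phi$.

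The nonvanishing of $\phi$ will follow from the identification made in the proof of Theorem~\ref{thm:simples_are_tensor_modules}: the $\fb(\prec)^{\min}$-highest weight vector generating $L^-_{\lambda,\mu}\subset \cT(V_{\lambda,\mu})$ was shown there to lie in $\Lambda(\infty)^0\otimes V_{\lambda,\mu}\cong V_{\lambda,\mu}$, so $\mathrm{ev}$ sends it to itself and the canonical embedding keeps it nonzero. The injectivity of $\Gamma(\Coind_{\g^0}^\g(((V_{\lambda,\mu})_*)^*))$ in $\bbT_W$ asserted in the statement is a direct consequence of Corollary~\ref{cor:constructing_inj}. The one technical step deserving care is the $\g^0$-equivariance check for $\mathrm{ev}$, for which the short calculation above is the complete argument; everything else is a direct assembly of results already available in the paper.
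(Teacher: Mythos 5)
Your argument is correct, and it reaches the conclusion by a slightly different route than the paper. The paper proceeds by composing explicit inclusions
\[
\Coind_{\g^0}^{\g}\bigl(((V_{\lambda,\mu})_{*})^*\bigr)\supset \Coind_{\g^\geq}^{\g}\bigl(((V_{\lambda,\mu})_{*})^*\bigr)\supset \cT\bigl( ((V_{\lambda,\mu})_{*})^*\bigr)\supset \cT(V_{\lambda,\mu})\supset L^-_{\lambda,\mu},
\]
using the previously constructed embedding $\varphi\colon \cT(X)\hookrightarrow \Coind_{\g^\geq}^{\g}(X)$ and functoriality of $\cT$ and $\Coind$, and then observes that $L^-_{\lambda,\mu}\in\bbT_W$ forces $L^-_{\lambda,\mu}\subset\Gamma(\Coind_{\g^0}^{\g}(((V_{\lambda,\mu})_{*})^*))$. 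You instead build the map by Frobenius reciprocity, using the $\g^0$-equivariant degree-zero projection of $\cT(V_{\lambda,\mu})$ composed with the canonical embedding $V_{\lambda,\mu}\hookrightarrow ((V_{\lambda,\mu})_*)^*$; this is essentially the adjoint form of the paper's chain (indeed $\varphi$ itself is evaluation at $0$), so both proofs hinge on Theorem~\ref{thm:simples_are_tensor_modules}. Your version trades the intermediate modules $\Coind_{\g^\geq}^{\g}$ and $\cT(((V_{\lambda,\mu})_*)^*)$ for two extra checks, both of which you carry out correctly: the $\g^0$-equivariance of evaluation at zero (immediate since $\g^0$ preserves the $\Z$-grading of $\cT$), and the nonvanishing, which uses the fact from the proof of Theorem~\ref{thm:simples_are_tensor_modules} that the $\fb(\prec)^{\min}$-highest weight vector generating $L^-_{\lambda,\mu}$ sits in $\Lambda(\infty)^0\otimes V_{\lambda,\mu}$. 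One small point: the injectivity of $\Gamma(\Coind_{\g^0}^{\g}(((V_{\lambda,\mu})_*)^*))$ does not follow from Corollary~\ref{cor:constructing_inj} alone; you also need to know that $((V_{\lambda,\mu})_*)^*$ is injective in $\Int_{\g^0}$ (it is in fact the injective hull of $V_{\lambda,\mu}$ there, by Corollary~6.7 of \cite{PS11}, which is how the paper justifies this), or else replace it by $\Gamma_{\g^0}(((V_{\lambda,\mu})_*)^*)$ and invoke the second part of that corollary.
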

\begin{proof} By Corollary 6.7 of \cite{PS11}, the module $((V_{\lambda,\mu})_{*})^*$ is an injective hull of $V_{\lambda,\mu}$ in $\Int_{\g^0}$. Moreover, we have the following chain of inclusions
    \[
\Coind_{\g^0}^{\g}(((V_{\lambda,\mu})_{*})^*)\supset \Coind_{\g^\geq}^{\g}(((V_{\lambda,\mu})_{*})^*)\supset \cT( ((V_{\lambda,\mu})_{*})^*)\supset \cT(V_{\lambda,\mu})\supset L^-_{\lambda,\mu}
    \]
where the latter inclusion follows from Theorem~\ref{thm:simples_are_tensor_modules}. Thus $L^-_{\lambda,\mu}\subset \Gamma \left( \Coind_{\g^0}^{\g}(((V_{\lambda,\mu})_{*})^*)\right)$.
\end{proof}

%%%%%%%%%%%%%%%%%%%%%%%%%%%%%%%%%
% References
%%%%%%%%%%%%%%%%%%%%%%%%%%%%%%%%%

%\bibliographystyle{alpha}
%\bibliography{global-weyl-modules-biblist}
%\bibliography{ref,outref,mathsci}

\end{document}